\journal{Journal of Computational and Applied Mathematics}
\begin{document}

\begin{frontmatter}
\title{Sparsest Piecewise-Linear Regression of One-Dimensional Data}

\author[1]{Thomas Debarre\corref{cor1}}
\cortext[cor1]{Corresponding author}
\ead{thomas.debarre@gmail.com}
\author[1]{Quentin Denoyelle}
\ead{quentin.denoyelle@epfl.ch}
\author[1]{Michael Unser}
\ead{michael.unser@epfl.ch}
\author[2]{Julien Fageot}
\ead{julien.fageot@epfl.ch}
\address[1]{EPFL, Biomedical Imaging Group, Lausanne, Switzerland}
\address[2]{EPFL, AudioVisual Communications Laboratory, Lausanne, Switzerland}

\begin{abstract}
We study the problem of one-dimensional regression of data points with total-variation (TV) regularization (in the sense of measures) on the second derivative, which is known to promote piecewise-linear solutions with few knots. While there are efficient algorithms for determining such adaptive splines, the difficulty with TV regularization is that the solution is generally non-unique, an aspect that is often ignored in practice.
In this paper, we present a systematic analysis that results in a complete description of the solution set with a clear distinction between the cases where the solution is unique and those, much more frequent, where it is not. For the latter scenario, we identify the sparsest solutions, i.e., those with the minimum number of knots, and we derive a formula to compute the minimum number of knots based solely on the data points. To achieve this, we first consider the problem of exact interpolation which leads to an easier theoretical analysis.
%We thoroughly describe the form of the solutions of the underlying constrained optimization problem. In particular, we show that in general, this problems admits uncountably many solutions, and we identify the sparsest piecewise-linear solutions, \ie those with the minimum number of knots.
Next, we relax the exact interpolation requirement to a regression setting, and we consider a penalized optimization problem with a strictly convex data-fidelity cost function. We show that the underlying penalized problem can be reformulated as a constrained problem, and thus that all our previous results still apply. Based on our theoretical analysis, we propose a simple and fast two-step algorithm, agnostic to uniqueness, to reach a sparsest solution of this penalized problem.
%Our work is motivated in part by recent results in the field of machine learning, more precisely for the study of popular ReLU neural networks.
%Indeed, it is well known that such networks produce an input-output relation that is a continuous piecewise-linear function, as does our interpolation algorithm.
\end{abstract}
\begin{keyword}
Inverse problems \sep Total-variation norm for measures \sep Sparsity \sep Splines 
\end{keyword}
\end{frontmatter}

% !TEX root = ../main.tex
%
\section{Introduction}
\label{sec:intro}

%Many machine learning problems consist in learning a function $f:\R^d \to \R$ that maps an input vector $\V{x} \in \R^d$ to an output value $f(\V x) = y$.
Regression problems consist in learning a function $f$ that best approximates some data $(x_m, y_m)_{m=1}^M$, where $M$ is the number of data points, in the sense that $f(x_m) \approx y_m$. This is typically achieved by parametrizing $f$ with a vector of parameters $\V{\theta}$, and minimizing some objective function with respect to $\V{\theta}$. The oldest and most basic form or regression is linear regression: $f$ is parametrized as a linear (or affine) function. Although this model has the advantage of being very simple, it is very limited due to the fact that many data distributions are poorly approximated by linear functions, as illustrated by the dotted line example in Figure~\ref{fig:regression_vs_NN}. The choice of parametrization $\V\theta$ is therefore crucial, as it must strike an appropriate balance between two conflicting desirable properties. Firstly, in order to be suitable for a variety of problems, the parametric model should be flexible enough to represent a large class of functions. In the field of machine learning, where regression is known as \emph{supervised learning}, this quest for universality is for instance highlighted by several universal approximation theorems for artificial neural networks \cite{cybenko1989approximation, hornik1991approximation,leshno1993multilayer}. Next, the model should be simple enough so that it generalizes well to input vectors $\V{x}$ that are outside of the training set. Indeed, a known pitfall of machine learning algorithms is overfitting, which happens when the model is unduly complex and fits too closely to the training data~\cite[Chapter 3]{mitchell1997machine}. This leads to poor generalization abilities for out-of-sample data. This pitfall is often dealt with by adding some regularization to the objective function, which tends to simplify the model. The overarching guiding principle to avoid overfitting is Occam's razor: the simplest model that explains the data well will generalize better and should thus be selected.

\begin{figure}[t]
\centering
\includegraphics[width=0.5\linewidth]{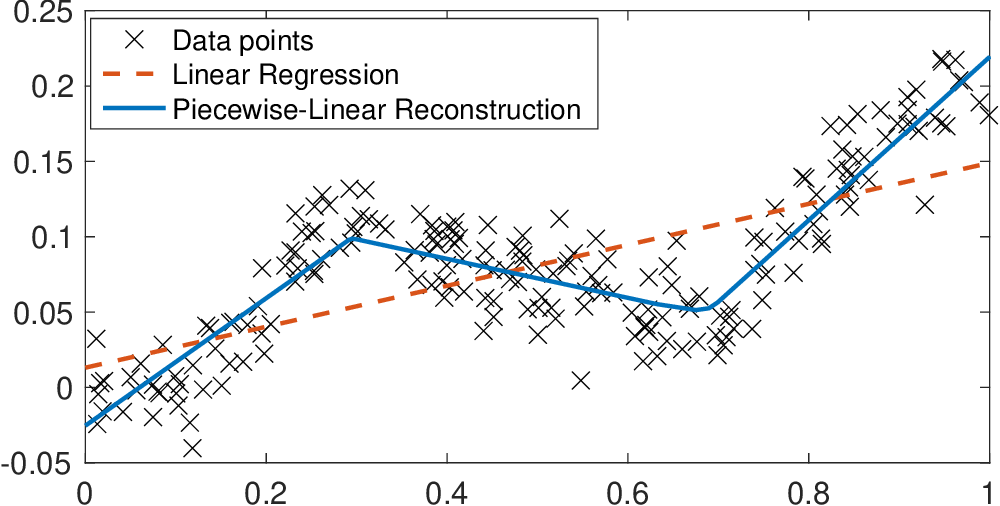}
\caption{Examples of reconstructions}
\label{fig:regression_vs_NN}
\end{figure}

\subsection{Problem Formulation}
In this paper, we study the regression (or supervised learning) problem in one dimension, \ie $f:\R \to \R$ and $x_m, y_m \in \R$. However, instead of parametrizing the reconstructed function, we formulate the learning problem as a regularized inverse problem in a continuous-domain framework. Inspired by their connection (that we discuss later on) to popular ReLU (rectified linear unit) neural networks, we focus on reconstructing piecewise-linear splines. Our metric for model simplicity is sparsity, \ie the number of spline knots.
%Although this sparsity cannot be equated to the number of parameters of a multi-layer ReLU network, it is the only one available to us, since our learned function lies in an infinite-dimensional space.
%Although this sparsity cannot be equated to the number of parameters of a multi-layer ReLU network, it is a natural metric.
For regularization purposes, we therefore use the total variation (TV) norm for measures $\Vert \cdot \Vert_\Spc{M}$, which is defined over the space of bounded Radon measures $\Spc{M}(\R)$. This norm is known to promote sparse solutions in the desired sense, as will be clarified in~\eqref{eq:formsolution}. We formulate the following optimization problem, which we refer to as the \emph{generalized Beurling LASSO}~\eqref{eq:noisyproblem}
\begin{align}\label{eq:noisyproblem}
    \argmin_f  \sum_{m=1}^M E(f(x_m), y_m) + \lambda \Vert\D^2 f \Vert_{\Spc{M}}, \tag{\gBLASSO}
\end{align}
where $E$ is a cost function that penalizes the fidelity of $f(x_m)$ to the data $y_m \in \R$ (\eg a quadratic loss $E(z, y) = \frac{1}{2} (z - y)^2$). We assume that the sampling locations are ordered, \ie $x_1 < \cdots < x_M$. The parameter $\lambda > 0$ balances the contribution of the data fidelity and the regularization, and $\D^2$ is the second-derivative operator. The terminology generalized Beurling LASSO comes from the Beurling LASSO (BLASSO) which is used in the Dirac recovery literature \cite{de2012exact}. Indeed, the~\eqref{eq:noisyproblem} problem is a generalization of the BLASSO due to the presence of a regularization operator $\D^2$, which is not present in the latter problem. It is known~\cite{Unser2017splines,gupta2018continuous,boyer2019representer} that the extreme points solutions to the~\eqref{eq:noisyproblem} are piecewise-linear splines of the form
\begin{align}
\label{eq:formsolution}
    \fopt(x) = b_0 + b_1 x + \sum_{k=1}^K a_k (x-\tau_k)_+,
\end{align}
where $x_+ = \max(0, x)$ is the ReLU, $b_0, b_1, a_k, \tau_k \in \R$, and the number of spline knots $K$ is bounded by $K \leq M-2$. This representer theorem has two important components:
\begin{itemize}
    \item the \eqref{eq:noisyproblem} has solutions of the prescribed form, \ie piecewise-linear splines. This stems from the choice of the regularization, \ie the TV norm of the second derivative;
    \item the sparsity is bounded by the number of training data by $K \leq M-2$. 
\end{itemize}
%Our continuous domain framework is inherently universal, since \eqref{eq:noisyproblem} is formulated over an infinite-dimensional space that thus contains a huge variety of functions.
In terms of model simplicity, the bound $K \leq (M-2)$ is typically uninformative in machine learning problems: in Figure \ref{fig:regression_vs_NN}, it yields $K \leq M-2 = 198$, which is clearly much higher than the desired sparsity. However, this bound does not take the effect of the regularization parameter $\lambda$ into account. Indeed, $\lambda \to 0$ will roughly lead to a learned function $f$ that interpolates all the data points, with typically close to $K = M-2$ knots. At the other extreme, the limit $\lambda \to +\infty$ leads to linear regression and thus sparsity $K=0$ due to the fact that linear functions are not penalized by the regularization. Therefore, the interesting case is the intermediate regime (as illustrated by the solid curve in Figure~\ref{fig:regression_vs_NN}), in which the overall trend is that the sparsity $K$ decreases as $\lambda$ increases. Hence, $\lambda$ controls the universality \vs simplicity trade-off.

\subsection{Summary of Contributions and Outline}
The above purely qualitative observation is far from telling the whole story. In particular, it does not prescribe how $\lambda$ should be chosen in practice. We attempt to overcome this impediment by giving a full description of the solution set of the~\eqref{eq:noisyproblem}.
% To this end, in this paper, we answer the following theoretical questions:
% \begin{itemize}
%     \item Is there a unique solution to problem \eqref{eq:noisyproblem}?
%     \item If not, what is the form of its solution set?
%     \item What is the sparsest solution?
% \end{itemize}
The basis of our analysis is the classical observation (see for instance \cite[Theorem 5]{gupta2018continuous}) that when $E$ is strictly convex, there exists a unique vector $\V{y}_\lambda = (y_{\lambda, 1}, \ldots, y_{\lambda, M}) \in \R^M$ such that the \eqref{eq:noisyproblem} is equivalent to the constrained problem
\begin{align} \label{eq:noiselessproblem}
    \argmin_{\substack{f: f(x_m) = y_{\lambda,m}, \\ m\in\{1,\ldots,M\}}} \Vert\D^2 f \Vert_{\Spc{M}} \tag{\gBPC},
\end{align}
which we refer to as the \emph{generalized basis pursuit in the continuum}~\eqref{eq:noiselessproblem}\footnote{A similar terminology, the ``continuous basis pursuit'', is used in a different context in \cite{ekanadham2011recovery, duval2017sparseII}.}. Our terminology is inspired by the (discrete) basis pursuit (BP)~\cite{chen2001atomic}, which is also a constrained problem; as for the \eqref{eq:noisyproblem}, the ``generalization'' is due to the presence of a regularization operator $\Op D^2$ which is absent in the BP. We therefore carry out our theoretical analysis on the more straightforward~\eqref{eq:noiselessproblem} problem, and we attest that these results apply to the~\eqref{eq:noisyproblem} as well, provided that $\V{y}_\lambda$ is known. For this analysis, we use mathematical tools based on duality theory, and we exploit the very specific form of the so-called dual certificate for our regularization operator $\Op{D}^2$. We describe in a systematic way the form of the solution set and identify the set of sparsest solutions. The fact the optimization problems with sparsity-promoting regularization sometimes have multiple solutions is often sidestepped in the literature by identifying specific cases of uniqueness \cite{candes2014towards,Duval2015exact, Fernandez-Granda2016Super}.
%Although this is known (despite being rarely studied) in discrete settings \cite{Tibshirani2013lasso}, to the best of our knowledge, our \emph{complete} description of the solution set is a first in the TV-based inverse-problem literature. 
When it is not, existing works typically provide the form of certain solutions \cite{fisher1975spline, Unser2017splines}, but they do not characterize cases of uniqueness nor do they give a \emph{complete} description of the solution set as we do here. % A notable exception is \cite{boyer2019representer}, whose setting is much more general at the expense of a considerably less complete description of the solution set. 
Concerning our specific problem, it is known that the function that simply connects the points $(x_1, y_{0, 1}), \ldots, (x_M, y_{0, M})$ is always a solution to the~\eqref{eq:noiselessproblem} (see~\cite[Theorem 1]{koenker1994quantile} and~\cite[Proposition 7]{mammen1997locally}). We refer to it as the \emph{canonical solution}. Building on this result, our contributions on the theoretical and algorithmic sides concerning the~\eqref{eq:noisyproblem} are summarized below.
\begin{enumerate}
\item { \textbf{Theory}

Our main theoretical contributions are the following.
\begin{itemize}
    %\item similarly to~\eqref{eq:noiselessproblem}, the~\eqref{eq:noisyproblem} also has a canonical solution that connects the points $(x_m, y_{0, m})$ for $m=1, \hdots, M$, where the vector $\V{y}_0 \in \R^M$ depends on the regularization parameter $\lambda$; 
    \item In Section~\ref{sec:noiseless}, we fully describe the solution set of the~\eqref{eq:noiselessproblem} by specifying the intervals in which all solutions follow the canonical solution, and those in which they do not (Theorem~\ref{theo:sol_set}). This allows us to characterize the cases where the~\eqref{eq:noiselessproblem} admits a unique solution. When they differ, we give a geometrical description of the set in which the graph of all solutions lies in Theorem~\ref{thm:limitdomain}.
    \item When there are multiple solutions, the canonical solution can be made sparser in certain regions, which is the topic of Section~\ref{sec:sparsestsolutions}. More precisely, in Theorem~\ref{thm:sparsest_sol}, we express the minimum achievable sparsity of a solution to the~\eqref{eq:noiselessproblem} as a simple function of $\x \eqdef (x_1, \ldots, x_M)$ and $\V{y}_0$, which we denote by $\sparsity{\x}{\V{y}_0}$. Concerning the solution set, we fully describe the set of sparsest solutions of the~\eqref{eq:noiselessproblem}. In particular, we characterize the cases of uniqueness, and provide a description of the sparsest solutions together with the number of degrees of freedom $n_{\mathrm{free}} (\x, \obs)$, that we characterize and show to be finite. 
    \item In Section~\ref{sec:solutionnoisy}, we extend the results of the first two items to the~\eqref{eq:noisyproblem}.
    %that is fully determined by the input data $\V{y} =(y_1,\cdots , y_M) \in \R^M$, $\lambda$, and the choice of $E$.
    This is a consequence of the aforementioned equivalence between the~\eqref{eq:noisyproblem} and the~\eqref{eq:noiselessproblem} problems, given in Proposition~\ref{prop:penalized_to_constrained}. We also specify the limit value $\lambda_{\text{max}}$, for which any $\lambda \geq \lambda_{\text{max}}$ amounts to linear regression in Proposition~\ref{prop:linear_regression}.
%    , in particular by specifying the cases of uniqueness, 
%    and giving the number of degrees of freedom within this set.
\end{itemize} }
    \item {\textbf{Algorithm}
    
    These theoretical findings warrant our simple and fast algorithm, presented in Section~\ref{sec:algonoisy}, for reaching (one of) the sparsest solution(s) to the~\eqref{eq:noisyproblem}. The algorithm, which is agnostic to uniqueness, is divided in two parts: first, we compute the $\V{y}_\lambda$ vector for the~\eqref{eq:noiselessproblem} problem by solving a standard discrete $\ell_1$-regularized problem. Next, we find a sparsest solution to the~\eqref{eq:noisyproblem} (with sparsity $\sparsity{\x}{\V{y}_\lambda}$) by optimally sparsifying its canonical solution in some prescribed regions that are determined by our theoretical results. This sparsification step is detailed in Algorithm \ref{alg:algo_constrained} and has complexity $\mathcal{O}(M)$.

    This complete algorithm provides a simple and fast way for the user to judiciously choose $\lambda$ by evaluating the data fidelity loss $\sum_{m=1}^M E(f(x_m), y_m)$ \vs the optimal sparsity $\sparsity{\x}{\V{y}_0}$ --- which depends on $\lambda$ --- as a proxy for the universality \vs simplicity trade-off. We illustrate this in our experiments in Section~\ref{sec:experiments}. The value of $\lambda$ may vary between $\lambda \to 0$ (which at the limit amounts to the~\eqref{eq:noiselessproblem} problem) and an upper bound $\lambda = \lambda_{\text{max}}$ mentioned above. Note that existing algorithms that solve the~\eqref{eq:noisyproblem} such as that introduced in \cite{mammen1997locally} are a lot more complex and computationally expensive. Moreover, to the best of our knowledge, no existing algorithm has the guarantee of reaching a sparsest solution of the~\eqref{eq:noiselessproblem} or the~\eqref{eq:noisyproblem}.
    }
\end{enumerate}

%On the practical side, we propose a simple and fast algorithm --- with complexity $\mathcal{O}(M)$ ---  for reaching (one of) the sparsest solution(s) to~\eqref{eq:noiselessproblem}. We then take advantage of the equivalence between~\eqref{eq:noisyproblem} and~\eqref{eq:noiselessproblem} to design an algorithm that computes a sparsest solution to~\eqref{eq:noisyproblem}.

%The crucial feature of our analysis and algorithm is that they apply not only to~\eqref{eq:noiselessproblem} but also to~\eqref{eq:noisyproblem}, which is an extremely interesting problem from a machine learning perspective. This work can thus be seen as an attempt to understand neural networks viewed as the solutions to variational problems. 

%The one-dimensional is now fully understood, which is a important first step. How to extend this framework to higher dimensions is an open and exciting, albeit daunting, question.
%even the seemingly simple fact of formulating a variational problem is far from being evident, let alone solving it.

\subsection{Related Works}

\paragraph{Discrete $\ell_1$ Optimization}
Putting aside for now the regularization operator $\Op{D}^2$, the optimization problems \eqref{eq:noiselessproblem} and~\eqref{eq:noisyproblem} are the continuous-domain counterparts of the basis pursuit~\cite{chen2001atomic} and the LASSO~\cite{tibshirani1996regression}, which were introduced in the late 90's. These problems are the precursors of the type of $\ell_1$-recovery techniques used in compressed sensing~\cite{Donoho2006,Candes2006sparse,eldar2012compressed,Foucart2013mathematical,Unser2016representer}. These approaches provide  solutions with only few nonzero coefficients. They are at the cornerstone of sparse statistical learning~\cite{hastie2015statistical} and sparse signal processing~\cite{rish2014sparse}.%, and have many applications for example in geophysics~\cite{Claerbout1973Robust,Levy1981Reconstruction,Santosa1986Linear}. 
Theoretical recovery guarantees have been proved, see for example~\cite{Donoho1992Superresolution}; however it is worth noting that in their initial formulations, these methods are inherently discrete and therefore adapted to recover finite-dimensional physical quantities.

\paragraph{Reconstruction in Infinite-Dimensional Spaces}
In our context, we aim at learning a continuous-domain function $f : \mathbb{R} \rightarrow \R$ from finite-dimensional data (the values $y_m =f(x_m)$ for $m\in\{1,\ldots,M\}$). It is therefore natural to formulate the optimization task in infinite dimension to perform the reconstruction. 
The problem is then inherently ill-posed: not only is the system undetermined, as it is also the case in compressed sensing, but we have infinitely many degrees of freedom with finitely many constraints for the reconstruction. Kernel methods based on quadratic regularization are an elegant way of removing this ill-posedness~\cite{Scholkopf2001generalized}, with the effect of restricting the approximation to a finite-dimensional subset of a Hilbert space~\cite{Wahba1990spline,Berlinet2011reproducing,badoual2018periodic}. The challenge is then to choose this Hilbert space adequately. These approaches are fruitful, but they still ultimately revert to the finite-dimensional setting. Taking inspiration from $\ell_1$-based methods for sparse vectors, new approaches have been proposed that go beyond the Hilbert space setting, such as~\cite{Adcock2015generalized,adcock2017breaking,bhandari2018sampling,Bodmann2018compressed}.

\paragraph{Reconstruction in Measure Spaces}
A fertile continuous-domain problem to which discrete $\ell_1$ methods were recently adapted is sparse spikes deconvolution~\cite{de2012exact,bhaskar2013atomic,candes2014towards,Bredies2013inverse,Duval2015exact}. The aim is to recover sums of Dirac masses (point sources signals) over a continuous domain by extending the $\ell_1$ regularization to a gridless setup thanks to the total variation norm $\mnorm{\cdot}$, which is defined over the space of Radon measures $\Spc{M}(\R)$. The underlying optimization problems, either formulated in a constrained form in the noiseless case \cite{candes2014towards} or in a penalized form known as the BLASSO~\cite{de2012exact} in the presence of noise,
are thus solved over a nonreflexive Banach space. The role of the total variation norm in variational methods has a rich history~\cite{Zuhovickii1962,krein1977markov} (see \cite[Section 1]{boyer2019representer} for additional references). From a theoretical standpoint, many reconstruction guarantees are proved, such as exact recovery of discrete measures (sums of Dirac masses) in the noiseless case~\cite{candes2014towards,Fernandez-Granda2016Super}, robustness to noise~\cite{Bredies2013inverse,Candes2013Super,Azais2015Spike,Bhaskar2014minimax}, support recovery~\cite{duval2017sparseII,Duval2015exact,duval2017sparseI,Poon2019Support} and super-resolution for positive discrete measures~\cite{de2012exact,Denoyelle2017support,Poon2019Multidimensional,Schiebinger2018Superresolution,Duval2019characterization,garcia2020approximate,chi2020harnessing}. 

From a numerical standpoint, there exist several different strategies to solve these problems. A first one is based on spatial discretization which leads back to the LASSO and algorithms such as FISTA~\cite{Beck2009fast}. There are also greedy algorithms such as continuous-domain Orthogonal Matching Pursuit (OMP)~\cite{Elvira2019Omp}. In special setups (typically Fourier measurements), it is possible to reformulate the optimization problems as semidefinite programs~\cite{candes2014towards,de2016exact,Catala2017low}. Finally, recent developments based on the Frank-Wolfe (FW) algorithm~\cite{Frank1956algorithm} solve the BLASSO directly over the space of Radon measures~\cite{Bredies2013inverse}. These FW-based methods improve on the traditional FW algorithm due to the possibility of moving the spikes in the continuous domain to further decrease the objective function~\cite{boyd2017alternating,denoyelle2019sliding,courbot2019sparse,flinth2019linear}.

\paragraph{From Dirac Masses Recovery to Spline Reconstruction}
More generally, Dirac masses recovery is part of a trend that promotes continuous-domain formalisms for signal reconstruction. By adding a differential operator to the total variation regularization, one allows for more diverse reconstructions than the recovery of sums of Dirac impulses, while keeping the sparsity-promoting effect of the total variation norm.
Even predating the era of ReLU networks, the~\eqref{eq:noisyproblem} and, to a greater degree, the~\eqref{eq:noiselessproblem}---or variations thereof---have been of keen interest to the signal processing and statistics communities.
Adding a differential operator leads to spline reconstructions, a result that can be traced back to~\cite{fisher1975spline,boor1976best} in the 70's. In \cite{pinkus1988smoothest}, Pinkus proved that the canonical solution---that simply connects the data points---is the unique solution to the~\eqref{eq:noiselessproblem} in some special cases, a result that we recover in our analysis. Later, Koenker \etal ~\cite[Theorem 1]{koenker1994quantile} and Mammen and Van de Geer~\cite[Proposition 7]{mammen1997locally} proved that the canonical solution is indeed a solution to the~\eqref{eq:noiselessproblem}. These works also propose algorithms to solve the~\eqref{eq:noisyproblem} for any value of $\lambda$. However, contrary to this paper, none of the aforementioned works describe the full solution set of the~\eqref{eq:noiselessproblem}, nor identifies its sparsest solutions. There has been a promising new surge of very recent works on related problems, both on the theoretical and the algorithmic sides~\cite{unser2018representer,boyer2019representer,duval2019epigraphical,flinth2019exact,bredies2019sparsity,simeoni2020functional,simeoni2021functional,debarre2019hybrid}. Several very general theories, that incorporate the~\eqref{eq:noisyproblem} and the~\eqref{eq:noiselessproblem}, and that deal with optimization in Banach spaces with various differential regularization operators, have also been recently developed~\cite{boyer2019representer,bredies2019sparsity,fageot2020tv,Unser2019native}.

\paragraph{ReLU Networks, Piecewise-Linear Splines, and the~\eqref{eq:noisyproblem}}
%Perhaps the most basic and oldest form of supervised learning is linear regression: in dimension one ($d=1$), $f$ is parametrized as a linear function by $f(x) = \theta_1 + \theta_2 x$. Although this model certainly follows Occam's razor, it has weak universality properties: many functions are far from being linear.
A modern approach to supervised learning is neural networks, which in recent years have become the gold standard for an impressive number of applications~\cite{goodfellow2016deep}. Many recent papers have highlighted the property that today's state-of-the-art convolutional neural networks (CNNs) with rectified linear unit (ReLU) activations specify an input-output relation $f: \R^d \to \R$, where $d$ is the number of dimensions, that is continuous and piecewise-linear (CPWL) \cite{pascanu2014number, montufar2014number, balestriero2018mad}. This result stems from the fact that the ReLU nonlinearity is itself a CPWL function, as well as, for instance, the widespread max-pooling operation. In fact, there are indications that using more general piecewise-linear splines as activation functions could be more effective than restricting to the ReLU or leaky ReLU \cite{agostinelli2015learning, unser2018representer,aziznejad2020deep}. In the one-dimensional case $d=1$, it follows that the learned function of a ReLU network is a piecewise-linear spline~\cite{daubechies2019nonlinear}, just like the solutions to the~\eqref{eq:noisyproblem} given by \eqref{eq:formsolution}. The trade-off between universality and Occam's razor is then determined by the network size and architecture.
%An illustration is given in Figure \ref{fig:regression_vs_NN}, which compares linear regression with a ReLU network-like learned function. It is clear that the input-output relation in the training data is too complex for linear regression to succeed, whereas a piecewise-linear spline with few knots is able to model this relation adequately.
Many recent papers in the literature have investigated this connection between ReLU networks and piecewise-linear splines \cite{poggio2015notes, boelcskei2019optimal}, including universality properties~\cite{daubechies2019nonlinear, yarotsky2017error, petersen2018optimal}.
We also mention \cite{gribonval2019approximation}, which considers more general spline activation functions.

Moreover, several works have specifically underscored the relevance of the~\eqref{eq:noisyproblem} --- or related problems~\cite{dios2020sparsity} --- in machine learning by showing that it is equivalent to the training of a one-dimensional ReLU network with standard weight decay \cite{savarese2019how, parhi2020role}. Therefore, although the current trend of overparametrizing neural networks is somewhat antagonistic to our paradigm of sparsity, our full description of the solution set of the~\eqref{eq:noisyproblem} (including its non-sparse solutions) could be relevant to the neural network community. Others recent works have designed multidimensional ($d>1$) equivalents of the regularization term $\Vert\D^2 f \Vert_{\Spc{M}}$ and derive similar connections to neural networks \cite{parhi2021banach, ongie2020function}.

\section{Mathematical Preliminaries}
\label{sec:math_bckgrnd}
    The task of recovering a continuous-domain function from finitely many samples is obviously ill-posed; this issue is commonly addressed by adding a regularization term. As a regularization norm, we consider  $\Vert \cdot \Vert_\Spc{M}$, which is the continuous-domain counterpart of the $\ell_1$-norm, and is known to promote sparse solutions~\cite{Foucart2013mathematical}.
    Some of the results of this section (in Sections~\ref{sec:BVspace} and~\ref{sec:BVRT}) are not new, as they can be seen as a special case of the general framework developed in previous works~\cite{Unser2017splines,gupta2018continuous,Unser2019native} to the case of the second-derivative operator $\mathrm{L} = \mathrm{D}^2$. 
    Nevertheless, we provide a self-contained treatment, for the benefit of readers who are unfamiliar with the general theory.

% \texorpdfstring{tex}{pdfbookmark} removes the warning about the maths symbol in the title that will not show up in in the PDF bookmarks.
\subsection[\texorpdfstring{tex}{pdfbookmark}]{The Measure Space $\Radon$}

    We denote by $\Radon$, the space of bounded Radon measures on $\RR$. It is a nonreflexive Banach space and is defined as the topological dual of the space $\Co$ of continuous functions that vanish at $\pm\infty$ endowed with the supremum norm $\normi{\cdot}$. 
    The duality product between a measure $w \in \Radon$ and a function $f \in \Co$ is denoted by $\langle w, f \rangle \eqdef \int_{\R} f \mathrm{d} w$.
    The norm on $\Radon$ is called the total-variation norm and is given by
    \begin{equation}
        \label{eq:Mnorm}
        \forall w\in\Radon, \quad \mnorm{w} \eqdef \sup_{f \in \Co, \ \lVert f \rVert_{\infty} \leq 1} \langle w, f \rangle. 
    \end{equation}
    Moreover, we have the continuous embeddings 
    \begin{equation}
    \Sch \subseteq \Radon \subseteq \Schp,    
    \end{equation} 
    where $\Sch$ is the Schwartz space of smooth and rapidly decaying functions and $\Schp$ is its topological dual, the space of tempered distributions~\cite{Schwartz1966distributions}.
    We observe that we can replace $\Co$ by $\Sch$ in~\eqref{eq:Mnorm}, by invoking the denseness of $\Sch$ in $\Co$, and then characterize the bounded Radon measures among $\Schp$ as
    \begin{equation}
        \Radon = \{ w \in \Schp: \sup_{f \in \Sch, \ \lVert f \rVert_{\infty} \leq 1} \langle w, f \rangle < \infty \}.
    \end{equation}

%%
% \texorpdfstring{tex}{pdfbookmark} removes the warning about the maths symbol in the title that will not show up in in the PDF bookmarks.
\subsection[\texorpdfstring{tex}{pdfbookmark}]{The Native Space $\BV$}
\label{sec:BVspace}
    
   Motivated by the form of the regularization in the~\eqref{eq:noiselessproblem} and the~\eqref{eq:noisyproblem}, we introduce the space over which we shall optimize both problems. It is defined as
    \begin{equation}
        \label{eq:BV2}
        \BV \eqdef \{ f \in \mathcal{S}'(\R): \D^2 f \in \Radon\},
    \end{equation}
    with $\D^2 : \Schp \rightarrow \Schp$ the second-derivative operator.
    The space $\BV$ has been considered and studied in~\cite[Section 2.2]{unser2018representer}. It is the second-order generalization of the well-known space of functions with bounded variation. 
    For the sake of completeness, a detailed presentation of the mathematical properties of $\BV$ is provided in~\ref{app:1}.
    For now, it is important to remember that $\BV$ is a Banach space equipped with the norm
         \begin{equation} \label{eq:normBVfirst}
        \lVert f \rVert_{\mathrm{BV}^{(2)}} \eqdef \lVert \mathrm{D}^2 f \rVert_{\Radon} + \sqrt{f(0)^2 + (f(1) - f(0))^2}.
    \end{equation}
    Moreover, any function $f \in \BV$ is continuous and such that $f(x) = \mathcal{O}(x)$ at infinity (see Proposition~\ref{prop:BV2} in~\ref{app:1}).
    
    For any $w \in \Radon$, we denote by $\Itwo \{ w\}$ the unique function $f \in \BV$ such that $\D^2 f = w$ and $f(0) = f(1) = 0$, 
    according to the last point of Proposition~\ref{prop:BV2}. 
    Then, $\Itwo$ is a continuous operator from $\Radon$ to $\BV$, whose main properties are summarized in Proposition~\ref{prop:Itwo} in~\ref{app:1}. Its effect is to doubly integrate the measure on which it operates\footnote{The notation $\Itwo$ has two justifications. First, it recalls that this operator is a right-inverse of the second derivative $\mathrm{D}^2$. However, the index 0 indicates that $\Itwo$ is \emph{not} a left-inverse, as revealed by  Proposition~\ref{prop:Itwo} in~\ref{app:1}.}.
   Moreover,  any $f \in \BV$ can be uniquely decomposed as 
        \begin{align} \label{eq:fdecompose}
            \forall x \in \R, \quad f(x) = \Itwo \{w\} (x) + \beta_0 + \beta_1 x,
        \end{align}
        where $w\in \Radon$ and $\beta_0,\beta_1 \in \R$ satisfy
        \begin{align}
            w = \D^2 f, \quad \beta_0 = f(0), \qandq \beta_1 = f(1) - f(0) .
        \end{align}
        We call the measure $w$ the \emph{innovation} of $f$. The key elements of $\BV$ we are interested in are piecewise-linear splines, which are defined as follows.
        \begin{definition}[Piecewise-Linear Spline]
        \label{def:splines}
        A piecewise-linear spline is a function $f\in \BV$ whose innovation $w = \D^2 f \in \Radon$ is a weighted sum of Dirac masses $w = \sum_{k=1}^K a_k \delta(\cdot - \tau_k)$, where $K \in \N$ is the number of knots (\ie singularities), called the \emph{sparsity} of the spline and $a_k, \tau_k \in \R$.
        \end{definition}
        
        It follows from Definition~\ref{def:splines} that a piecewise-linear spline $f$ can equivalently be written as
        \begin{align}
        \label{eq:splines_def}
            f(x) = b_0 + b_1 x + \sum_{k=1}^K a_k (x - \tau_k)_+,
        \end{align}
        where $b_0, b_1 \in \R$. Note that this representation is different from that of~\eqref{eq:fdecompose} (in general, $(\beta_0, \beta_1) \neq (b_0, b_1)$); however we favor the representation \eqref{eq:splines_def} for splines due to its simplicity.

% \texorpdfstring{tex}{pdfbookmark} removes the warning about the maths symbol in the title that will not show up in in the PDF bookmarks.
\subsection[\texorpdfstring{tex}{pdfbookmark}]{Representer Theorem for $\BV$}
    \label{sec:BVRT}
    
    The native space $\BV$ allows us to precisely define the optimization problems we are interested in. Indeed, it is the largest space for which the regularization  $\lVert \D^2 f \rVert_{\Spc{M}}$ is well-defined and finite. The following result is a special case of a more general theory, which is now well established.
    %Following the seminal work of Fisher and Jerome~\cite{fisher1975spline}, it has been been proved first in~\cite{Unser2017splines,gupta2018continuous}, and has been recently revisited and/or extended by several authors~\cite{boyer2019representer,flinth2019exact,bredies2019sparsity}. 
    
    \begin{theorem}[Representer Theorem for $\BV$]
    \label{theo:RTweneed}
        Let $\x=(x_1,\ldots,x_M) \in \R^M$ be a collection of distinct $M \geq 2$ ordered sampling locations and $\obs \in \R^M$. 
       We consider the set of solutions
        \begin{equation}\label{eq:noiseless}
      \mathcal{V}_0 \eqdef \underset{\substack{f \in \BV \\ f(x_m) = y_{0,m}, \ m=1, \ldots, M}}{\arg \min} \lVert \Op{D}^2 f \rVert_{\mathcal{M}}. \tag{\gBPC}
    \end{equation}
        Moreover, we  fix $\lambda > 0$ and $\V{y} \in \R^M$, together with a cost function $E : \R \times \R \rightarrow \R^+$ such that $E(\cdot, y)$ is strictly convex, coercive, and differentiable for any $y \in \R$ and $\lambda > 0$.
        We also consider the set of solutions
        \begin{align}\label{eq:noisy}
     \mathcal{V}_\lambda \eqdef \argmin_{f \in \BV}  \sum_{m=1}^M E(f(x_m), y_m) + \lambda \Vert\D^2 f \Vert_{\Spc{M}}. \tag{\gBLASSO}
        \end{align}
    Then, for any $\lambda \geq 0$ (including $0$), $\mathcal{V}_\lambda$ is nonemtpy, convex, and weak-* compact in $\BV$, and is the weak-* closure of the convex hull of its extreme points. The latter are all piecewise-linear splines of the form 
       \begin{equation}
           \label{eq:extremepoints}
           f_{\mathrm{extreme}} (x) = b_0 + b_1 x + \sum_{k=1}^{K} a_k (x - \tau_k)_+,
       \end{equation}
       where $b_0, b_1 \in \R$, the weights $a_k$ are nonzero, the knots locations $\tau_k \in \R$ are distinct, and $K \leq M-2$.
    \end{theorem}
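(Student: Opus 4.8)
\emph{Overall approach.} The plan is to obtain Theorem~\ref{theo:RTweneed} as an instance of the abstract representer theorems of \cite{Unser2017splines,gupta2018continuous,boyer2019representer}, specialized to $\mathrm{L}=\D^2$, whose null space $\mathcal{N}_{\D^2}=\mathrm{span}\{1,x\}$ has dimension $2$; the number $M-2$ will come out as ``number of constraints minus $\dim\mathcal{N}_{\D^2}$''. Before invoking that machinery I would record the two structural facts that make it run on $\BV$: (i) via the decomposition \eqref{eq:fdecompose}, the map $f\mapsto(\D^2 f,\,f(0),\,f(1)-f(0))$ is an isometric isomorphism of $\BV$ onto the dual Banach space $\Radon\times\R^2$ (predual $\Co\times\R^2$), so $\BV$ carries a well-defined weak-* topology; and (ii) each sampling functional $f\mapsto f(x_m)$ is weak-* continuous on $\BV$, since $f(x_m)=\langle \D^2 f,\,g_{x_m}\rangle+f(0)+(f(1)-f(0))\,x_m$ for a compactly supported continuous kernel $g_{x_m}\in\Co$. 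Both are established in Propositions~\ref{prop:BV2} and~\ref{prop:Itwo} in Appendix~\ref{app:1}.

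\emph{Nonemptiness, convexity, weak-* compactness.} For \eqref{eq:noiseless} the feasible set is nonempty --- it contains the canonical connect-the-dots spline, which lies in $\BV$ with finite $\mnorm{\D^2\cdot}$ --- affine, and weak-* closed; for \eqref{eq:noisy}, coercivity of $E(\cdot,y)$ forces the sublevel sets of the objective to be bounded in $\BV$. In either case a minimizing sequence is bounded, hence by Banach--Alaoglu it has a weak-* cluster point $f^\star$; weak-* continuity of the $\delta_{x_m}$, weak-* lower semicontinuity of $\mnorm{\cdot}$, and convexity and continuity of $E$ then give $f^\star\in\mathcal{V}_\lambda$, so $\mathcal{V}_\lambda\neq\emptyset$. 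Convexity of $\mathcal{V}_\lambda$ is immediate (convex objective, affine constraints). Finally $\mathcal{V}_\lambda$ is bounded and weak-* closed, being a sublevel set of a weak-* l.s.c.\ function intersected with a weak-* closed affine subspace; hence it is weak-* compact, and by Krein--Milman applied in the dual space $\BV$ it equals the weak-* closure of the convex hull of its extreme points.

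\emph{Form of the extreme points.} It remains to show that every $f^\star\in\mathrm{ext}(\mathcal{V}_\lambda)$ has the form \eqref{eq:extremepoints} with $K\le M-2$. First I would reduce \eqref{eq:noisy} to a constrained problem of type \eqref{eq:noiseless}: for $\lambda=0$ this is tautological, and for $\lambda>0$ strict convexity of $E(\cdot,y)$ forces any two minimizers to agree at all the $x_m$ and to share the same value $\rho$ of $\mnorm{\D^2\cdot}$, so there is $\V{y}_\lambda\in\R^M$ with $\mathcal{V}_\lambda=\{f\in\BV:\ f(x_m)=y_{\lambda,m},\ \mnorm{\D^2 f}\le\rho\}$, $\rho$ being the optimal value (this is \cite[Theorem 5]{gupta2018continuous}; see also Proposition~\ref{prop:penalized_to_constrained}). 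Writing $f=\Itwo\{w\}+\alpha+\beta x$ as in \eqref{eq:fdecompose}, the $M$ interpolation constraints read $\langle w,g_{x_m}\rangle+\alpha+\beta x_m=y_{\lambda,m}$; since $M\ge2$ and the $x_m$ are distinct, two of them determine $(\alpha,\beta)$ as affine functions of $w$, and substituting back leaves exactly $M-2$ linear constraints $\langle w,\varphi_j\rangle=c_j$ with $\varphi_j\in\Co$. Thus $f^\star=\Itwo\{w^\star\}+\alpha^\star+\beta^\star x$ with $w^\star$ an extreme point of $\{w\in\Radon:\ \langle w,\varphi_j\rangle=c_j,\ j=1,\ldots,M-2,\ \mnorm{w}\le\rho\}$, where $\rho$ is the minimal norm compatible with these constraints. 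By the classical description of the extreme points of the $\Radon$-ball intersected with finitely many hyperplanes, taken at that minimal level (Fisher--Jerome / Dubins; see \cite{fisher1975spline,boyer2019representer}), $w^\star=\sum_{k=1}^{K}a_k\,\delta(\cdot-\tau_k)$ with the $\tau_k$ distinct, the $a_k$ nonzero, and $K\le M-2$. By Definition~\ref{def:splines} and \eqref{eq:splines_def}, $f^\star$ is then a piecewise-linear spline of the stated form \eqref{eq:extremepoints}.

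\emph{Main difficulty.} I expect the crux to be the last paragraph: quoting the Fisher--Jerome extreme-point lemma in the correct form for the nonreflexive space $\Radon$, and carrying out the bookkeeping that turns the $M$ interpolation constraints into exactly $M-2$ effective constraints on the innovation $w$ --- \ie correctly absorbing the two-dimensional null space $\mathcal{N}_{\D^2}=\mathrm{span}\{1,x\}$ --- while checking that the reduced functionals $\varphi_j$ still lie in $\Co$, so that the weak-* arguments (lower semicontinuity, compactness, extreme-point structure) genuinely apply. The existence part also rests on the weak-* continuity of the evaluation functionals on $\BV$, which is routine but not entirely trivial and is handled in Appendix~\ref{app:1}.
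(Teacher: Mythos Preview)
Your proposal is correct and follows exactly the route the paper takes: invoke the abstract representer theorems of \cite{Unser2017splines,gupta2018continuous,boyer2019representer} (ultimately resting on the Fisher--Jerome extreme-point lemma \cite{fisher1975spline}) specialized to $\mathrm{L}=\D^2$, after checking that point evaluations are weak-* continuous on $\BV$ (which the paper attributes to \cite[Theorem~1]{unser2018representer}) and that the null space $\mathrm{span}\{1,x\}$ accounts for the $M-2$ bound. In fact the paper does not give a self-contained proof of Theorem~\ref{theo:RTweneed} at all---it simply cites those references and remarks that the hypotheses are met---so your sketch is already more detailed than what appears there.
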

   
    Following the seminal work of Fisher and Jerome~\cite{fisher1975spline}, this result was proved in~\cite[Theorem 2]{Unser2017splines} for $\lambda = 0$ and for a general spline-admissible operator $\mathrm{L}$ in the regularization term $\lVert \mathrm{L} \cdot \rVert_{\Radon}$.
    The case $\lambda > 0$ is proved in \cite[Theorem 4]{gupta2018continuous} for a general cost function $E$, by reducing the analysis to the optimization problem~\eqref{eq:noiseless} (as we shall do in Section~\ref{sec:noisy}). Theorem~\ref{theo:RTweneed} is then a particular case of these two works for the regularization operator $\mathrm{L} = \D^2$, whose null space is generated by $x\mapsto 1$ and $x \mapsto x$, and for sampling measurements. Note that the application of these known theorems requires to prove that the point evaluation $f\mapsto f(x_0)$ is weak-* continuous on $\BV$ for any $x_0 \in \R$, which has been shown in~\cite[Theorem 1]{unser2018representer}. These theorems has been recently revisited and/or extended by several authors~\cite{boyer2019representer,flinth2019exact,bredies2019sparsity}.
  
    Theorem~\ref{theo:RTweneed} is called a ``representer theorem", as initially proposed in~\cite{Unser2017splines}, because it specifies the form of the extreme-point solutions of the optimization problem. It is then possible to reduce the optimization task to functions of the form~\eqref{eq:extremepoints}, which considerably simplifies the analysis~\cite{gupta2018continuous,Debarre2019}.
    Theorem~\ref{theo:RTweneed} is also an existence result. It guarantees that the minimization problem~\eqref{eq:noiselessproblem} admits at least one piecewise-linear solution. In particular, if the solution is unique, then it is a piecewise-linear spline. However, Theorem~\ref{theo:RTweneed} is not informative regarding the knots locations $\tau_k$, which may be distinct from the sampling locations $x_m$.
    
    To the best of our knowledge, very few attempts have been made to characterize the cases where gTV optimization problems admit a unique solution, and to describe the solution set when the solution is not unique. In this paper, we provide complete answers to these questions for the reconstruction of functions via sampling measurements and with $\mathrm{BV}^{(2)}$-type regularization. 

\subsection{Dual Certificates}

This section presents the main tools for the study of the~\eqref{eq:noiseless} problem (with $\x\in\RR^M$ the ordered distinct sampling locations and $\V y_0\in\RR^M$ the measurements), coming from the duality theory, which are at the core of our contributions. Our strategy consists in studying a particular class of continuous functions, called \emph{dual certificates}, which can be used individually to certify that an element $f\in\BV$ is a solution of the optimization problem~\eqref{eq:noiseless}. More interestingly, from the properties of a given dual certificate, it is possible to precisely describe the whole structure of the set of solutions (see Theorem~\ref{theo:sol_set}) and, in particular, to determine whether or not the sparse solution given by Theorem~\ref{theo:RTweneed} is the unique solution of the problem (see Proposition~\ref{prop:uniqueness}).
%For a discussion on this matter involving a general differential operator $\Op L$ (instead of $\Op D^2$) and a general forward operator $\nuf$ (instead of sampling), see~\cite{denoyelle2020}.

Before giving the main results of this section (Propositions~\ref{prop:cns-sol-constrained} and \ref{prop:cns-sol-constrained-fixed-dual-certif}), let us first introduce the definition of a dual pre-certificate.

\begin{definition}[Dual Pre-Certificate]\label{def:dual-certif}
We say that a function $\eta\in\Co$ is a \emph{dual pre-certificate} (for the problem~\eqref{eq:noiseless}) 
if its norm satisfies $\normi{\eta} \leq 1$ and if $\eta$ is of the form
\begin{align}
\eta = \sum_{m=1}^M \dualvarm \green{x_m - \cdot}
\end{align}
for some vector $\dualvar = (c_1 , \ldots, c_M) \in \R^M$ such that $\dotp{\dualvar}{\un} = \dotp{\dualvar}{\x} = 0$ (with $\un\eqdef(1,\ldots,1)\in\RR^M$).
%if and only if the following conditions are satisfied
%\begin{align*}
%&\exists\dualvar\in\RR^M, \ \eta = \sum_{m=1}^M \dualvarm \green{x_m - \cdot} \qwithq \dotp{\dualvar}{\un} = \dotp{\dualvar}{\x} = 0 \qandq \normi{\eta} \leq 1.
%\end{align*}
\end{definition}

A dual pre-certificate is therefore a piecewise-linear spline. 
The conditions $\dotp{\dualvar}{\un} = \dotp{\dualvar}{\x} = 0$ ensure that $\eta$ is compactly supported, and is thus an element of $\Co$ (indeed, we have $\eta(x) = - \langle \V{c}, \V{1} \rangle x + \langle \V{c}, \V{x} \rangle = 0$ for any $x \leq x_1$).
We shall present an explicit construction of such a pre-certificate in Proposition~\ref{prop:canocertif} with the piecewise-linear spline $\etacano$.
A dual certificate is a pre-certificate that satisfies an additional condition (see Proposition~\ref{prop:cns-sol-constrained}) that ensures that the vector $\V c\in\RR^M$ in Definition~\ref{def:dual-certif} is a solution of the dual problem of~\eqref{eq:noiseless}. 
%Only a dual certificate can be used to characterize the solution set of~\eqref{eq:noiseless}. 

%\begin{proposition}\label{prop:existence-dual-certificate}
%There exists at least one dual certificates for the problem~\eqref{eq:noiseless}.
%\end{proposition}

From~\eqref{eq:fdecompose}, we know we can parametrize any $f\in\BV$ with a unique element $(w,(\beta_0, \beta_1)) \in\Radon\times\RR^2$ through the relation
\begin{align}
 \forall x \in \R, \quad f(x) = \Itwo\{w\} (x) + \beta_0 + \beta_1 x.
\end{align}
Dual certificates determine the localization of the support of $w$ when $f$ is a solution of~\eqref{eq:noiseless}. %This property is at the core of our contributions. 
To formulate this property, we need the following definition which introduces the concepts of signed support of a measure (see Section 1.4 of~\cite{Duval2015exact}) and signed saturation set of a pre-certificate (see~\cite[Definition 3]{Duval2015exact}).

\begin{definition}[Signed Support and Signed Saturation Set]\label{def:supp-sat}
Let $w\in\Radon$ and $\eta\in\Co$ be a dual pre-certificate in the sense of Definition~\ref{def:dual-certif}. We define the \emph{signed support} of $w$ by
\begin{align}
	\ssupp w \eqdef \supp(w_+) \times \{1\} \cup \supp(w_-) \times \{-1\},
\end{align}
where $w_+$ and $w_-$ are positive measures coming from the Jordan decomposition of $w=w_+ - w_-$.
Moreover from the positive and negative saturation sets of $\eta$, defined as
\begin{align}
	\satp \eta \eqdef \{x\in\RR: \eta(x)=1\} \qandq \satn \eta \eqdef \{x\in\RR: \eta(x)=-1\}
\end{align}
 respectively, we define the \emph{signed saturation set} of $\eta$ by
\begin{align}
	\ssat \eta \eqdef \satp{\eta} \times \{1\} \cup \satn{\eta} \times \{-1\}.
\end{align}
\end{definition}
Note that the sets $\ssupp w$, $\satp \eta$, $\satn \eta$, $\ssat \eta$ are all closed.
A dual pre-certificate $\eta$ is a piecewise-linear spline in $\Co$ with norm $\lVert \eta \rVert_\infty \leq 1$. Hence, its signed saturation set is necessarily a union of closed intervals (that can be singletons).

We can now state the first main result of this section, the proof of which can be found in~\ref{app:2}. 
It characterizes the solutions of~\eqref{eq:noiseless} via the signed support of their innovation using the signed saturation set of some dual pre-certificate. 
%It shows that a function of $\BV$ satisfying the interpolation conditions is a solution of~\eqref{eq:noiseless} if and only if the signed support of its innovation is a subset of the signed saturation set of some dual pre-certificate.

\begin{proposition}\label{prop:cns-sol-constrained}
Let $\x\in \R^M$ be the ordered sampling locations, and $\V{y}_0 \in \R^M$. An element $\fopt \in\BV$ is a solution of~\eqref{eq:noiseless} if and only if $\fopt$ satisfies the interpolation conditions $\fopt(x_m)=y_{0,m}$ for all $m\in\{1,\ldots,M\}$ and one can find a dual pre-certificate $\eta$ (Definition~\ref{def:dual-certif}) such that
\begin{align}
	\mnorm{w} = \dotp{w}{\eta}, \label{eq:duality-interpolation-general}
\end{align}
where $w\eqdef \mathrm{D}^2  \{\fopt\}$ is the innovation of $\fopt$.
The condition \eqref{eq:duality-interpolation-general} 
is moreover equivalent to the inclusion
%can be equivalently reformulated as%expanded as
%\begin{align}
%	\mnorm{w_{|\satp \eta}} = \dotp{w_{|\satp \eta}}{1}, \quad \mnorm{w_{|\satn \eta}} = \dotp{w_{|\satn \eta}}{-1} \qandq w_{|\satp{\eta}^c\cap\satn{\eta}^c} = 0. \label{eq:duality-interpolation-expanded}
%\end{align}
%\begin{align}
%	w_{|\satp \eta}\geq 0, \quad w_{|\satn \eta}\leq 0 \qandq w_{|\satp{\eta}^c\cap\satn{\eta}^c} = 0. \label{eq:duality-interpolation-expanded}
%\end{align}
\begin{align}\label{eq:duality-interpolation-expanded}
	\ssupp {w} \subset \ssat \eta.
\end{align}
The dual pre-certificate $\eta$ is then called a \emph{dual certificate} (for problem~\eqref{eq:noiseless}).
\end{proposition}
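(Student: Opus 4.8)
The plan is to prove this via strong duality for the constrained problem~\eqref{eq:noiseless}, following the standard Fenchel–Rockafellar route adapted to the measure setting. First I would set up the dual problem. Writing $f = \Itwo\{w\} + \alpha + \beta x$ for $w \in \Radon$ and $(\alpha,\beta)\in\RR^2$, the interpolation constraints $f(x_m) = y_{0,m}$ become affine constraints in $(w,\alpha,\beta)$, and the objective is $\mnorm{w}$, which is independent of $(\alpha,\beta)$. The key structural observation is that the sampling functional evaluated on $\Itwo\{w\}$ reads $\langle w, \Itwo^*\delta_{x_m}\rangle$, and (using the properties of $\Itwo$ from Proposition~\ref{prop:Itwo}) the relevant kernel is $x \mapsto (x_m - x)_+$ up to affine terms that are absorbed by $\alpha,\beta$. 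Dualizing the $M$ interpolation constraints with a multiplier $\dualvar = (c_1,\ldots,c_M)\in\RR^M$, the freedom in $(\alpha,\beta)$ forces the orthogonality constraints $\dotp{\dualvar}{\un} = \dotp{\dualvar}{\x} = 0$ (these are exactly the conditions that make $\eta = \sum_m c_m \green{x_m - \cdot}$ compactly supported, hence in $\Co$), and the partial minimization over $w\in\Radon$ of $\mnorm{w} - \langle w, \eta\rangle$ yields $0$ if $\normi{\eta}\le 1$ and $-\infty$ otherwise — this is precisely the statement that $\Radon$ is the dual of $\Co$ and $\mnorm\cdot$ is the dual norm of $\normi\cdot$. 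Thus the dual problem is $\max \dotp{\dualvar}{\V y_0}$ over $\dualvar$ with $\dotp{\dualvar}{\un}=\dotp{\dualvar}{\x}=0$ and $\normi{\sum_m c_m\green{x_m-\cdot}}\le 1$, i.e.\ maximization over dual pre-certificates.

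Next I would establish strong duality and attainment, so that there is no duality gap and the dual optimum is achieved by some $\dualvar^\star$; equivalently, by some dual pre-certificate $\eta^\star$. Since $M\ge 2$ and the $x_m$ are distinct, the canonical solution (the function connecting the data points) is feasible, so the primal value is finite; the dual feasible set is a nonempty (it contains $\dualvar = 0$) compact convex subset of the finite-dimensional space $\{\dualvar : \dotp{\dualvar}{\un}=\dotp{\dualvar}{\x}=0\}$, on which the linear objective attains its max. Strong duality itself follows either from Fenchel–Rockafellar (the constraint qualification is met because the canonical solution is a feasible point at which the objective is finite and continuous) or can simply be quoted from the general representer-theorem framework cited around Theorem~\ref{theo:RTweneed}; I would take the quickest legitimate route here. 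With strong duality in hand, a feasible $\fopt$ (with innovation $w = \D^2\fopt$) is primal-optimal if and only if there is a dual-optimal $\eta$ with $\mnorm{w} = \dotp{w}{\eta}$: the "only if" comes from pairing the primal and dual optimal values through $\mnorm{w} \ge \dotp{w}{\eta} = \dotp{\dualvar}{\V y_0}$ with equality forced at optimality; the "if" comes from the same chain read backwards, certifying that $\fopt$ attains the primal value. This gives exactly~\eqref{eq:duality-interpolation-general}.

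Finally I would prove the equivalence of~\eqref{eq:duality-interpolation-general} with the inclusion~\eqref{eq:duality-interpolation-expanded}. Using the Jordan decomposition $w = w_+ - w_-$ and $\normi\eta\le 1$, we have $\dotp{w}{\eta} = \int \eta\,\dd w_+ - \int \eta\,\dd w_- \le \int 1\,\dd w_+ + \int 1\,\dd w_- = \mnorm{w}$, with equality if and only if $\eta = 1$ holds $w_+$-a.e.\ and $\eta = -1$ holds $w_-$-a.e. Since $\eta$ is continuous and $w_\pm$ are positive measures, "$\eta = 1$ a.e.\ $w_+$" is equivalent to $\supp(w_+)\subset\{\eta = 1\} = \satp\eta$, and likewise $\supp(w_-)\subset\satn\eta$; packaging these with the $\pm 1$ labels is exactly $\ssupp w \subset \ssat\eta$. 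I expect the main obstacle to be the clean justification of the duality computation in the nonreflexive setting — specifically, verifying the constraint qualification / absence of a duality gap and the attainment of the dual maximum — together with the bookkeeping that turns the $(\alpha,\beta)$-invariance of the objective into the two linear constraints on $\dualvar$ and the compact support of $\eta$; the support-characterization step is then a routine measure-theoretic argument.
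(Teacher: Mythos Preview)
Your proposal is correct and follows essentially the same route as the paper: reformulate via the $\Itwo$ decomposition, derive the Fenchel dual over $\dualvar\in\RR^M$ (the $(\alpha,\beta)$-freedom producing the two orthogonality constraints, the $w$-minimization producing $\normi{\eta}\le 1$), establish strong duality and dual attainment, read off the extremality condition $\mnorm{w}=\langle w,\eta\rangle$, and finish with the Jordan-decomposition argument for~\eqref{eq:duality-interpolation-expanded}.

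One point to tighten, which you correctly anticipated as the main obstacle: your stated constraint qualification (``the canonical solution is a feasible point at which the objective is finite and continuous'') does not suffice as written, because in the primal formulation the constraint term is $\iota_{\{\obs\}}$, which is nowhere continuous, so the standard Fenchel--Rockafellar hypothesis fails on that side. The paper circumvents this by applying the qualification on the \emph{dual} side instead: it perturbs the dual problem and observes that $\eta\mapsto \iota_{\normi{\cdot}\le 1}(-\eta)$ is finite and continuous at $0\in\Co$, obtains strong duality between the dual and its bidual, and then checks that the bidual value coincides with the primal value. Your compactness argument for dual attainment (finite-dimensional $\dualvar$, closed bounded feasible set) is the same as the paper's.
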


\begin{remark}
When $\fopt\in\BV$ is a piecewise-linear spline, \ie $\fopt(x)= \sum_{k=1}^K a_k\green{x -\tau_k}+b_0+b_1 x$ for all $x\in \R$ (see~\eqref{eq:splines_def}), the condition~\eqref{eq:duality-interpolation-expanded} is equivalent to the following interpolation requirements on the dual pre-certificate $\eta$
\begin{align}
	\forall k\in\{1,\ldots,K\}, \ \eta(\tau_k) = \sign(a_k).% \qandq \normi{\eta}\leq1.\label{eq:subdiff-sparse}
\end{align}
\end{remark}

From Proposition~\ref{prop:cns-sol-constrained}, a dual certificate $\eta$ is thus a dual pre-certificate that certifies that a given $\fopt\in\BV$ is a solution of~\eqref{eq:noiseless}, \ie $\fopt$ satisfies $\fopt(x_m)=y_{0,m}$ for all $m\in\{1,\ldots,M\}$ and $\ssupp {\Op D^2 \fopt} \subset \ssat \eta$ (or equivalently $\mnorm{\Op D^2 \fopt} = \dotp{\Op D^2 \fopt}{\eta}$). Once we know that some $\eta$ is a dual certificate, it can be used to check whether \emph{any} $f\in\BV$ is a solution of~\eqref{eq:noiseless}. In other words, contrary to what is seemingly implied in Proposition~\ref{prop:cns-sol-constrained}, there is no need to find a new dual pre-certificate for each candidate solution $f$. This is formulated in the following proposition, the proof of which can be found in~\ref{app:2bis}.

\begin{proposition}\label{prop:cns-sol-constrained-fixed-dual-certif}
Let $\x\in \R^M$ be the ordered sampling locations, $\V{y}_0 \in \R^M$, and let $\eta\in\Co$ be dual certificate as defined in Proposition~\ref{prop:cns-sol-constrained} for the problem~\eqref{eq:noiseless}. Then, an element $\fopt \in\BV$ is a solution of~\eqref{eq:noiseless} if and only if $\fopt$ satisfies the interpolation conditions $\fopt(x_m)=y_{0,m}$ for all $m\in\{1,\ldots,M\}$ and 
\begin{align}\label{eq:duality-interpolation-expanded-fixed-dual-certif}
	\ssupp {w} \subset \ssat \eta.
\end{align}
or equivalently $\mnorm{w} = \dotp{w}{\eta}$, where $w\eqdef\Op D^2 \fopt$ is the innovation of $\fopt$.
\end{proposition}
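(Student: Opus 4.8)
The plan is to deduce both implications directly from Proposition~\ref{prop:cns-sol-constrained}, the only additional ingredient being the elementary observation that, for a function of the form prescribed in Definition~\ref{def:dual-certif}, the pairing $\dotp{\D^2 f}{\eta}$ depends on $f\in\BV$ only through the sample values $f(x_1),\dots,f(x_M)$.

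The implication ``interpolation and $\ssupp w\subset\ssat\eta$'' $\Rightarrow$ ``$\fopt$ solves \eqref{eq:noiseless}'' is immediate: a dual certificate is in particular a dual pre-certificate, and by the last sentence of Proposition~\ref{prop:cns-sol-constrained} the inclusion \eqref{eq:duality-interpolation-expanded-fixed-dual-certif} is equivalent to $\mnorm w=\dotp w\eta$; hence the ``if'' part of Proposition~\ref{prop:cns-sol-constrained} applies verbatim with this particular $\eta$ and yields that $\fopt$ is a solution. No new pre-certificate has to be produced.

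For the converse, I would first establish the key identity. Write $\eta=\sum_{m=1}^M c_m\green{x_m-\cdot}$ with $\dotp{\V c}{\un}=\dotp{\V c}{\x}=0$, as in Definition~\ref{def:dual-certif}. Using the decomposition \eqref{eq:fdecompose} of an arbitrary $f\in\BV$ as $f=\Itwo\{w\}+\alpha+\beta(\cdot)$ together with the explicit double-primitive (Green's-function) description of $\Itwo$ recalled in Appendix~\ref{app:1} --- equivalently, a twofold integration by parts, the boundary terms vanishing since $\eta$ is compactly supported --- one gets
\begin{align}
    \dotp{w}{\eta}=\sum_{m=1}^M c_m\, f(x_m),
\end{align}
the affine contributions (both the boundary correction built into $\Itwo$ and the $\alpha+\beta x$ part of $f$) dropping out precisely because $\dotp{\V c}{\un}=\dotp{\V c}{\x}=0$. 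In particular, whenever $f$ interpolates $\obs$ one has $\dotp{w}{\eta}=\dotp{\V c}{\obs}$, a quantity that does not depend on the chosen interpolant. Now, since $\eta$ is a dual certificate, Proposition~\ref{prop:cns-sol-constrained} provides at least one solution $g_0$ of \eqref{eq:noiseless} with $\mnorm{\D^2 g_0}=\dotp{\D^2 g_0}{\eta}$; the left-hand side equals the (attained, by Theorem~\ref{theo:RTweneed}) optimal value of \eqref{eq:noiseless}, and the right-hand side equals $\dotp{\V c}{\obs}$ by the identity above, so the optimal value of \eqref{eq:noiseless} equals $\dotp{\V c}{\obs}$. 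Finally, let $\fopt$ be any solution, with innovation $w=\D^2\fopt$: it interpolates $\obs$, hence $\dotp{w}{\eta}=\dotp{\V c}{\obs}=\mnorm w$, i.e.\ \eqref{eq:duality-interpolation-general} holds, and by the equivalence \eqref{eq:duality-interpolation-general}$\Leftrightarrow$\eqref{eq:duality-interpolation-expanded} from Proposition~\ref{prop:cns-sol-constrained} this is exactly \eqref{eq:duality-interpolation-expanded-fixed-dual-certif}.

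I do not expect a genuine obstacle here: the statement is essentially bookkeeping on top of Proposition~\ref{prop:cns-sol-constrained}. The only point requiring care is the identity $\dotp{w}{\eta}=\sum_m c_m f(x_m)$, because $f$ may grow linearly while $w$ is merely a finite measure; this is handled by the mapping properties of $\Itwo$ and the compact support of $\eta$ (guaranteed by the orthogonality conditions, as already noted after Definition~\ref{def:dual-certif}), and in fact this computation has presumably already been carried out within the proof of Proposition~\ref{prop:cns-sol-constrained} in Appendix~\ref{app:2}, so in Appendix~\ref{app:2bis} it can simply be reused.
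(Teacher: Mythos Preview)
Your proposal is correct. The ``if'' direction is identical to the paper's, and for ``only if'' both arguments amount to showing that the vector $\V c$ associated with $\eta$ is a solution of the dual problem, so that primal--dual optimality forces $\mnorm{w}=\dotp{w}{\eta}$ for every primal solution.

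The difference is one of packaging. The paper works at the abstract level: it observes (via Lemma~\ref{lem:struct-certif}) that $\eta=\nuu^*(\V c)$, then uses the existence of a certified solution $\tilde f$ together with the optimality-conditions equivalence of Lemma~\ref{lem:optimality-certificate} to conclude that $\V c$ solves the dual problem~\eqref{eq:constrained-dual}; the same lemma then gives the ``only if'' direction for any $\fopt$ in one stroke. You instead unpack this duality by hand: your identity $\dotp{\D^2 f}{\eta}=\sum_m c_m f(x_m)$ is exactly the adjoint relation $\dotp{w}{\nuu^*(\V c)}=\dotp{\nuu(w)}{\V c}$ combined with the orthogonality conditions (and is indeed implicit in Appendix~\ref{app:2}), and your conclusion that the optimal value equals $\dotp{\V c}{\obs}$ is precisely strong duality at the point $\V c$. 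Your route is more self-contained and avoids naming the dual problem; the paper's is shorter because the heavy lifting has already been isolated in Lemma~\ref{lem:optimality-certificate}.
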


To end this section, let us illustrate how the concept of dual certificates can be used to describe the solution set of~\eqref{eq:noiseless}. Suppose that we know that some $\eta$ is a dual certificate (we prove in Proposition~\ref{prop:optimality_cano} that this is the case of the dual pre-certificate $\etacano$ introduced in Proposition~\ref{prop:canocertif}), then the condition $\ssupp w \subset \ssat \eta$ of Proposition~\ref{prop:cns-sol-constrained-fixed-dual-certif} enforces strong constraints on any candidate solution of~\eqref{eq:noiseless}. This is all the more true when $\ssat \eta$ is a discrete set, which we consider in the next definition and proposition.

\begin{definition}[Nondegeneracy]\label{def:non-degen}
Let $\x\in \R^M$ be the ordered sampling locations, $\V{y}_0 \in \R^M$ and let $\eta \in \Co$ be any dual certificate as defined in Proposition~\ref{prop:cns-sol-constrained}. We say that $\eta$ is \emph{nondegenerate} if its signed saturation set $\ssat \eta$ defined in Definition~\ref{def:supp-sat} is a discrete set. Otherwise, we say that it is \emph{degenerate}.
\end{definition}

\begin{proposition}[General Uniqueness Result for~\eqref{eq:noiseless}]\label{prop:general-uniqueness}
Let $\x\in \R^M$ be the ordered sampling locations and $\V{y}_0 \in \R^M$. If there exists a nondegenerate dual certificate in the sense of Definition~\ref{def:dual-certif}, then the optimization problem~\eqref{eq:noiseless} has a unique solution, which is a piecewise-linear spline in the sense of Definition~\ref{def:splines} with $K \leq M-2$ knots $\tau_k$ that form a subset of the sampling points $\{x_2,\ldots,x_{M-1}\}$.
%the sparse solution given by Theorem~\ref{theo:RTweneed} is the unique solution of~\eqref{eq:noiseless}.
\end{proposition}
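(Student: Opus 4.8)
The plan is to feed the nondegeneracy hypothesis into Proposition~\ref{prop:cns-sol-constrained-fixed-dual-certif} and then exploit the rigid piecewise-linear structure of a dual pre-certificate. First I would fix a nondegenerate dual certificate $\eta$ and analyze its signed saturation set. Since $\eta(x)=\sum_{m=1}^M c_m(x_m-x)_+$ for a coefficient vector $\V{c}$ with $\dotp{\V{c}}{\un}=\dotp{\V{c}}{\x}=0$, the function $\eta$ is affine on each interval $[x_j,x_{j+1}]$, $j=1,\ldots,M-1$, and is identically zero on $(-\infty,x_1]\cup[x_M,+\infty)$; in particular $\eta(x_1)=\eta(x_M)=0$, so no saturation occurs at $x_1$, at $x_M$, or outside $[x_1,x_M]$. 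On a fixed interval $[x_j,x_{j+1}]$ the restriction of $\eta$ is affine with $\normi{\eta}\le1$, leaving three cases: if it is the constant $+1$ or $-1$, the whole nondegenerate interval lies in $\ssat\eta$, contradicting nondegeneracy; if it is constant of modulus $<1$, it never saturates there; and if it is nonconstant it is strictly monotone, hence it attains $+1$ (resp.\ $-1$) at most at the endpoint where it is maximal (resp.\ minimal). In every case $(\satp\eta\cup\satn\eta)\cap[x_j,x_{j+1}]\subseteq\{x_j,x_{j+1}\}$, and taking the union over $j$ yields $\satp\eta\cup\satn\eta\subseteq\{x_2,\ldots,x_{M-1}\}$.

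Second I would transfer this to the solutions. By Proposition~\ref{prop:cns-sol-constrained-fixed-dual-certif}, any solution $\fopt$ of~\eqref{eq:noiseless} satisfies $\ssupp w\subset\ssat\eta$ where $w\eqdef\D^2\fopt$; hence $\supp(w_+)\subseteq\satp\eta$ and $\supp(w_-)\subseteq\satn\eta$, so $\supp w\subseteq\satp\eta\cup\satn\eta$, which by the first step is a finite subset of $\{x_2,\ldots,x_{M-1}\}$. A measure supported on a finite set is a finite weighted sum of Dirac masses, so $\fopt$ is a piecewise-linear spline in the sense of Definition~\ref{def:splines}, with $K$ knots all lying in $\{x_2,\ldots,x_{M-1}\}$ and in particular $K\le M-2$. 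This already establishes the asserted form of any solution.

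Third, for uniqueness I would use a finite-dimensional injectivity argument. Let $V$ be the $M$-dimensional space of functions $x\mapsto b_0+b_1x+\sum_{j=2}^{M-1}a_j(x-x_j)_+$, which by the second step contains the whole solution set of~\eqref{eq:noiseless}. It suffices to show the sampling map $V\to\R^M$, $f\mapsto(f(x_1),\ldots,f(x_M))$, is injective: then the solution set, being cut out of $V$ by the $M$ interpolation equations $f(x_m)=y_{0,m}$, has at most one element, and combined with the nonemptiness guaranteed by Theorem~\ref{theo:RTweneed} it is a singleton. Injectivity holds because every $f\in V$ has its knots among the sampling points, hence is affine on each $[x_m,x_{m+1}]$; so if $f$ vanishes at all the $x_m$ it is identically zero on $[x_1,x_M]$, and evaluating $f$ successively on the nonempty subintervals bounded by $x_1$ and the distinct interior knots then forces $b_0=b_1=0$ and all $a_j=0$.

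The main obstacle is the first step: a dual pre-certificate is not an arbitrary unit-sup-norm function but a ReLU-spline whose only possible knots sit at the sampling points, and nondegeneracy is precisely what forbids flat saturated plateaus and pins the saturation set to the interior sampling grid. The remaining steps are routine --- an application of the already-established Proposition~\ref{prop:cns-sol-constrained-fixed-dual-certif} and an elementary interpolation/rank argument --- the only delicate points being the constant-piece subcase (the single place where nondegeneracy is invoked) and the observation that the subintervals in the injectivity argument are genuinely nondegenerate because the knots lie strictly between $x_1$ and $x_M$.
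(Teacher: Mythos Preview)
Your proof is correct and follows essentially the same route as the paper: use the piecewise-linear structure of the dual pre-certificate together with nondegeneracy to force $\satp\eta\cup\satn\eta\subseteq\{x_2,\ldots,x_{M-1}\}$, feed this into Proposition~\ref{prop:cns-sol-constrained-fixed-dual-certif} to pin the support of $\D^2\fopt$ to the interior sampling grid, and then establish uniqueness by a finite-dimensional rank argument. The only cosmetic difference is in the last step: the paper writes out the $M\times M$ matrix of the system $\sum_{k=2}^{M-1}a_k\,\nuu(\delta_{x_k})+\alpha\un+\beta\x=\obs$ and checks it is lower triangular with nonzero diagonal, whereas you argue directly that the sampling map on the space $V$ is injective by successively reading off $b_0,b_1,a_2,\ldots,a_{M-1}$ from left to right --- this is exactly the forward-substitution that the lower-triangular structure encodes.
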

The proof of Proposition~\ref{prop:general-uniqueness} is given in~\ref{app:3}.

% !TEX root = ../main.tex
%
\section{The Solutions of the~\eqref{eq:noiselessclean}} \label{sec:noiseless}
    
    In this section, we consider the optimization problem~\eqref{eq:noiselessclean} where the $x_m$ for $m\in\{1,\ldots,M\}$ are distinct and ordered sampling locations and $\obs \in \R^M$ is a fixed measurement vector.
    This setting is especially relevant when the measurements $y_{0,m}$ are exactly the values of the input signal at locations $x_m$ (noiseless case).
    The solution set is
    \begin{equation}\label{eq:noiselessclean}
      \mathcal{V}_0  \eqdef  \underset{\substack{f \in \BV \\ f(x_m) = y_{0,m}, \ m\in\{1,\ldots,M\}}}{\arg \min} \lVert \Op{D}^2 f \rVert_{\mathcal{M}}, \tag{\gBPC}
    \end{equation}
    and is known to admit at least one piecewise-linear solution due to Theorem~\ref{theo:RTweneed}.

    \subsection{Canonical Solution and Canonical Dual Certificate} \label{sec:canonicalstuff}

Thereafter, we identify the complete set of solutions~\eqref{eq:noiselessclean}. This allows us to fully determine in which cases this optimization problem admits a unique solution.
Our analysis is based on the construction of a pair $(\fcano, \etacano) \in \BV \times \Spc{C}_0(\R)$ that satisfies Proposition \ref{prop:cns-sol-constrained}, which we call the canonical solution and canonical dual certificate respectively. The former is simply the function that connects the points $\P0{m} = \Point{x_m}{y_{0,m}}$. 

\begin{definition}[Canonical Interpolant]
    \label{def:canonicalsolutiondef}
        Let $\x \in \R^M$ be the ordered sampling locations and $\obs \in \R^M$ with $M \geq 2$. We define $\fcano$ as the unique piecewise-linear spline that interpolates the data points with the minimum number of knots, \ie such that 
        \begin{itemize}
            \item $\fcano (x_m) = y_{0,m}$ for any $m \in \{1, \ldots , M \}$ and
            \item $\fcano$ has at most $M-2$ knots which form a subset of $\{x_m: 2\leq m\leq M-1\}$.
        \end{itemize}
        We refer to $\fcano$ as the \emph{canonical interpolant}.
\end{definition}

 %   From Definition \ref{def:canonicalsolutiondef}, one can deduce the explicit expression of $\fcano$.
    The existence and uniqueness of $\fcano$ in Definition \ref{def:canonicalsolutiondef} simply follows from the number of degrees of freedom of a piecewise-linear spline whose knots are known. The canonical interpolant is of the form
    \begin{equation}
    \label{eq:canonical_sol}
        \fcano(x) = a_1 x + a_M + \sum_{m=2}^{M-1} a_m (x - x_m)_+
    \end{equation}
    with $\V{a} = (a_1, \ldots , a_M )\in \R^M$. 
    By definition, $\fcano$ is linear on the interval $(x_m, x_{m+1})$ for $m \in \{2, \ldots , M-1\}$. The interpolatory conditions $\fcano(x_m) = y_m$ and $\fcano(x_{m+1}) = y_{m+1}$ then imply that its slope is $s_m = \frac{y_{0,m+1}-y_{0,m}}{x_{m+1} - x_m}$.
    Yet from~\eqref{eq:canonical_sol} we get that $s_m = a_1 + \cdots + a_m$. This implies that $a_1 = s_1$ and that $a_m = s_m - s_{m-1}$ for $m\in\{2, \ldots, M-1\}$. Finally, the equation $\fcano(x_1) = y_{0,1}$ yields $a_M = y_{0,1} - a_1 x_1$. Consequently, the vector $\V{a} \in \R^M$ in~\eqref{eq:canonical_sol} is given by 
\begin{align}
\label{eq:a_coefs}
    \begin{cases}
    a_1 = \frac{y_{0,2}-y_{0,1}}{x_2 - x_1}, \\
    a_m = \frac{y_{0,m+1}-y_{0,m}}{x_{m+1} - x_m} - \frac{y_{0,m}-y_{0,m-1}}{x_m - x_{m-1}}, \quad \forall m\in\{2, \ldots, M-1\}, \\
    a_M = y_{0,1} - \frac{y_{0,2}-y_{0,1}}{x_2 - x_1} x_1.
    \end{cases}
\end{align}

In order to prove that $\fcano$ is always a solution of~\eqref{eq:noiselessclean}, we construct a particular dual pre-certificate $\etacano$.
\begin{proposition}[Canonical Pre-Certificate]
\label{prop:canocertif}
Let $\x\in \R^M$ be the ordered sampling locations and $\obs \in \R^M$. Let $\V{a} \in \R^M$ be the vector defined by~\eqref{eq:a_coefs}.
There exists a unique piecewise-linear spline $\etacano$ given by
\begin{align}
	&\etacano \eqdef \sum_{m=1}^M \dualvarm \green{x_m-\cdot} \qwithq \dualvar=(c_1, \ldots, c_M) \in\RR^M, \label{eq:condition1etacano} \\
	&\dotp{\dualvar}{\un} = \dotp{\dualvar}{\x} = 0,\\
	&\forall m\in\{2,\ldots,M-1\}, \ \etacano(x_m)=\sign(a_m). \label{eq:condition3etacano}
\end{align}
with the convention $\sign(0)=0$. Moreover, since $\etacano(x) = 0$ for $x \leq x_1$ and $x \geq x_M$, we have $\etacano \in \Co$ and $\normi{\etacano} = 1$. Hence, $\etacano$ is a dual pre-certificate in the sense of Definition~\ref{def:dual-certif}.
\end{proposition}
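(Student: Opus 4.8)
The plan is to construct $\etacano$ by exhibiting the vector $\dualvar\in\RR^M$ explicitly as the solution of a linear system, then verify the stated properties. First I would observe that a function of the form $\eta = \sum_{m=1}^M c_m (x_m-\cdot)_+$ is automatically a piecewise-linear spline with possible knots only at the $x_m$, and that on the interval $x\le x_1$ it equals $-\langle\dualvar,\un\rangle x + \langle\dualvar,\x\rangle$, while on $x\ge x_M$ it equals $0$ identically (since $(x_m-x)_+=0$ there). Hence imposing $\langle\dualvar,\un\rangle=\langle\dualvar,\x\rangle=0$ forces $\eta\equiv 0$ on $(-\infty,x_1]$, so $\eta$ is compactly supported in $[x_1,x_M]$ and thus lies in $\Co$. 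This already gives the membership claims once existence is established; the support statement $\etacano(x)=0$ for $x\le x_1$ and $x\ge x_M$ is then immediate.

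Next I would set up the linear system for $\dualvar$. The unknowns are $c_1,\dots,c_M$ ($M$ of them), and the constraints are the two moment conditions $\langle\dualvar,\un\rangle=0$, $\langle\dualvar,\x\rangle=0$, together with the $M-2$ interpolation conditions $\eta(x_m)=\sign(a_m)$ for $m=2,\dots,M-1$ — a total of $M$ linear equations. So existence and uniqueness of $\dualvar$ amounts to showing this $M\times M$ system is invertible. The cleanest route is to change variables to the "slope/value at nodes" description: because $\eta$ is the double integral of the discrete measure $\sum c_m\delta(\cdot-x_m)$ and vanishes to first order at $x_1$, the values $v_m\eqdef\eta(x_m)$ are determined by $\dualvar$ via a triangular-type relation (the $c_m$ are, up to the $x_{m+1}-x_m$ spacings, the second finite differences of the $v_m$, exactly mirroring the computation of $\V a$ from $\V y_0$ in \eqref{eq:a_coefs}–\eqref{eq:canonical_sol}). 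The moment conditions $\langle\dualvar,\un\rangle=\langle\dualvar,\x\rangle=0$ are precisely the conditions that make the map $(v_2,\dots,v_{M-1})\mapsto\dualvar\mapsto(v_1,\dots,v_M)$ consistent, i.e. that the doubly-integrated spline with prescribed interior values $v_m=\sign(a_m)$ and with $v_1,v_M$ left free still vanishes outside $[x_1,x_M]$; this pins down $v_1$ and $v_M$ uniquely. Running the same bookkeeping as in the derivation of \eqref{eq:a_coefs} but in reverse (prescribe the interior values, solve for the coefficients, then read off the two boundary values from the two moment equations) yields a unique $\dualvar$, hence a unique $\etacano$. Concretely, $\etacano$ is just the canonical interpolant of Definition~\ref{def:canonicalsolutiondef} built from the data $\tilde y_m\eqdef\sign(a_m)$ for $m=2,\dots,M-1$ with the two extra linear constraints forcing $\tilde y_1,\tilde y_M$ so that the resulting spline is compactly supported — the same "connect the dots" construction, so existence/uniqueness follows from the same degrees-of-freedom count already invoked for $\fcano$.

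Finally I would prove $\normi{\etacano}=1$. Since $\etacano$ is piecewise-linear with knots among $x_1,\dots,x_M$ and vanishes outside $[x_1,x_M]$, its sup norm is attained at one of the $x_m$, so $\normi{\etacano}=\max_m |\etacano(x_m)| = \max(|v_1|,1,|v_M|)$ because $|v_m|=|\sign(a_m)|\le 1$ for interior $m$. Thus it suffices to show $|v_1|\le 1$ and $|v_M|\le 1$. This is where the real content lies, and I expect it to be the main obstacle: one must show that extending the sequence $\sign(a_2),\dots,\sign(a_M-1)$ of values to $x_1$ and $x_M$ by the unique linear pieces that make $\etacano$ vanish at infinity cannot overshoot $\pm1$. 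Unwinding the definitions, $v_1=\etacano(x_1)$ equals $v_2$ minus $(x_2-x_1)$ times the slope of $\etacano$ on $(x_1,x_2)$, and since $\etacano\equiv 0$ on $(-\infty,x_1]$ and is continuous, that slope has a fixed sign relative to $v_2=\sign(a_2)$ — in fact $\etacano$ on $(-\infty,x_2)$ is the single linear piece through $(x_1,0)$ and $(x_2,\sign(a_2))$ extended, forcing $v_1=0$; wait, more carefully: $v_1=\etacano(x_1)=0$ outright, and likewise $v_M=\etacano(x_M)=0$, because the compact-support conclusion from the moment conditions gives $\etacano(x)=0$ for all $x\le x_1$ and all $x\ge x_M$, in particular at the endpoints. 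Hence $\normi{\etacano}=\max(0,1,0)=1$, provided at least one $a_m\neq 0$ for $2\le m\le M-1$ (the nondegenerate case; if all interior $a_m=0$ then $\fcano$ is affine, $\etacano\equiv0$, and the statement $\normi\etacano=1$ should be read with the understanding that the data is affine — a degenerate edge case I would flag and handle separately, or the hypotheses implicitly exclude it). The remaining routine check is that $\sign(0)=0$ is a consistent choice in \eqref{eq:condition3etacano}, which it is since those nodes simply impose $\etacano(x_m)=0$ and are absorbed into the same linear system. Assembling: existence/uniqueness from the square invertible system, $\Co$-membership and compact support from the moment conditions, and $\normi{\etacano}=1$ from evaluating at the nodes — which shows $\etacano$ is a dual pre-certificate in the sense of Definition~\ref{def:dual-certif}.
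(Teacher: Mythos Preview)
Your proposal is correct and follows essentially the same approach as the paper: both reduce existence and uniqueness of $\etacano$ to the canonical-interpolant construction (the paper does this slickly by applying Definition~\ref{def:canonicalsolutiondef} to the augmented data $(x_1-1,0),(x_1,0),(x_m,\sign(a_m)),(x_M,0),(x_M+1,0)$, which encodes the two moment conditions as two extra interpolation constraints), and both deduce $\normi{\etacano}=1$ from the fact that a compactly supported piecewise-linear spline attains its extrema at its knots, where $\etacano(x_1)=\etacano(x_M)=0$ and $|\etacano(x_m)|\le 1$ for interior $m$. Your mid-argument detour worrying that $|v_1|,|v_M|$ might overshoot was unnecessary (as you then noticed), and your flag of the degenerate case where all interior $a_m=0$ is a valid caveat that the paper glosses over.
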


\begin{figure}[t]
\subfloat[Canonical solution $\fcano$]{\includegraphics[width=0.5\linewidth]{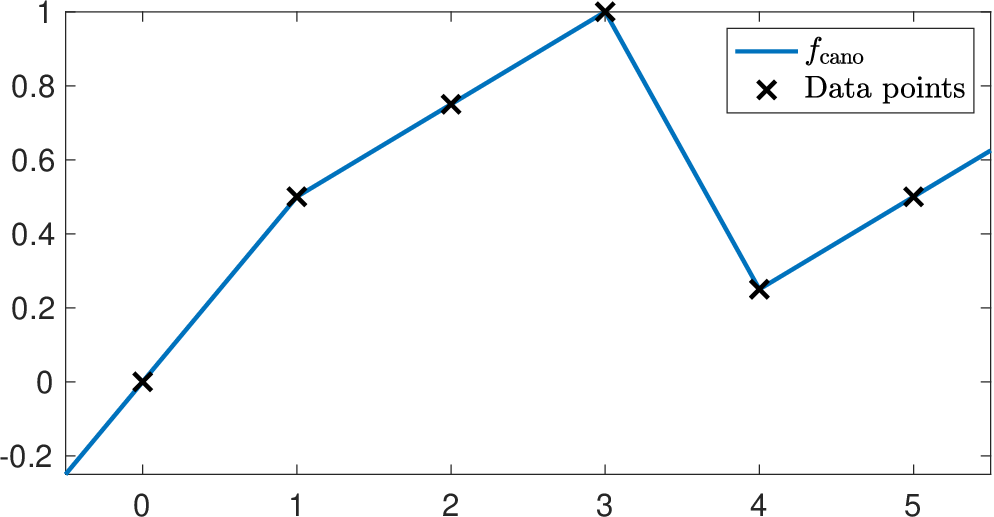}}
\subfloat[Canonical dual certificate $\etacano$]{\includegraphics[width=0.5\linewidth]{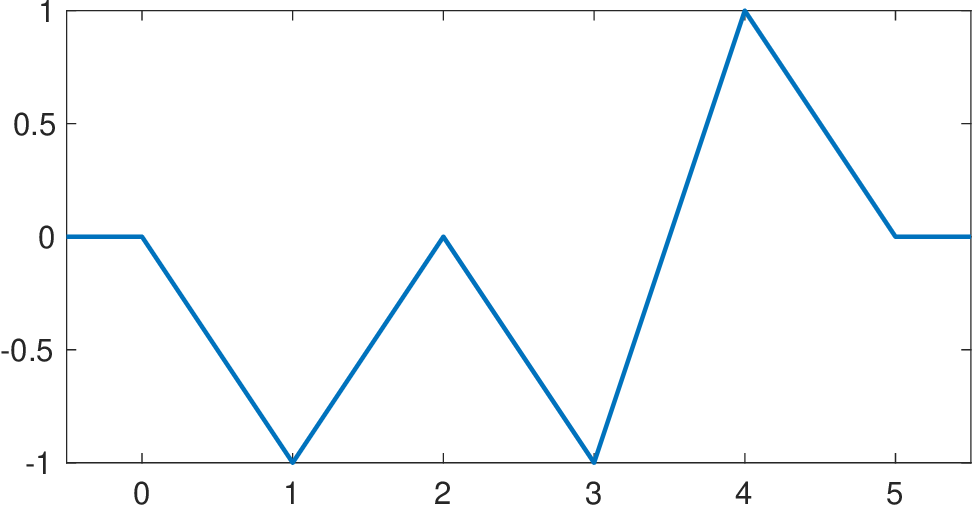}}
\caption{Example of a canonical solution and canonical dual certificate for $M=6$ with $x_m = m-1$. We have $a_2<0$, $a_3=0$, $a_4<0$, and $a_5>0$, where the $a_m$ are defined in \eqref{eq:a_coefs}. }
\label{fig:cano_example}
\end{figure}

\begin{proof}
The existence and uniqueness of such a spline follows the same argument as for $\fcano$,
%follows from Proposition~\ref{prop:canonicalsolutiondef} 
applied to the data points $(x_1-1, 0)$, $(x_1, 0)$, $(x_m, \sign(a_m))$ for $m\in\{2, \ldots, M-1\}$, $(x_M, 0)$ and $(x_M+1, 0)$. Note that the points $(x_1-1, 0)$ and $(x_M+1, 0)$ at the boundaries add two additional interpolation constraints to~\eqref{eq:condition3etacano}. Moreover, they imply that $\etacano$ does not have a linear term and is thus of the form~\eqref{eq:condition1etacano}.

Next, we notice that for $x\leq x_1$, we have $\etacano(x) = -\langle \V{c}, \x \rangle x + \langle \V{c}, \V{1} \rangle = 0$, due to  $\langle \V{c}, \x \rangle = \langle \V{c}, \V{1} \rangle = 0$. For $x\geq x_M$, $(x_m - x)_+ = 0$ for every $m \in\{1,\ldots , M\}$, hence $\etacano(x) = 0$. Then, as a piecewise-linear spline with compact support, $\etacano$ is of course in $\Co$. Being compactly supported, it is also clear that $\etacano$ attains its maximum and minimum values at its knots. In particular, $\lVert \etacano \rVert_{\infty} = \max_{m \in\{1,\ldots , M\}} \lvert \etacano(x_m) \rvert = 1$. 

\end{proof}
We now prove that the pair $(\fcano, \etacano) \in \BV \times \Spc{C}_0(\R)$ satisfies Proposition \ref{prop:cns-sol-constrained}. Although the fact that $\fcano$ is a solution to~\eqref{eq:noiselessclean} is known \cite{koenker1994quantile, mammen1997locally} and is significant in its own right, the key element of this result is the construction of the dual certificate $\etacano$. The latter will be essential to fully describe the solution set $\Spc{V}_0$.

\begin{proposition}
\label{prop:optimality_cano}
    Let $\x\in \R^M$ be the ordered sampling locations and $\obs \in \R^M$.
    The canonical interpolant $\fcano$ defined in Definition~\ref{def:canonicalsolutiondef} is a solution of~\eqref{eq:noiselessclean} and $\etacano$, defined in Proposition~\ref{prop:canocertif}, is a dual certificate as defined in Proposition \ref{prop:cns-sol-constrained}.
\end{proposition}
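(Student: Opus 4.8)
The plan is to verify that the pair $(\fcano, \etacano)$ satisfies the hypotheses of Proposition~\ref{prop:cns-sol-constrained}, which immediately yields that $\fcano$ is a solution of~\eqref{eq:noiselessclean} and that $\etacano$ is a dual certificate. By construction (Definition~\ref{def:canonicalsolutiondef}), $\fcano$ already satisfies the interpolation conditions $\fcano(x_m) = y_{0,m}$, and by Proposition~\ref{prop:canocertif}, $\etacano$ is a dual pre-certificate in the sense of Definition~\ref{def:dual-certif}. So the only thing left to check is the compatibility condition $\mnorm{w} = \dotp{w}{\etacano}$, or equivalently (by Proposition~\ref{prop:cns-sol-constrained}) the inclusion $\ssupp{w} \subset \ssat{\etacano}$, where $w = \D^2 \fcano$.

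First I would compute $w$ explicitly. From~\eqref{eq:canonical_sol}, the innovation of $\fcano$ is the discrete measure $w = \sum_{m=2}^{M-1} a_m \delta(\cdot - x_m)$, with the $a_m$ given by~\eqref{eq:a_coefs}. Hence the signed support of $w$ is $\ssupp{w} = \{(x_m, \sign(a_m)) : 2 \le m \le M-1,\ a_m \neq 0\}$; the knots where $a_m = 0$ contribute nothing. On the other side, Proposition~\ref{prop:canocertif} gives $\etacano(x_m) = \sign(a_m)$ for every $m \in \{2, \ldots, M-1\}$, together with $\normi{\etacano} = 1$. Therefore, whenever $a_m > 0$ we have $x_m \in \satp{\etacano}$, and whenever $a_m < 0$ we have $x_m \in \satn{\etacano}$. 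This is precisely the statement that $(x_m, \sign(a_m)) \in \ssat{\etacano}$ for each $m$ with $a_m \neq 0$, i.e. $\ssupp{w} \subset \ssat{\etacano}$. Invoking Proposition~\ref{prop:cns-sol-constrained} then concludes that $\fcano$ solves~\eqref{eq:noiselessclean} and that $\etacano$ is a dual certificate.

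I do not expect any serious obstacle here: the construction of $\etacano$ in Proposition~\ref{prop:canocertif} was precisely engineered through the interpolation conditions~\eqref{eq:condition3etacano} to match the signs of the coefficients $a_m$ of $\fcano$, so the inclusion $\ssupp{w} \subset \ssat{\etacano}$ is essentially immediate once one writes down both sides. The one point requiring a small remark is the treatment of the knots with $a_m = 0$: with the convention $\sign(0) = 0$, such $x_m$ are genuinely not knots of $\fcano$ and do not belong to $\ssupp{w}$, so no saturation constraint is needed there — this is consistent with, but does not require, $\etacano(x_m) = 0$ at those points. The only mild care needed is to note that $\fcano \in \BV$ (its innovation is a finite sum of Dirac masses, hence in $\Radon$) and $\etacano \in \Co$ with $\normi{\etacano} \le 1$, both of which are already established in Definition~\ref{def:canonicalsolutiondef} and Proposition~\ref{prop:canocertif} respectively, so that Proposition~\ref{prop:cns-sol-constrained} genuinely applies.
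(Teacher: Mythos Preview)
Your proposal is correct and follows essentially the same argument as the paper: verify the interpolation conditions, note that $\etacano$ is a dual pre-certificate, compute $\D^2 \fcano = \sum_{m=2}^{M-1} a_m \delta(\cdot - x_m)$, and use $\etacano(x_m) = \sign(a_m)$ to check the signed-support inclusion before invoking Proposition~\ref{prop:cns-sol-constrained}. Your additional remarks on the $a_m = 0$ case and on membership in $\BV$ and $\Co$ are sound but go slightly beyond what the paper spells out.
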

\begin{proof}
By construction, the interpolation conditions $\fcano(x_m) = y_{0,m}$ for all $m\in\{1,\ldots,M\}$ are satisfied. Moreover thanks to Proposition~\ref{prop:canocertif}, $\etacano$ is a dual pre-certificate. By Proposition~\ref{prop:cns-sol-constrained}, it remains to prove that
\begin{align}\label{eq:oc-etacano}
    \ssupp{\Op D^2 \fcano}\subset\ssat\etacano,
\end{align}
from which we deduce both that $\fcano$ is a solution of~\eqref{eq:noiselessclean} and that $\etacano$ is a dual certificate. Since, again by construction, $\etacano(x_m)=\sign(a_m)$ for all $m\in\{1,\ldots,M\}$ and $\Op D^2 \fcano=\sum_{m=2}^{M-1} a_m \dirac{x_m}$, this proves~\eqref{eq:oc-etacano}.
\end{proof}

Due to Proposition~\ref{prop:optimality_cano}, we call $\fcano$ the \emph{canonical solution} and $\etacano$ the \emph{canonical dual certificate} of the optimization problem~\eqref{eq:noiselessclean}. We show an example of such functions for given data points $(x_m, y_{0,m})_{m\in\{1,\ldots,6\}}$ in Figure~\ref{fig:cano_example}. Notice that the points $\P0{2}$, $\P0{3}$, and $\P0{4}$ are aligned, which implies that $a_3=0$ (defined in~\eqref{eq:a_coefs}).

\subsection{Characterization of the Solution Set} \label{sec:solutionnoiseless}

%In the previous section, we identified the canonical solution of~\eqref{eq:noiselessclean}. However, in general, the solution of the optimization problem is not unique.
Although identifying a solution $\fcano$ to~\eqref{eq:noiselessclean} is an important first step, this solution is not unique in general.
We characterize the case of uniqueness in Proposition~\ref{prop:uniqueness}, and then provide a complete description of the solution set when the solution is not unique in Theorem~\ref{theo:RTweneed}.
We shall see that the canonical dual certificate $\etacano$ plays an essential role regarding these issues.

\begin{proposition}[Uniqueness Result for~\eqref{eq:noiselessclean}]
\label{prop:uniqueness}
Let $\x\in \R^M$ be the ordered sampling locations and $\obs \in \R^M$.
    Then, the following conditions are equivalent.
    \begin{enumerate}
        \item~\eqref{eq:noiselessclean} has a unique solution.
        \item The canonical dual certificate $\etacano$ (defined in Proposition \ref{prop:canocertif}) is nondegenerate (see Definition \ref{def:non-degen}).
        \item For all $m \in\{2,\ldots, M-2\}$, $a_m a_{m+1} \leq 0$, where $\V{a}\in\RR^M$ is given by~\eqref{eq:a_coefs}. 
     \end{enumerate}
\end{proposition}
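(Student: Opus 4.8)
The plan is to establish $(2)\Leftrightarrow(3)$, together with $(2)\Rightarrow(1)$ and $(1)\Rightarrow(3)$; these close the loop and give the three-way equivalence. The implication $(2)\Rightarrow(1)$ is immediate: by Proposition~\ref{prop:optimality_cano}, the canonical dual certificate $\etacano$ is a dual certificate for~\eqref{eq:noiselessclean}, so if it is moreover nondegenerate, Proposition~\ref{prop:general-uniqueness} yields that~\eqref{eq:noiselessclean} has a unique solution. The content is therefore in $(2)\Leftrightarrow(3)$ and in $(1)\Rightarrow(3)$.

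For $(2)\Leftrightarrow(3)$, I would read off the signed saturation set $\ssat\etacano$ from Proposition~\ref{prop:canocertif}. On $[x_1,x_M]$ the spline $\etacano$ is affine on each $[x_i,x_{i+1}]$, with $\etacano(x_1)=\etacano(x_M)=0$ and $\etacano(x_i)=\sign(a_i)$ for $i\in\{2,\dots,M-1\}$, while $|\etacano|\le 1$ everywhere and $\etacano$ vanishes off $[x_1,x_M]$. Since an affine function that is bounded by $1$ and attains the value $1$ at an interior point of its domain is identically $1$ there, $\satp\etacano=\{\etacano=1\}$ meets each $[x_i,x_{i+1}]$ either in at most one point or in the whole interval, the latter happening exactly when $\etacano(x_i)=\etacano(x_{i+1})=1$; as the endpoint values at $x_1$ and $x_M$ vanish, this forces $i\in\{2,\dots,M-2\}$ and $a_i,a_{i+1}>0$. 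The same holds for $\satn\etacano$ with $a_i,a_{i+1}<0$. Hence $\ssat\etacano$ is discrete --- i.e. $\etacano$ is nondegenerate --- if and only if there is no $i\in\{2,\dots,M-2\}$ with $\sign(a_i)=\sign(a_{i+1})\ne 0$, that is, with $a_ia_{i+1}>0$; this is exactly condition~(3). The only things needing care here are the elementary facts that a bounded affine function touching its bound in the interior of its domain is constant, and that the level set of a finite piecewise-linear function is discrete iff it contains no nondegenerate interval.

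For $(1)\Rightarrow(3)$ I would prove the contrapositive by exhibiting a second solution whenever $a_ia_{i+1}>0$ for some $i\in\{2,\dots,M-2\}$; fix such an $i$ and assume $a_i,a_{i+1}>0$, the case $a_i,a_{i+1}<0$ being symmetric. By the analysis above, $\etacano\equiv 1$ on $[x_i,x_{i+1}]$, so $[x_i,x_{i+1}]\subset\satp\etacano$. The idea is to redistribute the two innovation masses $a_i\dirac{x_i}$ and $a_{i+1}\dirac{x_{i+1}}$ of $\D^2\fcano$ into two positive masses $b\dirac{\tau_1}+c\dirac{\tau_2}$, with $\tau_1\ne\tau_2$ in the open interval $(x_i,x_{i+1})$, chosen so as to preserve the zeroth and first moments: $b+c=a_i+a_{i+1}$ and $b\tau_1+c\tau_2=a_ix_i+a_{i+1}x_{i+1}$. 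Taking $\tau_1,\tau_2$ symmetric about the weighted centroid $\bar\tau\eqdef(a_ix_i+a_{i+1}x_{i+1})/(a_i+a_{i+1})\in(x_i,x_{i+1})$ and $b=c=(a_i+a_{i+1})/2>0$ satisfies both conditions once $\tau_1,\tau_2$ are close enough to $\bar\tau$ to stay inside $(x_i,x_{i+1})$. Setting $g\eqdef b(\cdot-\tau_1)_++c(\cdot-\tau_2)_+-a_i(\cdot-x_i)_+-a_{i+1}(\cdot-x_{i+1})_+$ and $\tilde f\eqdef\fcano+g$, the two moment conditions force $g$ to vanish outside $(x_i,x_{i+1})$ --- this is precisely where they are used, and why one works with the explicit ReLU expansion rather than with $\Itwo$, whose affine normalization at $0$ and $1$ would otherwise spoil the compact support --- so $\tilde f$ still interpolates all the data. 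Meanwhile $\D^2\tilde f$ is $\D^2\fcano$ with the masses at $x_i$ and $x_{i+1}$ replaced by the positive masses $b,c$ located in $(x_i,x_{i+1})\subset\satp\etacano$, so $\ssupp{\D^2\tilde f}\subset\ssat\etacano$; Proposition~\ref{prop:cns-sol-constrained-fixed-dual-certif} then certifies that $\tilde f$ solves~\eqref{eq:noiselessclean}. Since $\tilde f$ and $\fcano$ have innovations with different supports, $\tilde f\ne\fcano$, so~\eqref{eq:noiselessclean} has more than one solution.

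I expect $(1)\Rightarrow(3)$ to be the main obstacle: no single step is hard, but the construction must simultaneously guarantee that $g$ genuinely has compact support in $[x_i,x_{i+1}]$ (which rests on the two moment conditions), that no sampling point lies in $(x_i,x_{i+1})$ so interpolation is left untouched, and that the signed support of the modified innovation truly sits inside $\ssat\etacano$ so that Proposition~\ref{prop:cns-sol-constrained-fixed-dual-certif} is applicable.
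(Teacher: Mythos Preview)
Your proof is correct and follows essentially the same route as the paper: the same cycle of implications, the same appeal to Proposition~\ref{prop:general-uniqueness} for $(2)\Rightarrow(1)$, the same reading of the saturation set of $\etacano$ for $(2)\Leftrightarrow(3)$, and the same contrapositive strategy for $(1)\Rightarrow(3)$ via an explicitly constructed alternative solution. The only minor difference is in that construction: the paper (Lemma~\ref{lem:mountain}) merges the two masses $a_i\delta_{x_i}+a_{i+1}\delta_{x_{i+1}}$ into a \emph{single} Dirac at the barycenter $\tilde\tau=(a_ix_i+a_{i+1}x_{i+1})/(a_i+a_{i+1})$ and verifies optimality by directly checking that the regularization cost is unchanged, whereas you redistribute into two interior Diracs and certify optimality through Proposition~\ref{prop:cns-sol-constrained-fixed-dual-certif}; both arguments work, and the paper's version has the side benefit of being reused later in the sparsification analysis.
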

\begin{proof}
The equivalence $2. \Leftrightarrow 3.$ comes from the fact that $\etacano$ is nondegenerate if and only if it never saturates at $1$ or $-1$ between two consecutive knots. This is equivalent to item 3 because for all $m \in\{2,\ldots, M-1\}$, $\etacano(x_{m}) = \sign(a_{m})$.

The implication $2. \Rightarrow 1.$ is given by Proposition~\ref{prop:general-uniqueness}. We now prove the contraposition of the reverse implication $1. \Rightarrow 2$. We thus assume that $\etacano$ is degenerate, and wish to prove that~\eqref{eq:noiselessclean} has multiple solutions. Using item 3., there exists an index $m \in \{2, \ldots, M-2 \}$ such that $a_{m} a_{m+1} > 0$. We now invoke the following lemma (illustrated in Figure~\ref{fig:mountain}) that plays an important role throughout the paper.
\begin{lemma}
\label{lem:mountain}
Let $\x\in \R^M$ be the ordered sampling locations, and $\V{y}_0 \in \R^M$ with $M \geq 4$. Let $m \in \{ 2, \ldots, M-2 \}$ be an index such that $a_m a_{m+1} > 0$, where $\V{a}\in\RR^M$ is defined as in~\eqref{eq:a_coefs}. Then, the lines $(\P0{m-1}, \P0{m})$ and $(\P0{m+1}, \P0{m+2})$ are intersecting at a point $\mathrm{\widetilde{P}} = \Point{\tilde{\tau}}{\tilde{y}}$ such that $x_m < \tilde{\tau} < x_{m+1}$. Moreover, the piecewise-linear spline $\fopt$ defined by
\begin{align}
\label{eq:mountain}
\fopt(x) \eqdef 
\begin{cases}
\frac{y_{0,m} - y_{0,m-1}}{x_m - x_{m-1}} (x- x_{m-1})+ y_{0,m-1}, & \text{for }x_m < x \leq \tilde{\tau} \\
\frac{y_{0,m+2} - y_{0,m+1}}{x_{m+2} - x_{m+1}} (x- x_{m+1})+ y_{0,m+1}, & \text{for }\tilde{\tau} < x < x_{m+1} \\
\fcano(x) & \text{for }x\not\in (x_m, x_{m+1}),
\end{cases}
\end{align}
which has no knots at $x_m$ or $x_{m+1}$, is a solution of~\eqref{eq:noiselessclean}.
\end{lemma}
\begin{proof}
Let $I_0 = \{2, \ldots, M-1 \} \setminus \{ m, m + 1 \}$.
We then define
\begin{align}
\fopt(x) \eqdef a_1 x + a_M + \sum_{m' \in I_0} a_{m'} (x - x_{m'})_+ + \tilde{a} (x - \tilde{\tau})_+,
\end{align}
where $\tilde{a} = a_{m} + a_{m + 1}$ and $\tilde{\tau} = \frac{a_{m}x_{m} + a_{m+1}x_{m+1}}{\tilde{a}}$. By definition, $\tilde{\tau}$ is a barycenter of $x_{m}$ and $x_{m+1}$ with weights $\frac{a_{m}}{\tilde{a}}$ and $\frac{a_{m+1}}{\tilde{a}}$. Yet $a_{m}$ and $a_{m+1}$ have the same (nonzero) signs, which implies that these weights are in the interval $(0,1)$ and thus that $\tilde{\tau} \in (x_{m}, x_{m+1})$. Yet $\fopt$ has no knots at $x_m$ and $x_{m+1}$, so it must follow the line $(\P0{m-1}, \P0{m})$ in the interval $[x_m, \tilde{\tau}]$, and the line $(\P0{m+1}, \P0{m+2})$ in the interval $[\tilde{\tau}, x_{m+1}]$, which conforms with the first two first lines in~\eqref{eq:mountain}. Due to the continuity of $\fopt$, these lines are therefore intersecting at the point $\mathrm{\widetilde{P}} = \Point{\tilde{\tau}}{\tilde{y}} = \Point{\tilde{\tau}}{\fopt(\tilde{\tau})}$.

Next, for $x \leq x_{m}$, we have $a_{m} (x - x_{m})_+ + a_{m+1}(x - x_{m+1} )_+ = \tilde{a} (x - \tilde{\tau})_+ = 0$. Similarly, for $x \geq x_{m+1}$, we have $a_{m} (x - x_{m})_+ + a_{m+1}(x - x_{m+1} )_+ = \tilde{a} (x - \tilde{\tau})_+ = \tilde{a} (x - \tilde{\tau})$ since $x\geq \tilde{\tau}$. Therefore, for any $x \not\in (x_{m}, x_{m+1})$, we have $\fcano(x) = \fopt(x)$, which conforms with the third line in~\eqref{eq:mountain}. This also implies that $\fopt(x_m) = \fcano(x_m) = y_{0,m}$ for all $m\in\{1,\ldots,M\}$. Moreover, we have $\Vert \Op{D}^2 \fcano \Vert_\Spc{M} = \sum_{m=2}^{M-1} a_m = \sum_{m\in I_0} a_m + \tilde{a} = \Vert \Op{D}^2 \fopt \Vert_\Spc{M}$. Therefore, $\fopt$ has the same measurements and regularization cost as $\fcano$, which implies that it is also a solution of~\eqref{eq:noiselessclean}.
\end{proof}

Since $\fopt$ defined in Lemma~\ref{lem:mountain} is a solution to the \eqref{eq:noiselessclean} such that $\fopt \neq \fcano$, the \eqref{eq:noiselessclean} has multiple solutions, which concludes the proof.
\end{proof}

To the best of our knowledge, Proposition~\ref{prop:uniqueness} is a new result. A similar uniqueness result is presented in \cite[Theorem 4.2]{pinkus1988smoothest}, but with more restrictive conditions than item 3. It follows from Proposition~\ref{prop:uniqueness} that when $M =3$, the solution of the~\eqref{eq:noiselessproblem} is always unique because the certificate is always nondegenerate, and is given by $\fcano$. We go much further in Theorem~\ref{theo:sol_set} by providing the full characterization of the solution set when $M\geq 4$.

\begin{theorem}[Characterization of the Solution Set of the~\eqref{eq:noiselessclean}]
\label{theo:sol_set}
    Let $\x\in \R^M$ be the ordered sampling locations and $\obs \in \R^M$ with $M \geq 4$, and let $\fcano$ and $\etacano$ be the functions defined in Definition \ref{def:canonicalsolutiondef} and Proposition \ref{prop:canocertif} respectively.
   A function $\fopt \in \BV$ is a solution of the~\eqref{eq:noiselessclean} if and only if $\fopt(x_m) = y_{0,m}$ for $m\in\{1,\ldots,M\}$, and the following conditions are satisfied for $m\in\{2,\ldots,M-2\}$
    \begin{enumerate}
        \item $\fopt = \fcano$ in $[x_m,x_{m+1}]$ if $|\etacano| < 1$ in $(x_m,x_{m+1})$;
        \item $\fopt$ is convex in $[x_{m-1},x_{m+2}]$ if $\etacano = 1$ in $[x_m,x_{m+1}]$;
        \item $\fopt$ is concave in $[x_{m-1},x_{m+2}]$ if $\etacano = -1$ in $[x_m,x_{m+1}]$; 
        \item $\fopt = \fcano$ in $(-\infty, x_2)$ and $(x_{M-1}, + \infty)$.
       \end{enumerate}
\end{theorem}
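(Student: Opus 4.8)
The engine of the proof is Proposition~\ref{prop:cns-sol-constrained-fixed-dual-certif} applied with the canonical dual certificate $\etacano$, which is a genuine dual certificate by Proposition~\ref{prop:optimality_cano}. By Proposition~\ref{prop:cns-sol-constrained-fixed-dual-certif}, a function $\fopt\in\BV$ with $\fopt(x_m)=y_{0,m}$ for all $m$ solves~\eqref{eq:noiselessclean} if and only if $\supp(w_+)\subset\satp{\etacano}$ and $\supp(w_-)\subset\satn{\etacano}$, where $w\eqdef\Op{D}^2\fopt$; the whole proof then amounts to translating this support inclusion into the four geometric conditions, using the explicit piecewise-linear shape of $\etacano$. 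As a preliminary I would record that shape: $\etacano$ is affine on each $[x_m,x_{m+1}]$ with $\normi{\etacano}=1$, $\etacano(x_m)=\sign(a_m)$ for $m\in\{2,\dots,M-1\}$, and $\etacano(x_1)=\etacano(x_M)=0$; hence on each open interval $(x_m,x_{m+1})$ exactly one of three cases occurs --- $\etacano\equiv 1$ (which forces $m\in\{2,\dots,M-2\}$ and $a_m,a_{m+1}>0$), $\etacano\equiv -1$ (forcing $a_m,a_{m+1}<0$), or $|\etacano|<1$ throughout. One also checks that $\etacano\equiv 1$ on $[x_m,x_{m+1}]$ implies $\etacano>-1$ on the enlarged open interval $(x_{m-1},x_{m+2})$, and symmetrically with $1$ and $-1$ interchanged, since the affine pieces on $[x_{m-1},x_m]$ and $[x_{m+1},x_{m+2}]$ descend from the value $1$ to a value $\geq -1$ attained only at the far endpoint; and that $|\etacano|<1$ on all of $(-\infty,x_2)$ and $(x_{M-1},+\infty)$.

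For the ``only if'' direction, assume $\fopt$ solves~\eqref{eq:noiselessclean}, so that $w_+$ vanishes on $\{\etacano<1\}$ and $w_-$ on $\{\etacano>-1\}$. Then $w=0$ on $(-\infty,x_2)$, on $(x_{M-1},+\infty)$, and on each $(x_m,x_{m+1})$ where $|\etacano|<1$; on such an interval $\fopt$ is affine, and, being continuous and interpolating the data, it must coincide there with $\fcano$, which gives the first and last bullets. If $\etacano\equiv 1$ on $[x_m,x_{m+1}]$ then $w_-=0$ on $(x_{m-1},x_{m+2})$, hence $w\geq 0$ there and $\fopt$ is convex on $(x_{m-1},x_{m+2})$, and on the closed interval by continuity; the concave case is symmetric. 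This settles the two middle bullets.

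For the converse, assume $\fopt$ interpolates the data and satisfies the four bullets; I must recover $\supp(w_+)\subset\satp{\etacano}$ and $\supp(w_-)\subset\satn{\etacano}$, which I would do by exhibiting, around every $p$ with $\etacano(p)<1$, a neighbourhood on which $w\leq 0$, and, symmetrically, a neighbourhood on which $w\geq 0$ around every $p$ with $\etacano(p)>-1$. When $p$ lies in the interior of some $(x_m,x_{m+1})$ this is immediate: from bullet~1 (or bullet~4 when $m\in\{1,M-1\}$) if $|\etacano|<1$ there, and from bullet~3 if $\etacano\equiv -1$ there. The substantive case is a knot $x_m$, where one must forbid a negative atom of $w$ at $x_m$ whenever $\etacano(x_m)\in\{0,1\}$, and any atom at all when $\etacano(x_m)=0$. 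I would argue by a dichotomy on the signs of the neighbours $a_{m-1},a_{m+1}$: if one of the two adjacent intervals is $1$-saturated, the corresponding instance of bullet~2 places $x_m$ strictly inside a convexity interval and forbids the negative atom; if neither is, then $|\etacano|<1$ on both adjacent open intervals and combining the two adjacent instances of bullet~1 forces $\fopt=\fcano$ on a whole neighbourhood of $x_m$, so that $w$ equals $a_m\dirac{x_m}$ near $x_m$, which has the admissible sign. The cases $\etacano(x_m)=-1$ and $\etacano(x_m)=0$ are handled the same way, using bullet~3 / concavity, respectively the fact that $a_m=0$; once every point is covered, the two support inclusions follow.

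I expect the main obstacle to be this last, knot-by-knot bookkeeping in the converse direction, and within it the boundary indices $m=2$ and $m=M-1$: there bullet~1 is unavailable on one side, and one must splice bullet~4 (which pins $\fopt$ to $\fcano$ on $(-\infty,x_2)$, resp.\ on $(x_{M-1},+\infty)$) with the nearest interior instance of bullet~1 to conclude $\fopt=\fcano$ on a one-sided neighbourhood of the knot. Everything else is a routine passage between the measure-level statement $\ssupp{w}\subset\ssat{\etacano}$ and the function-level properties of affineness, convexity, and concavity, using only that $\fopt$ is continuous with $\Op{D}^2\fopt=w$.
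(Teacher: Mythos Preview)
Your proposal is correct and follows essentially the same route as the paper: both directions are obtained by applying Proposition~\ref{prop:cns-sol-constrained-fixed-dual-certif} with the canonical certificate $\etacano$ and translating the signed-support inclusion $\ssupp{\Op{D}^2\fopt}\subset\ssat\etacano$ into the four geometric bullets via the piecewise-affine structure of $\etacano$. Your converse is in fact more explicit than the paper's---the paper asserts directly that $\Op{D}^2\fopt=0$ on $\satp{\etacano}^c\cap\satn{\etacano}^c$ and $\Op{D}^2\fopt\geq 0$ on $\satp{\etacano}$ without spelling out the knot-by-knot dichotomy you describe---but the underlying argument is the same.
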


%\begin{proofof}
\begin{proof}
Let $\fopt$ be a solution of the~\eqref{eq:noiselessclean}. According to Proposition~\ref{prop:optimality_cano}, $\etacano$ is a dual certificate. According to Proposition \ref{prop:cns-sol-constrained-fixed-dual-certif}, we therefore have that
$	\ssupp {\mathrm{D}^2 \fopt } \subset	\ssat \etacano$, meaning that $\mathrm{D}^2 \fopt  = 0$ on the complement $\ssat \etacano^c$ of $\ssat \etacano$. In particular, we have that $(-\infty, x_2] \subset \ssat \etacano^c$, hence $\fopt$ is linear on this interval. The interpolation constraints $\fopt(x_1) = \fcano (x_1)$ and $\fopt(x_2) = \fcano (x_2)$ then imply that $\fopt = \fcano$ on $(-\infty, x_2] $. The same argument holds for the interval $[x_{M-1}, + \infty)$ and any interval $(x_m, x_{m+1})$ on which $\etacano$ does not saturate.

Assume now that $[x_m,x_{m+1}]\subset \satp \etacano$; that is, $\etacano = 1$ on $[x_m,x_{m+1}]$. 
We use the Jordan decomposition of $\mathrm{D}^2 \fopt = w = w_+ - w_{-}$ where $w_+$ and $w_-$ are positive measures. 
By~\eqref{eq:duality-interpolation-expanded}, we know that $w_{-} = 0$ on  $[x_m,x_{m+1}]$ because its support is included in $\satn{\etacano}$. Hence, on this interval, $\mathrm{D}^2 \fopt = w = w_+$ is a positive measure, implying that $\mathrm{D} \fopt$ is increasing and therefore that $\fopt$ is convex on $[x_m,x_{m+1}]$. Now, if $(x_{m-1},x_{m})\subset \satp{\eta}^c\cap\satn{\eta}^c$ then, as above, $\D^2 \opt f_{|(x_{m-1},x_{m})} = 0$. Otherwise, by continuity of $\etacano$, we have $(x_{m-1},x_{m}) \subset \satp\etacano$ hence $\D^2 \opt f_{|(x_{m-1},x_{m})} \geq 0$. As a result $\fopt$ is convex on $(x_{m-1},x_{m+1}]$. The same argument proves that $\fopt$ is convex on $[x_m, x_{m+2})$, and therefore on the whole interval $(x_{m-1},x_{m+2})$.

Suppose conversely that $\fopt$ satisfies all the conditions of Theorem \ref{theo:sol_set}. Let us prove that it is a solution of the \eqref{eq:noiselessclean}. By Proposition~\ref{prop:cns-sol-constrained-fixed-dual-certif}, we just need to check that $\fopt$ satisfies $\ssupp{\D^2 \fopt}\subset\ssat\etacano$ since by construction, $\fopt(x_m)=y_{0,m}$ . By definition of $\etacano$, we have $\D^2 \fopt = 0$ on $\satp{\etacano}^c\cap\satn{\etacano}^c$ (because $\D^2 \fopt$ is equal to $\fcano$ which is linear on that set). Moreover, $\D^2 \fopt\geq 0$  on $\satp\etacano$ (because by assumption, $\fopt$ is convex on intervals where $\etacano = 1$) and $\D^2 \fopt\leq 0$  on $\satn\etacano$ (because $\fopt$ is concave on intervals where $\etacano = -1$).
This means that $\supp{w_+} \subset \satp{\etacano}$ and $\supp{w_-} \subset \satp{\etacano}$ where $\mathrm{D}^2 \fopt = w_+ - w_{-}$ is again the Jordan decomposition of $\mathrm{D}^2 \fopt$. Finally, as expected, we have that 
\begin{equation}
    \ssupp{\mathrm{D}^2 \fopt} = \supp {w_+} \times \{1\} \cup  \supp{w_-} \times \{-1\} \subset \satp{\etacano} \times \{1\} \cup \satn{\etacano} \times \{-1\} = \ssat \etacano,
\end{equation}
hence $\fopt$ is a solution of the \eqref{eq:noiselessclean}. 
\end{proof}

To illustrate Theorem~\ref{theo:sol_set}, a simple example with $M=4$ data points for which the solution is not unique is given in Figure~\ref{theo:sol_set}. Indeed, the canonical dual certificate saturates at -1 in the interval $[1, 2]$. Therefore, by Theorem~\ref{theo:sol_set}, any function that coincides with $\fcano$ in $\R\setminus [1, 2]$ and that is concave in the interval $[0, 3]$ is a solution. This includes the sparsest solution (with a single knot), as well as non-sparse solutions, \eg with a quadratic regime in $[1, 2]$ as in Figure~\ref{fig:mountain}.

\begin{figure}[t]
\centering
\subfloat[Various solutions]{\includegraphics[width=0.5\linewidth]{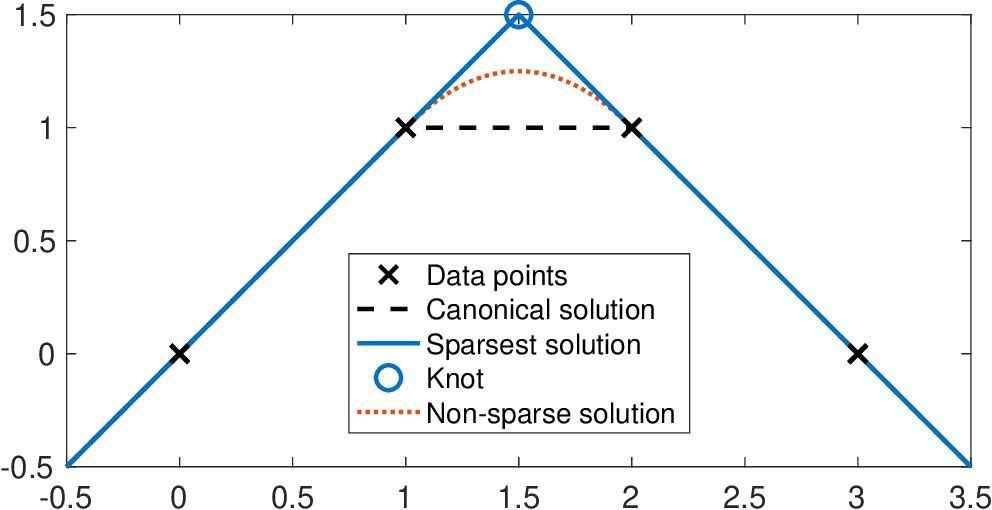}}
\subfloat[Canonical dual certificate]{\includegraphics[width=0.5\linewidth]{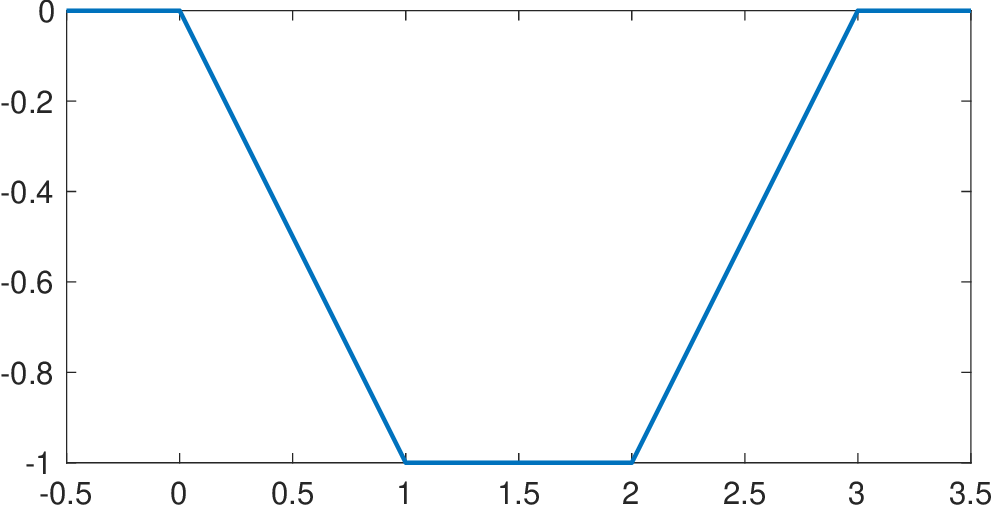}}
\caption{Example with $M=4$ of a non-unique solution ($\etacano$ saturates at -1). An example of a non-sparse solution with a quadratic regime in $[1, 2]$ is given.}
\label{fig:mountain}
\end{figure}

\begin{corollary} \label{coro:uncountable}
    If the~\eqref{eq:noiselessclean} has more than one solution, then it has an uncountable number of solutions.
\end{corollary}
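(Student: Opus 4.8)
The plan is to exhibit, from any two distinct solutions, a whole line segment of solutions, and then upgrade this to an uncountable family by invoking Theorem~\ref{theo:sol_set}. The first observation is that the solution set $\Spc{V}_0$ is convex (this is already recorded in Theorem~\ref{theo:RTweneed}, since $\Spc{V}_\lambda$ is convex for all $\lambda\geq 0$, including $\lambda=0$). Hence if $f_1\neq f_2$ are both solutions, then $f_t\eqdef (1-t)f_1 + t f_2$ is a solution for every $t\in[0,1]$.

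The only thing left to check is that the map $t\mapsto f_t$ is injective on $[0,1]$, so that $\{f_t : t\in[0,1]\}$ is genuinely an uncountable subset of $\Spc{V}_0$. This is immediate: since $f_1\neq f_2$ as elements of $\BV$, there is some point $x_0\in\R$ with $f_1(x_0)\neq f_2(x_0)$ (functions in $\BV$ are continuous by Proposition~\ref{prop:BV2}, so pointwise distinctness is the same as distinctness as functions), and then $f_t(x_0) = (1-t)f_1(x_0) + t f_2(x_0)$ is an affine, non-constant function of $t$, hence injective in $t$. Therefore $t\mapsto f_t$ is injective and $\Spc{V}_0$ contains a continuum.

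I anticipate no real obstacle here: the statement is essentially a soft consequence of convexity of $\Spc{V}_0$ plus the fact that the segment between two distinct points is uncountable. The only point requiring a word of care is that one should not merely say ``$\Spc{V}_0$ is convex, done'' — a convex set with at least two points is automatically uncountable, but it is cleanest to spell out the injectivity of the parametrization so the reader sees why the cardinality jumps all the way to uncountable rather than just ``infinite''. One could alternatively give a more constructive argument in the spirit of Lemma~\ref{lem:mountain}: when the solution is non-unique, Proposition~\ref{prop:uniqueness} gives an index $m$ with $a_m a_{m+1}>0$, and Theorem~\ref{theo:sol_set} then shows that on $(x_{m-1},x_{m+2})$ every convex (or concave) interpolant matching $\fcano$ outside $(x_m,x_{m+1})$ is a solution; sliding the ``mountain'' knot $\tilde\tau$ of Lemma~\ref{lem:mountain} continuously over the open interval $(x_m,x_{m+1})$ already produces uncountably many distinct solutions. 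But the convexity argument is shorter and self-contained, so that is the one I would write.
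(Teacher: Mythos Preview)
Your proof is correct, and in fact your primary argument is more elementary than the paper's. The paper argues by contrapositive through the dual-certificate machinery: if the solution is not unique, Proposition~\ref{prop:uniqueness} says $\etacano$ is degenerate, hence saturates on some $(x_m,x_{m+1})$, and then Theorem~\ref{theo:sol_set} tells you that any function coinciding with $\fcano$ outside and convex/concave inside that region is a solution---an uncountable family. Your main argument sidesteps all of this and uses only the convexity of $\mathcal{V}_0$ recorded in Theorem~\ref{theo:RTweneed}, together with the trivial observation that a segment between distinct points is uncountable. This is the standard soft argument (cf.~Tibshirani's lemma for the LASSO, cited just after the corollary) and is entirely self-contained; the paper's route instead keeps the corollary within the narrative of the certificate analysis, which is thematically consistent but logically heavier.

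Two small remarks on presentation. First, your opening sentence announces that you will ``upgrade this to an uncountable family by invoking Theorem~\ref{theo:sol_set}'', but you never actually invoke it in the main argument---you should drop that clause. Second, your alternative sketch (sliding the knot $\tilde\tau$ from Lemma~\ref{lem:mountain}) is essentially the paper's proof; it is fine to mention it, but note that Lemma~\ref{lem:mountain} as stated constructs a \emph{single} $\tilde\tau$, so to get a continuous family you would need Theorem~\ref{theo:sol_set} (or Theorem~\ref{thm:limitdomain}) rather than the lemma alone.
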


\begin{proof}
If the solution is not unique, then the dual certificate $\etacano$ is degenerate, and therefore saturates over some interval $(x_m,x_{m+1})$. Then, Theorem~\ref{theo:sol_set} characterizes the whole set of solutions, which is clearly uncountably infinite. 
\end{proof}

Corollary~\ref{coro:uncountable} is the continuous counterpart of the well-known fact that the discrete LASSO either admits a unique solution or an uncountable number of solutions~\cite[Lemma 1]{Tibshirani2013lasso}. Even with infinitely many solutions, we are able to delimit the geometric domain that contains the graphs of all solutions by exploiting the local convex/concavity. We recall that $\mathrm{P}_{0,m} = [x_m \ y_{0,m}]^T$ for $m\in\{1,\ldots,M\}$, and that for $\mathrm{A},\mathrm{B}\in \R^2$, we denote by $(\mathrm{A},\mathrm{B})$ the line joining $\mathrm{A}$ and $\mathrm{B}$.
Then, for $M \geq 4$, we consider the set of indices
\begin{equation}
    \label{eq:indicesnoparallel}
    \mathcal{X} \eqdef \mathcal{X}(\x, \obs) \eqdef \left\{m \in \{2, \ldots, M-2\}; \ a_m a_{m+1} > 0 \right\},
\end{equation}
where we recall that $ a_m = \frac{y_{0,m+1}-y_{0,m}}{x_{m+1} - x_m} - \frac{y_{0,m}-y_{0,m-1}}{x_m - x_{m-1}}$ (see~\eqref{eq:a_coefs}). 
The slope condition $a_m a_{m+1}> 0$ in~\eqref{eq:indicesnoparallel} is equivalent to the fact that the lines $(\mathrm{P}_{0,m-1}, \mathrm{P}_{0,m})$ and  $(\mathrm{P}_{0,m+1}, \mathrm{P}_{0,m+2})$ are not parallel (otherwise we would have that $a_m = -a_{m+1}$, hence $a_m a_{m+1} \leq 0$) and that their intersection point, that we denote by $\widetilde{\mathrm{P}}_m= [\tilde{\tau}_m  \ \tilde{y}_m]^T$, is such that $x_m \leq \tilde{\tau}_m \leq x_{m+1}$ according to Lemma \ref{lem:mountain}. %This equivalence can be proved using a barycenter-based argument similar to that used in the proof of Proposition \ref{prop:uniqueness}.
We can thus introduce the triangles $\Delta_m$, whose vertices are the points $\mathrm{P}_{0,m}$, $\widetilde{\mathrm{P}}_m$, and $\mathrm{P}_{0,m+1}$. Theorem \ref{thm:limitdomain} makes the link between the graph of any solution $\fopt\in\BV$ of the~\eqref{eq:noiselessclean}, the graph of $\fcano$ and the triangles $\Delta_m$.

\begin{theorem}[Geometric Domain of the Graph of Solutions of the~\eqref{eq:noiselessclean}]
\label{thm:limitdomain}
Let $\x\in \R^M$ be the ordered sampling locations and $\obs \in \R^M$ with $M \geq 4$.
Then, we have
\begin{equation} \label{eq:domain}
    \cup_{\fopt \in \mathcal{V}_0}  \mathcal{G}(\fopt) = \mathcal{G}(\fcano) \cup \left( \cup_{m \in \mathcal{X}}  \Delta_m \right),
\end{equation}
where $\fcano$ is defined in Definition~\ref{def:canonicalsolutiondef}, $\Spc{X}$ is defined in~\eqref{eq:indicesnoparallel}, and the $\Delta_m$ triangles are defined in the above paragraph.
\end{theorem}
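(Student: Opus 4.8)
<br>

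The plan is to prove the set equality \eqref{eq:domain} by double inclusion, leaning entirely on the pointwise characterization of solutions given in Theorem~\ref{theo:sol_set} and on the explicit construction in Lemma~\ref{lem:mountain}. First I would observe that on the intervals where $|\etacano|<1$ (and on $(-\infty,x_2)$ and $(x_{M-1},+\infty)$), Theorem~\ref{theo:sol_set} forces every solution to coincide with $\fcano$, so the graph of every solution agrees with $\mathcal{G}(\fcano)$ outside the union of the ``saturated'' intervals. It remains to understand what happens on an interval $[x_m,x_{m+1}]$ on which $\etacano$ saturates, which by Proposition~\ref{prop:uniqueness}(3) happens precisely when $m\in\mathcal{X}$, i.e. $a_m a_{m+1}>0$. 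So the whole content of the theorem is local to these indices $m\in\mathcal{X}$, and \eqref{eq:domain} reduces to showing that $\cup_{\fopt\in\mathcal{V}_0}\mathcal{G}(\fopt|_{[x_m,x_{m+1}]}) = \mathcal{G}(\fcano|_{[x_m,x_{m+1}]})\cup\Delta_m$ for each such $m$ (and that the pieces for distinct $m\in\mathcal{X}$ do not interact, which follows because consecutive indices of $\mathcal{X}$ cannot be adjacent when... actually they can be adjacent; more carefully, I would note that each $\Delta_m$ is contained in the vertical strip $[x_m,x_{m+1}]\times\R$, and these strips have disjoint interiors, so the union decomposes strip by strip).

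For the inclusion $\subseteq$ (graph of any solution lies in the right-hand side): fix $m\in\mathcal{X}$ and suppose WLOG $\etacano=-1$ on $[x_m,x_{m+1}]$, so by Theorem~\ref{theo:sol_set} any solution $\fopt$ is concave on $[x_{m-1},x_{m+2}]$ and equals $\fcano$ outside $(x_m,x_{m+1})$, hence in particular $\fopt$ agrees with the line $(\P0{m-1},\P0{m})$ just left of $x_m$ and with the line $(\P0{m+1},\P0{m+2})$ just right of $x_{m+1}$. Concavity on $[x_{m-1},x_{m+2}]$ forces, for $x\in[x_m,x_{m+1}]$, that $\fopt(x)$ lies below both of these two lines (a concave function lies below each of its chords/tangent-type supporting lines extended — more precisely, below the line through two points on its left and below the line through two points on its right) and above the chord $(\P0{m},\P0{m+1})=\fcano|_{[x_m,x_{m+1}]}$. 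The region cut out by ``below both lines $(\P0{m-1},\P0{m})$ and $(\P0{m+1},\P0{m+2})$, above the chord $(\P0{m},\P0{m+1})$, inside the strip $[x_m,x_{m+1}]\times\R$'' is exactly the triangle $\Delta_m$ with vertices $\P0{m}$, $\widetilde{\mathrm{P}}_m$, $\P0{m+1}$, since $\widetilde{\mathrm{P}}_m$ is the intersection of the two lines and lies in the strip by Lemma~\ref{lem:mountain}. So $\mathcal{G}(\fopt)\cap([x_m,x_{m+1}]\times\R)\subset\Delta_m$. The convex case $\etacano=1$ is symmetric (reflect vertically). Assembling over the strips gives $\subseteq$.

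For the reverse inclusion $\supseteq$, I must show that every point of $\mathcal{G}(\fcano)$ and every point of each $\Delta_m$ ($m\in\mathcal{X}$) is hit by the graph of \emph{some} solution. The points of $\mathcal{G}(\fcano)$ are covered since $\fcano\in\mathcal{V}_0$ by Proposition~\ref{prop:optimality_cano}. For a point $(\xi,\nu)\in\Delta_m$ with $m\in\mathcal{X}$: if it lies on the boundary piece $\mathcal{G}(\fcano)$ we are done; otherwise I would exhibit a solution passing through it. The ``mountain'' solution of Lemma~\ref{lem:mountain} already passes through every point of the two edges of $\Delta_m$ emanating from $\widetilde{\mathrm{P}}_m$ (it follows line $(\P0{m-1},\P0{m})$ on $[x_m,\tilde\tau_m]$ and line $(\P0{m+1},\P0{m+2})$ on $[\tilde\tau_m,x_{m+1}]$). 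To reach an interior point $(\xi,\nu)$, I would take a solution that, on $[x_m,x_{m+1}]$, consists of three linear pieces: it follows $(\P0{m-1},\P0{m})$ from $x_m$ up to some knot, then a segment through $(\xi,\nu)$, then $(\P0{m+1},\P0{m+2})$ down to $x_{m+1}$ — choosing the two intermediate knots so that the resulting function is concave (resp. convex) and continuous; such a choice exists precisely because $(\xi,\nu)$ lies inside $\Delta_m$, i.e. strictly below the two outer lines and above the chord. By Theorem~\ref{theo:sol_set} (it equals $\fcano$ outside $(x_m,x_{m+1})$, is concave on $[x_{m-1},x_{m+2}]$, and equals $\fcano$ on all other intervals since $\mathcal{X}$-indices govern disjoint strips) this three-piece function is a solution, and its graph contains $(\xi,\nu)$. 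This establishes $\supseteq$ and hence the theorem.

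The main obstacle I anticipate is the careful bookkeeping for the reverse inclusion: one must verify that the three-piece interpolant can be made concave (resp. convex) through an arbitrary interior point of $\Delta_m$ while matching $\fcano$ with the correct slopes at $x_m$ and $x_{m+1}$, and that simultaneously modifying the function on several disjoint saturated strips (when $\mathcal{X}$ has several elements) still yields a valid solution — this is where one genuinely uses that the strips $[x_m,x_{m+1}]$ for $m\in\mathcal{X}$ have disjoint interiors and that Theorem~\ref{theo:sol_set}'s conditions on index $m$ only constrain $\fopt$ on $[x_{m-1},x_{m+2}]$, with the constraints at the overlapping endpoints being automatically compatible because $\fopt=\fcano$ there. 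The geometric identification of the triangle $\Delta_m$ as the intersection of the three half-planes is the other point needing a clean argument, but it is elementary plane geometry once one notes $\widetilde{\mathrm{P}}_m$ is the apex.
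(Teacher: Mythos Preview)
Your direct inclusion is exactly the paper's argument (the paper treats the convex case $\etacano=1$, you treat the concave case $\etacano=-1$, and the three half-plane inequalities you write down are precisely the paper's \eqref{eq:condition1fordetalm}--\eqref{eq:condition3fordetalm} with signs flipped).

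For the reverse inclusion the paper takes a cleaner route than your three-piece construction. It observes that $\mathcal{V}_0$ is convex (Theorem~\ref{theo:RTweneed}), so for a fixed $x^*\in[x_m,x_{m+1}]$ the set $\{\fopt(x^*):\fopt\in\mathcal{V}_0\}$ is an interval; hence it suffices to realise the \emph{boundary} points of $\Delta_m$. The paper then constructs, for each point $(x^*,y^*)$ on one of the two edges through $\widetilde{\mathrm P}_m$, a two-piece function (linear on $[x_m,x^*]$ along the outer line, linear on $[x^*,x_{m+1}]$ to $\P0{m+1}$) and checks via Theorem~\ref{theo:sol_set} that it is a solution. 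You already noted something slightly sharper: the single mountain solution of Lemma~\ref{lem:mountain} covers \emph{both} apex-adjacent edges at once, and $\fcano$ covers the chord edge. Combining your observation with the convexity of $\mathcal{V}_0$ finishes the reverse inclusion immediately and bypasses the three-piece interpolant you were worried about. Your concern about ``simultaneously modifying several strips'' is also moot for $\supseteq$: you only need \emph{some} solution through each point, so modifying one strip at a time (keeping $\fcano$ elsewhere) suffices.
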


\begin{figure}[t]
\centering
\begin{tikzpicture}
    \node[anchor=south west,inner sep=0] (image) at (0,0,0) {\includegraphics[width=0.5\linewidth]{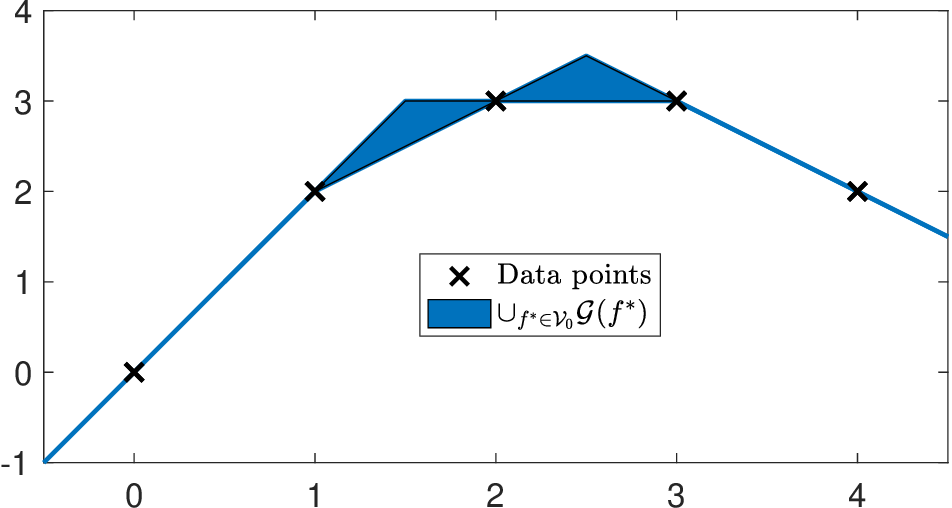}};
    \begin{scope}[x={(image.south east)},y={(image.north west)}]
        \draw (0.45,0.67) node {$\Delta_2$};
        \draw (0.62,0.75) node {$\Delta_3$};
        %\draw (0.31,0.74) node {$\widetilde{\mathrm{P}}_1$};
        %\draw (0.69,0.9) node {$\widetilde{\mathrm{P}}_2$};
    \end{scope}
\end{tikzpicture}
\caption{Example with $M=5$ of the geometric domain $\cup_{\fopt \in \mathcal{V}_0}  \mathcal{G}(\fopt)$ containing all the solutions to the~\eqref{eq:noiselessclean}. We have $\Spc{X} = \{ 2, 3\}$ and thus two triangles $\Delta_m$; all solutions follow $\fcano$ everywhere else.}
\label{fig:graph_location}
\end{figure}

The relation~\eqref{eq:domain} reveals the smallest possible geometric domain containing all the graphs of the solutions of the~\eqref{eq:noiselessclean}. To obtain a solution of the~\eqref{eq:noiselessclean}, one just needs to follow the graph of $\fcano$ outside the triangles $\Delta_m$ and take a convex or concave function inside them. An example of this domain is given in Figure~\ref{fig:graph_location} with $M=5$ and $\#\Spc{X} = 2$ triangles (this same example is treated further later in Figure~\ref{fig:2_sat}). The proof of Theorem~\ref{thm:limitdomain} is given in~\ref{app:limitdomain}. 
Next, Section~\ref{sec:sparsestsolutions} is dedicated to the study of the sparsest piecewise-linear solutions of the~\eqref{eq:noiselessclean}.

% !TEX root = ../main.tex
%
\section{The Sparsest Solution(s) of the \eqref{eq:noiselessclean}}
\label{sec:sparsestsolutions}

    \subsection{Characterization of the Sparsest Solution(s)} \label{sec:sparsenoiseless}

We have already identified the situations where the \eqref{eq:noiselessclean} admits a unique solution, in which case it is the canonical solution introduced in Definition~\ref{def:canonicalsolutiondef}.
When the solution is not unique, Theorem \ref{theo:RTweneed} ensures that the extreme-point solutions are piecewise-linear functions with at most $K-2$ knots, and Theorem \ref{theo:sol_set} gives a complete description of the solution set. In this section, we go further by providing a complete answer to the following questions:
\begin{itemize}
    \item what is the minimal number of knots of a solution of the \eqref{eq:noiselessclean}? 
    \item what are the sparsest solutions, \ie the ones reaching this minimum number of knots?
\end{itemize}
These questions are addressed in Theorem~\ref{thm:sparsest_sol}.
%Let $N_s = \# \{ m \in \{ 2, \hdots, M-1 \}: \etacano(x_m)=0 \}$, and$
Let $\etacano$ be defined as in Proposition \ref{prop:canocertif} for fixed values of $\x, \obs \in \R^M$, and let
\begin{align}
\label{eq:saturation_indices}
\Isat &\eqdef \{ m \in \{ 2, \ldots, M-1 \}: \etacano(x_m)=\pm 1 \text{ and } \etacano(x_{m})\neq \etacano(x_{m-1}) \} \nonumber \\
&= \{ s_1, \ldots, s_{N_s} \} \qwithq s_1 < \cdots < s_{N_s}.
\end{align}
In other words, $N_s = \# \Isat$ corresponds to the number of times $\etacano$ reaches $\pm 1$. Next, let $\alpha_n\in\NN$ for $n\in\{1,\ldots,N_s\}$ be the number of consecutive intervals starting from $x_{s_n}$ in which $\etacano$ saturates at $\pm 1$, \ie
\begin{align}
\label{eq:num_saturations}
\alpha_n \eqdef \min \{ k \in \N: \etacano(x_{s_n + k +1}) \neq \etacano(x_{s_n}) \}.
\end{align}
In what follows, $\lceil x \rceil$ is the smallest integer larger or equal to $x\in \R$.
\begin{theorem}[Sparsest Solutions of the~\eqref{eq:noiselessclean}]
\label{thm:sparsest_sol}
Let $\x\in \R^M$ be the ordered sampling locations, $\obs \in \R^M$ with $M \geq 4$. Concerning the minimum sparsity of a solution of the~\eqref{eq:noiselessclean}, the following hold.
\begin{enumerate}
    \item The lowest possible sparsity (\emph{i.e.}, number of knots) of a piecewise-linear solution of the~\eqref{eq:noiselessclean} is
    \begin{align}
    \sparsity{\x}{\obs} = \sum_{n=1}^{N_s} \left\lceil\frac{\alpha_n+1}{2} \right\rceil,
\end{align} 
where the $\alpha_n$ are defined in~\eqref{eq:num_saturations}, and $N_s = \# \Isat$ where $\Isat$ is defined in~\eqref{eq:saturation_indices}.
    \item There is a unique sparsest solution of the~\eqref{eq:noiselessclean} if and only if none of the $\alpha_n$ are nonzero even numbers.
    \item If one or more $\alpha_n>0$ are even, then there are uncountably many sparsest solutions to the~\eqref{eq:noiselessclean}. The number of degrees of freedom $n_{\mathrm{free}} (\x, \obs)$ of the set of sparsest solutions is equal to the number of even $\alpha_n$ coefficients, that is,
    \begin{equation}
     n_{\mathrm{free}} (\x, \obs) = \sum_{n=1}^{N_s} \One_{2\N_{\geq 1}}(\alpha_n).
    \end{equation}
    More precisely, for each saturation region of $\etacano$, fixing a single knot within a certain admissible segment uniquely determines the other knots within the saturation region.
    %If one or more $\alpha_n>0$ are even, then there are uncountably many sparsest solutions to \eqref{eq:noiselessclean}. The number of degrees of freedom of the set of sparsest solutions is equal to the number of even $\alpha_n$ coefficients
    %More precisely, for each saturation region of $\etacano$, fixing a single knot within a certain admissible segment uniquely determines the other knots within the saturation region.
\end{enumerate}
\end{theorem}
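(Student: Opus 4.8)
The plan is to reduce the global statement to an independent per‑saturation‑region analysis and then solve a small convex‑interpolation problem in each region, using Theorem~\ref{theo:sol_set} and Lemma~\ref{lem:mountain} as the main inputs.

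First I would exploit the structure given by Theorem~\ref{theo:sol_set}. Since $\etacano(x_m)=\sign(a_m)\in\{-1,0,1\}$ for $m\in\{2,\dots,M-1\}$ (with $a_m$ as in~\eqref{eq:a_coefs}), the certificate $\etacano$ saturates at $\pm1$ exactly on the maximal blocks of consecutive sampling points indexed by $\Isat$: the $n$-th block is an interval $[x_{s_n},x_{s_n+\alpha_n}]$ carrying $\alpha_n+1$ sampling points, on which $\etacano\equiv\varepsilon_n\in\{\pm1\}$, and $\fcano$ has a knot at each of these points because the corresponding $a_m$ are nonzero. Outside the blocks (in particular on $(-\infty,x_2)\cup(x_{M-1},+\infty)$, and on every sampling interval where $|\etacano|<1$) every solution coincides with $\fcano$, which is affine there; hence every knot of every solution lies in some block, and the blocks do not interact. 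Consequently $\sparsity{\x}{\obs}$ is the sum over $n$ of the minimal number of knots a solution can have inside the $n$-th block, and the sparsest set is the ``product'' of the per‑block sparsest sets, so it suffices to treat one block. By the symmetry $f\mapsto-f$, $\obs\mapsto-\obs$ (which preserves knot counts and flips $\etacano$) we may assume $\varepsilon_n=+1$: on the enlarged interval $[x_{s_n-1},x_{s_n+\alpha_n+1}]$ a solution $\fopt$ must be convex, must equal the chord through $\mathrm{P}_{0,s_n-1},\mathrm{P}_{0,s_n}$ near the left end and the chord through $\mathrm{P}_{0,s_n+\alpha_n},\mathrm{P}_{0,s_n+\alpha_n+1}$ near the right end, and must interpolate all the data points there; since all $a_m$ in the block are positive, the relevant consecutive slopes strictly increase, so the $\alpha_n+1$ interpolated points are in strictly convex position (the case $\alpha_n=0$ being trivial, as then $\fopt=\fcano$ near the isolated saturation).

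Next I would solve this local problem. Writing $\fopt$ on the enlarged interval as a list of affine segments with strictly increasing slopes, the first and last being the forced chord slopes, two elementary facts suffice: (i) a segment of a convex function lies below the chord joining any two of its points, so, with strict convex position, each segment contains at most two of the $\alpha_n+1$ data points and, if two, consecutive ones; (ii) the two extreme segments each contain only one of these data points. A counting argument (the $\alpha_n+1$ points distributed among the segments, at most one on each extreme segment and at most two on each interior segment, and at least two segments overall) gives the lower bound $\lceil(\alpha_n+1)/2\rceil$ on the number of knots inside the block. For the matching upper bound I would start from $\fcano$ and pair up its knots in the block, merging $x_{s_n+2j}$ with $x_{s_n+2j+1}$ via Lemma~\ref{lem:mountain} (legitimate since both weights are positive); these merges act on pairwise disjoint intervals, hence combine into one admissible solution, with one unmerged knot left exactly when $\alpha_n+1$ is odd. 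Summing over blocks proves item~1. For items~2 and~3 I would revisit the equality case of the bound. When $\alpha_n$ is odd (or $\alpha_n=0$), tightness forces each extreme segment to carry exactly one data point and each interior segment exactly two consecutive ones; the interior data points then number $\alpha_n-1$, which is even, so they decompose uniquely into consecutive pairs, the interior segments are forced to be the corresponding data chords, the extreme segments are forced, and hence all knots (intersections of consecutive segments) are forced: uniqueness. When $\alpha_n$ is even and positive, the bound has exactly one unit of slack, to be spent in one of finitely many ways (one extreme segment carrying no data point, or one interior segment carrying only one); each configuration leaves the slope of a single ``floating'' segment free in an interval, and — using the intersection points $\widetilde{\mathrm P}_m$ of Lemma~\ref{lem:mountain} — these configurations glue along the sampling abscissae into a single connected one‑parameter family, in which one prescribed knot of the solution ranges over a closed admissible segment and determines all the other knots in the block piecewise‑affinely. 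Summing over blocks yields $n_{\mathrm{free}}(\x,\obs)=\#\{n:\alpha_n\in2\N_{\geq1}\}$ and uncountability as soon as one such $n$ exists.

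The main obstacle is the last point: making precise the gluing of the finitely many combinatorial configurations of sparsest solutions in an even block, so that the degrees of freedom genuinely collapse to one per such block and the ``fix one knot, the rest are determined'' statement holds with an explicitly describable admissible segment. The lower bound, the merging construction, and the odd‑$\alpha_n$ uniqueness argument are comparatively routine.
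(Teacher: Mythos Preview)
Your plan is correct and follows essentially the same route as the paper: reduce to independent saturation blocks via Theorem~\ref{theo:sol_set}, obtain the upper bound by pairwise merging of knots using Lemma~\ref{lem:mountain}, and obtain the lower bound by a counting argument. Your lower-bound phrasing (``each affine segment of a convex interpolant carries at most two of the strictly convex data points, and the two extreme segments at most one'') is the contrapositive of the paper's (``there must be a knot in every pair of consecutive saturation intervals, else three data points would be aligned and $a_m=0$''); the two are equivalent, and your version is arguably cleaner since it makes the count $\alpha_n+1\leq 2+2(P-1)$ immediate.

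You are also right to flag the even-$\alpha_n$ case as the delicate point. The paper's proof there only \emph{exhibits} a one-parameter family of sparsest solutions (choose $\widetilde{\mathrm P}_1$ on a segment, then $\widetilde{\mathrm P}_2$ is determined by an intersection, and the remaining knots are fixed barycenters), without explicitly showing that \emph{every} sparsest solution in the block arises this way. Your proposed approach---enumerate the finitely many ways the single unit of slack can be spent, observe each gives a one-parameter family, and check that these families glue along their boundaries into one connected arc---is more complete in this respect and is exactly what is needed to justify the ``fixing one knot determines the rest'' claim in item~3.
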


The proof of Theorem \ref{thm:sparsest_sol} is given in \ref{sec:sparsest_solutions_proof}. Illustrations of its items 2. and 3. with a single saturation region (\ie $N_s=1$) are given in Figures \ref{fig:3_sat} and \ref{fig:2_sat} respectively. In Figure \ref{fig:3_sat}, the unique sparsest solution is shown. In Figure \ref{fig:2_sat}, any point $\widetilde{\mathrm{P}}_1$ in the segment that connects the points $\P0{2}$ and $\widetilde{\mathrm{P}}$ yields one of the sparsest solutions, with a uniquely determined second knot $\widetilde{\mathrm{P}}_2$. In the latter example, there is thus a single degree of freedom $n_{\mathrm{free}}(\x, \obs)$ in the set of sparsest solutions to the~\eqref{eq:noiselessclean}.

\begin{figure}[t]
\centering
\subfloat[Sparsest solution]{\includegraphics[width=0.5\linewidth]{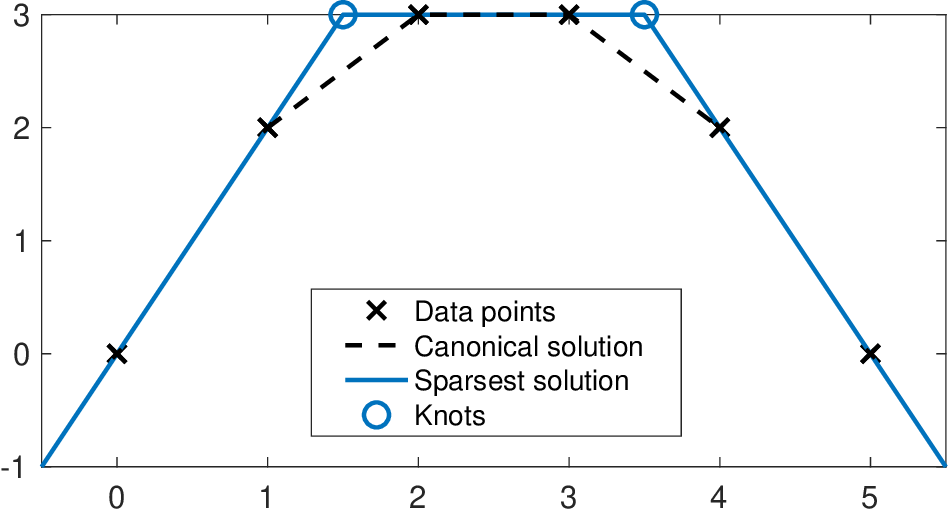}}
\subfloat[Canonical certificate]{\includegraphics[width=0.5\linewidth]{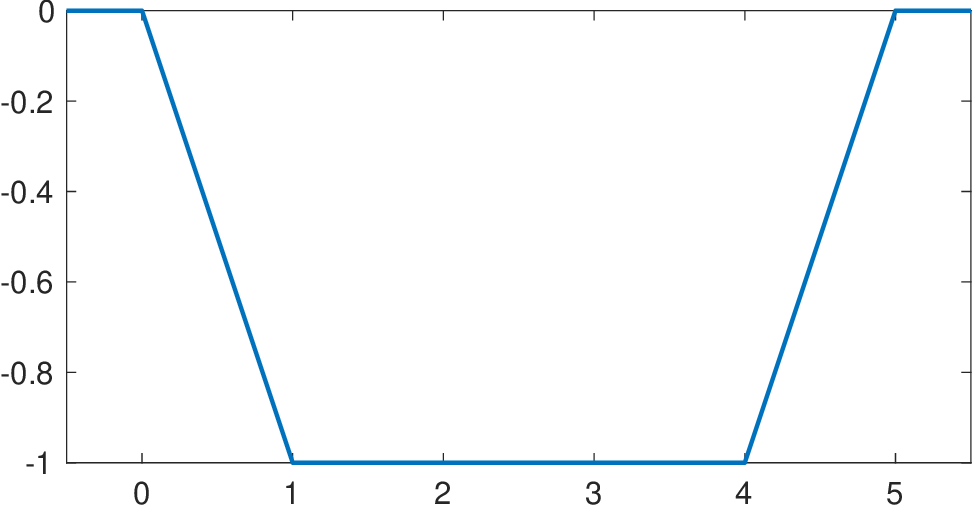}}
\caption{Example with $M=6$ and $\alpha = 3$ consecutive saturation intervals of $\etacano$ at -1. The unique sparsest solution has $P=2$ knots.}
\label{fig:3_sat}
\end{figure}

\begin{figure}[t]
\centering
\subfloat[Example of a sparsest solution]{
\begin{tikzpicture}
    \node[anchor=south west,inner sep=0] (image) at (0,0,0) {\includegraphics[width=0.50\linewidth]{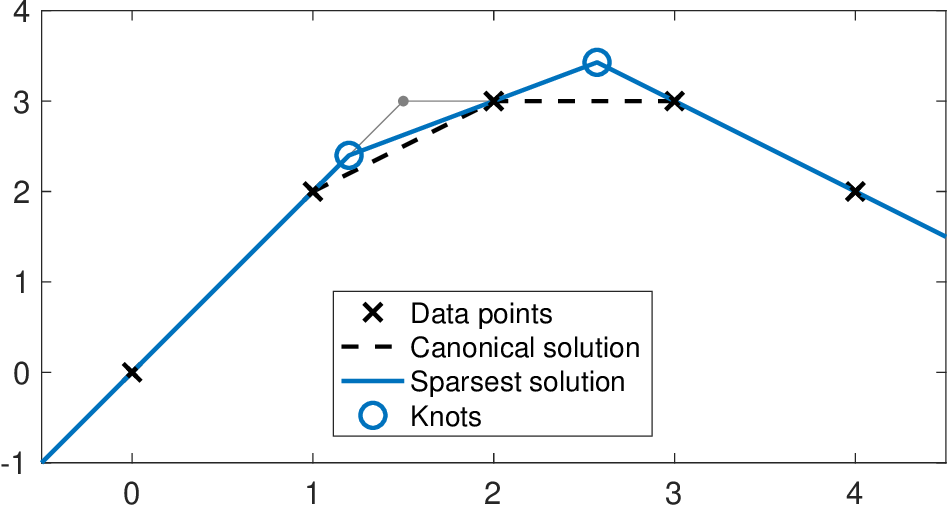}};
    \begin{scope}[x={(image.south east)},y={(image.north west)}]
        \draw[gray] (0.42,0.88) node {$\widetilde{\mathrm{P}}$};
        \draw[matblue] (0.31,0.74) node {$\widetilde{\mathrm{P}}_1$};
        %\draw[dashed,-latex, matblue] (0.65,0.875) -- +(0.7cm,0) node[anchor=west] {$\widetilde{\mathrm{P}}_2$};
        \draw[matblue] (0.69,0.9) node {$\widetilde{\mathrm{P}}_2$};
    \end{scope}
\end{tikzpicture}
}
\subfloat[Canonical certificate]{\includegraphics[width=0.48\linewidth]{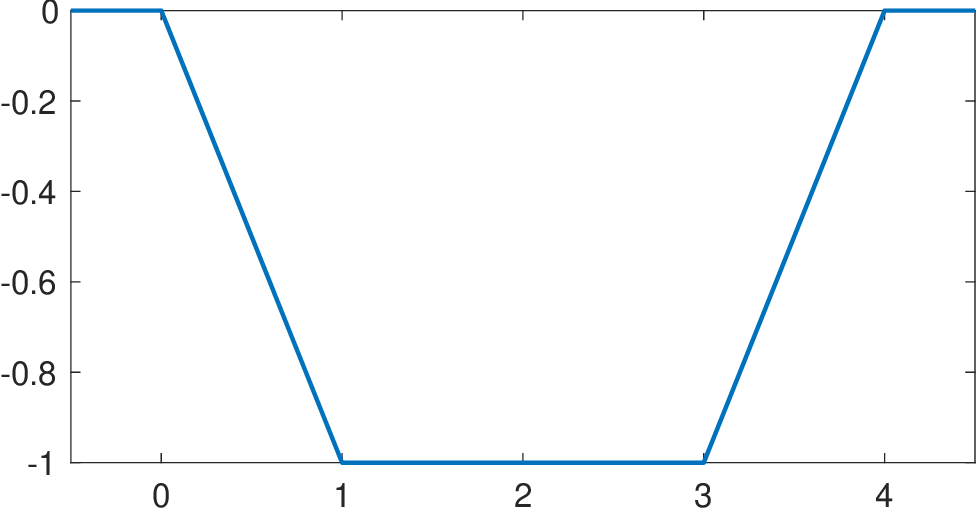}}
\caption{Example with $M=5$ and $\alpha = 2$ consecutive saturation intervals of $\etacano$ at -1. The sparsest solutions have $P=2$ knots.}
\label{fig:2_sat}
\end{figure}

\subsection{Algorithm for Reaching a Sparsest Solution} \label{sec:algonoiseless}
    The results of Theorem \ref{thm:sparsest_sol} suggest a simple yet elegant algorithm for constructing a sparsest solution of the~\eqref{eq:noiselessclean} for given sampling locations $\x = (x_1, \ldots , x_M)$ and data $\obs = (y_{0, 1}, \ldots , y_{0, M})$. The pseudocode is given in Algorithm \ref{alg:algo_constrained}, which applies the sparsifying procedure described in Lemma \ref{lem:construction_sparsest_sol} in every saturation interval. Since the latter is rather lengthy and technical, it is given in \ref{sec:sparsest_solutions_proof} for ease of reading.
    The proof of Theorem \ref{thm:sparsest_sol} guarantees that the output $f^\ast$ of Algorithm \ref{alg:algo_constrained} is indeed a sparsest solution to the~\eqref{eq:noiselessclean}, with sparsity $\sparsity{\x}{\obs}$ as defined in Theorem \ref{thm:sparsest_sol}.
    The following observations can be made concerning Algorithm \ref{alg:algo_constrained}.

\begin{algorithm}[t]
 \KwIn{$\x, \obs$}
 compute $a_1, \ldots a_M$ defined in \eqref{eq:a_coefs}; % \tcp*{Weights of $\fcano$}
 $[\etacano(x_1), \ldots, \etacano(x_M)] = [0, \sign(a_2), \ldots, \sign(a_{M-1}), 0]$;\\
compute $N_s$, $s_1$, $\ldots$, $s_{N_s}$ and $\alpha_1$, $\ldots$, $\alpha_{N_s}$ defined in \eqref{eq:saturation_indices} and \eqref{eq:num_saturations};\\
$\hat{\V \tau} = [\,]; \hat{\V a} = [\,]$; \\
\For{ $n \leftarrow 1$ \KwTo $N_s$}{
    $P \leftarrow \lceil\frac{\alpha_n+1}{2} \rceil$ ; \\% \tcp*{Minimum sparsity in $[x_{s_n}, x_{s_n+\alpha_n}]$}\\
  compute $\tilde{\tau}_1$, $\ldots$, $\tilde{\tau}_P$ and $\tilde{a}_1$, $\ldots$, $\tilde{a}_P$ using
 \eqref{eq:sparsest_sol_odd} or \eqref{eq:sparsest_sol_even};\\
  $\hat{\V \tau} \leftarrow  [\hat{\V \tau}, \tilde{\tau}_1, \ldots, \tilde{\tau}_P]$; \\ % \tcp*{Vector of knot locations}
  $\hat{\V a} \leftarrow  [\hat{\V a}, \tilde{a}_1, \ldots, \tilde{a}_P]$; %\tcp*{Vector of weights}
}
\Return $\fopt \leftarrow \sum_{k=1}^{K} \hat{a}_k (\cdot - \hat{\tau}_k)_+$
\vspace{2mm}
\caption{Pseudocode of our algorithm to find a sparsest solution of the~\eqref{eq:noiselessclean}.}
 \label{alg:algo_constrained}
\end{algorithm}
\begin{itemize}
    \item In the cases where the sparsest solution is not unique, the choice of solution specified by \eqref{eq:sparsest_sol_even} (which is \emph{not} the one shown in Figure~\ref{fig:2_sat}) is guided by simplicity. However, it is an arbitrary choice that can be adapted depending on the application.
    \item Notice that the $x_m$ such that $\etacano(x_m) = 0$ need not be included in the vector of knots $\x'$ built in the algorithm, since we have $a_m = 0$. Therefore, there is in fact no knot at $x_m$ in the canonical solution, which implies that the sparsity of $\fcano$ is strictly less than $M-2$. This corresponds to alignment cases of the data points, \ie the points $\P0{m-1}$, $\P0{m}$, and $\P0{m+1}$ are aligned, as illustrated in Figure~\ref{fig:cano_example}.
    %\item Algorithm \ref{alg:algo_constrained} is extremely fast: it takes linear time $\Spc{O}(M)$ with respect to the number of data points. This is a remarkable feature, since it is used to solve a continuous-domain optimization problem exactly. In fact, it is in the same complexity class as the simple computation of $\fcano$.
    \item Algorithm \ref{alg:algo_constrained} can be translated into an online algorithm, \ie an updated solution can be computed efficiently if a new input data point is added. More precisely, when a new data point $\P0{M+1}$ is added, the reconstructed signal is at worst only modified in the saturation interval $I = [x_{s_{n}-1}, x_{s_n + \alpha_n}]$ if $x_{M+1} \in I$. Since in practice, we usually have $\alpha_n \ll M$, the computational complexity of updating the solution is typically much smaller than rerunning the complete offline algorithm.
\end{itemize}

\subsection{Computational Complexity}
Algorithm \ref{alg:algo_constrained} is very fast and memory-efficient; it requires at most two passes through the data points, and thus has linear time and space complexity $\Spc{O}(M)$ with respect to the number of data points. More precisely, computing the canonical interpolant (\ie, the $a_m$ coefficients using~\eqref{eq:a_coefs}) requires about $3M$ subtractions and $M$ divisions, and storing two arrays of size $M$. Next, in the worst-case scenario where $\sign(a_2) = \ldots = \sign(a_{M-1})$, computing the sparsest interpolant (\ie the $\tilde{a}_k$ and $\tilde{x}_k$ coefficients using~\eqref{eq:sparsest_sol_odd} or \eqref{eq:sparsest_sol_even}) requires approximately $M$ multiplications, $M$ additions, $\frac{M}{2}$ divisions and storing two arrays of size $\frac{M}{2}$. Hence, the complete worst-case time complexity for Algorithm~\ref{alg:algo_constrained} requires $4M$ additions, $M$ multiplications and $\frac{3M}{2}$ divisions, and its space complexity is $3M$.

% !TEX root = ../main.tex
%
\section{The Solutions of the~\eqref{eq:noisyclean}}\label{sec:noisy}

We now focus on the~\eqref{eq:noisyclean} problem, in which the interpolation of the data is no longer required to be exact as in Section~\ref{sec:noiseless}, but is formulated as a penalized problem with a regularization parameter $\lambda > 0$. In practice, such problems are typically formulated when we have access to noise-corrupted measurements $\V{y} = \V{y}_0 + \V{n}$ where $\V{n} \in \R^M$ is a noise term. In this case, we solve the following optimization problem
    \begin{equation}\label{eq:noisyclean}
        \Spc{V}_\lambda \eqdef \argmin_{f \in \BV} \sum_{m=1}^M E(f(x_m), y_{m}) + \lambda \lVert \Op{D}^2 f \rVert_{\mathcal{M}}, \tag{\gBLASSO}
    \end{equation}
    where $E(\cdot, y)$ is a strictly convex, coercive, and differentiable cost function (typically quadratic, \ie $E(z, y) = \frac{1}{2} (z - y)^2$) for any $y \in \R$, and $\lambda > 0$ is a regularization parameter. The latter controls the weight between the data fidelity term $\sum_{m=1}^M E(f(x_m), y_{m})$ and the regularization term $\Vert \Op{D}^2 f \Vert_{\Spc{M}}$, and should therefore be adapted to the noise level.

    \subsection{From the~\eqref{eq:noiselessclean} to the~\eqref{eq:noisyclean}: Reduction to the Noiseless Case} \label{sec:solutionnoisy}
      
We now show that the~\eqref{eq:noisyclean} can be reduced to an optimization problem of the form~\eqref{eq:noiselessclean} (see~\cite[Theorem 5]{gupta2018continuous}), as is often done in finite-dimensional optimization problems~\cite[Lemma 1]{Tibshirani2013lasso}.

    \begin{proposition}[Reformulation of the~\eqref{eq:noisyclean} as a~\eqref{eq:noiselessclean} Problem]
    \label{prop:penalized_to_constrained}
        Let $\x\in \R^M$ be the ordered sampling locations, and $\V{y}\in \R^M$ with $M \geq 2$. Let $E : \R\times\R \rightarrow \R^+$ be a cost function such that $E(\cdot, y)$ is strictly convex, coercive, and differentiable for every $y \in \R$.
        Then, there exists a unique $\V{y}_\lambda \in \R^M$ such that, for any $\fopt \in \mathcal{V}_\lambda$, $\fopt(x_m) = y_{\lambda, m}$ for all $m\in\{1, \ldots, M\}$. Moreover, we have that the~\eqref{eq:noisyclean} is equivalent to the~\eqref{eq:noiselessclean} with the measurement vector $\V{y}_0 = \V{y}_\lambda$, \ie
        \begin{equation}
        \label{eq:noisy_constrained}
            \mathcal{V}_{\lambda} = 
             \argmin_{\substack{f \in \BV \\ f(x_m) = y_{\lambda, m}, \ m=1, \ldots, M}} \lVert \Op{D}^2 f \rVert_{\Spc{M}}.
        \end{equation}
    \end{proposition}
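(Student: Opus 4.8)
The plan is to first establish that every solution $\fopt \in \mathcal{V}_\lambda$ shares the same sample values $(\fopt(x_1),\ldots,\fopt(x_M))$, and then to identify that common vector as the $\V{y}_\lambda$ that makes~\eqref{eq:noisyclean} coincide with the constrained problem~\eqref{eq:noiselessclean}. For the first part I would argue by strict convexity: suppose $f_1, f_2 \in \mathcal{V}_\lambda$ with $f_1(x_{m_0}) \neq f_2(x_{m_0})$ for some index $m_0$. The objective $J(f) = \sum_m E(f(x_m),y_m) + \lambda\lVert \D^2 f\rVert_\Spc{M}$ is convex on the vector space $\BV$ (the regularization term is a seminorm, hence convex; each $E(\cdot,y_m)$ is convex and composed with the linear evaluation functional $f\mapsto f(x_m)$, which is well-defined and linear on $\BV$), so the midpoint $\tfrac12(f_1+f_2)$ is also a minimizer and $J$ is affine along the segment $[f_1,f_2]$. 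But the term $E(\cdot,y_{m_0})$ is \emph{strictly} convex, so $t\mapsto E\big((1-t)f_1(x_{m_0}) + t f_2(x_{m_0}), y_{m_0}\big)$ is strictly convex whenever $f_1(x_{m_0})\neq f_2(x_{m_0})$; since the remaining $E$-terms are convex and the $\Spc{M}$-seminorm term is convex, $J$ would be strictly convex along $[f_1,f_2]$, contradicting that it is constant (equal to the minimum) there. Hence all solutions agree at every $x_m$; call the common value vector $\V{y}_\lambda = (y_{\lambda,1},\ldots,y_{\lambda,M})$. (Existence of at least one solution, so that $\V{y}_\lambda$ is well-defined, is guaranteed by Theorem~\ref{theo:RTweneed}.)

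Next I would show $\mathcal{V}_\lambda$ equals the constrained solution set in~\eqref{eq:noisy_constrained}. Fix any $\fopt\in\mathcal{V}_\lambda$; by the previous paragraph it satisfies $\fopt(x_m)=y_{\lambda,m}$, so $\fopt$ is feasible for~\eqref{eq:noisy_constrained}. Conversely, let $g\in\BV$ be any feasible point of~\eqref{eq:noisy_constrained}, i.e. $g(x_m)=y_{\lambda,m}$ for all $m$. Then the data-fidelity term is the same for $g$ and for $\fopt$: $\sum_m E(g(x_m),y_m) = \sum_m E(y_{\lambda,m},y_m) = \sum_m E(\fopt(x_m),y_m)$. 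Therefore $J(g) \le J(\fopt)$ if and only if $\lVert \D^2 g\rVert_\Spc{M} \le \lVert \D^2 \fopt\rVert_\Spc{M}$. Since $\fopt$ minimizes $J$ over all of $\BV$, we get $J(\fopt)\le J(g)$, hence $\lVert\D^2\fopt\rVert_\Spc{M}\le\lVert\D^2 g\rVert_\Spc{M}$, so $\fopt$ minimizes $\lVert\D^2\cdot\rVert_\Spc{M}$ over the feasible set of~\eqref{eq:noisy_constrained}; that is, $\fopt$ belongs to the right-hand side of~\eqref{eq:noisy_constrained}. For the reverse inclusion, let $g$ be a minimizer of~\eqref{eq:noisy_constrained}; comparing $g$ with $\fopt$ (which is feasible for the constrained problem) gives $\lVert\D^2 g\rVert_\Spc{M}\le\lVert\D^2\fopt\rVert_\Spc{M}$, hence $J(g)\le J(\fopt) = \min_{\BV} J$, so $g\in\mathcal{V}_\lambda$. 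This establishes~\eqref{eq:noisy_constrained}, i.e. $\mathcal{V}_\lambda$ equals $\mathcal{V}_0$ for the measurement vector $\V{y}_0=\V{y}_\lambda$.

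Finally, uniqueness of $\V{y}_\lambda$: if $\V{y}'$ were another vector with $\fopt(x_m)=y'_m$ for all $\fopt\in\mathcal{V}_\lambda$ and all $m$, then picking a single $\fopt\in\mathcal{V}_\lambda$ (nonempty by Theorem~\ref{theo:RTweneed}) forces $y'_m = \fopt(x_m) = y_{\lambda,m}$ for every $m$, so $\V{y}'=\V{y}_\lambda$. The main obstacle is really just the first paragraph — the separation-of-sample-values argument — where care is needed to invoke strict convexity of $E$ on the \emph{correct} scalar segment and to note that the evaluation functionals $f\mapsto f(x_m)$ are genuinely well-defined on $\BV$ (which is exactly the weak-$*$ continuity of point evaluation recorded after Theorem~\ref{theo:RTweneed}); the remaining inclusions are a routine two-sided comparison once the common sample vector is in hand.
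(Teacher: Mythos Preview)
Your proof is correct and follows essentially the same approach as the paper: a strict-convexity contradiction to force all minimizers to share the same sample vector $\V{y}_\lambda$, followed by the observation that the data-fidelity term is constant on the interpolation-constrained set, so that minimizing $J$ there reduces to minimizing $\lVert \D^2 f\rVert_{\Spc{M}}$. Your two-sided inclusion argument for~\eqref{eq:noisy_constrained} is a bit more explicit than the paper's one-line version, but the substance is identical.
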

    
The proof of Proposition~\ref{prop:penalized_to_constrained} is provided in \ref{sec:penalized_to_constrained}. The implications of this result for our problem are huge: it implies that all the results of Section~\ref{sec:noiseless}---in particular, uniqueness, form the solutions, and sparsest solutions---can be applied to the penalized problem~\eqref{eq:noisyclean}. The only---but crucial---catch is that the samples $\V{y}_\lambda\in\RR^M$ are unknown. Fortunately, the following proposition enables us to compute them through a standard $\ell_1$-regularized discrete optimization.

\begin{proposition}
\label{prop:optizlambda}
Assume that the hypotheses of Proposition~\ref{prop:penalized_to_constrained} are met. Then, the vector $\V{y}_\lambda\in\RR^M$ defined in Proposition~\ref{prop:penalized_to_constrained} is the unique solution of the discrete minimization problem
        \begin{equation}
            \label{eq:optizlambda}
          \V{y}_\lambda =  \argmin_{\V{z}\in\R^M} \sum_{m=1}^M E(z_m, y_m) + \lambda \Vert \M{L} \V{z} \Vert_{1},
        \end{equation}
        where $\M{L} \in \R^{(M-2)\times M}$ is given by 
\begin{align}
\label{eq:matrixL}
\setlength\arraycolsep{2pt}
\M{L} \eqdef \begin{pmatrix} 
v_1 & - (v_1 + v_2) & v_2 & 0 & \cdots & 0 \\
0 & v_2 & - (v_2 + v_3) & v_3  & \ddots & \vdots \\
\vdots & \ddots & \ddots & \ddots  & \ddots & 0 \\
0 & \cdots & 0 & v_{M-2}& - (v_{M-2} + v_{M-1}) & v_{M-1}
\end{pmatrix},
\end{align}
and $\V{v} \eqdef (v_1, \ldots , v_{M-1}) \in \R^{M-1}$ is defined as $v_m \eqdef \frac{1}{x_{m+1} - x_m}$ for $m \in\{1,\ldots, M-1\}$.
%\begin{align}
%\label{eq:matrixL}
%\setlength\arraycolsep{2pt}
%\M{L} = \begin{pmatrix} 
%v_1 & w_1 & v_2 & 0 & \cdots & 0 \\
%0 & v_2 & w_2 & v_3  & \ddots & \vdots \\
%\vdots & \ddots & \ddots & \ddots  & \ddots & 0 \\
%0 & \cdots & 0 & v_{M-2}& w_{M-2} & v_{M-1}
%%\end{pmatrix}.
%\end{align}
%In~\eqref{eq:matrixL}, $\V{v} \in \R^{M-1}$ is defined as $v_m = \frac{1}{x_{m+1} - x_m}$, $m = 1, \ldots , M-1$, and $\V{w} \in \R^{M-2}$ is defined as $w_m = -(v_m + v_{m+1})$, $m = 1, \ldots , M-2$.
\end{proposition}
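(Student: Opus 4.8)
The plan is to realize the discrete functional in~\eqref{eq:optizlambda} as the result of partially minimizing the \eqref{eq:noisyclean} objective over all $f \in \BV$ sharing prescribed sample values, and then to invoke the characterization of $\V{y}_\lambda$ from Proposition~\ref{prop:penalized_to_constrained}. The cornerstone is the identity
\begin{equation}
\label{eq:planidentity}
\min_{\substack{f \in \BV \\ f(x_m) = z_m,\ m=1,\ldots,M}} \mnorm{\Op{D}^2 f} \;=\; \lVert \M{L} \V{z} \rVert_1, \qquad \forall\, \V{z} = (z_1,\ldots,z_M) \in \R^M,
\end{equation}
which states that the value of the \eqref{eq:noiselessclean} problem associated with the data $\V{z}$ is exactly $\lVert \M{L}\V{z}\rVert_1$. (The minimum on the left is attained by Theorem~\ref{theo:RTweneed}.) Once~\eqref{eq:planidentity} is available, the rest is a routine comparison of infima together with a strict-convexity argument.

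\textbf{Proof of~\eqref{eq:planidentity}.} Fix $\V{z} \in \R^M$, let $f_{\V{z}} \in \BV$ be the canonical interpolant associated with the data $\V{z}$ in the sense of Definition~\ref{def:canonicalsolutiondef}, and let $\V{a}(\V{z}) = (a_1(\V{z}),\ldots,a_M(\V{z}))$ be the corresponding coefficients given by~\eqref{eq:a_coefs}. On the one hand, $\Op{D}^2 f_{\V{z}} = \sum_{m=2}^{M-1} a_m(\V{z})\,\delta(\cdot - x_m)$ by~\eqref{eq:canonical_sol} and Definition~\ref{def:splines}, hence $\mnorm{\Op{D}^2 f_{\V{z}}} = \sum_{m=2}^{M-1} |a_m(\V{z})|$. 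On the other hand, a direct comparison of~\eqref{eq:a_coefs} with~\eqref{eq:matrixL}, using $v_m = 1/(x_{m+1}-x_m)$, shows that the $(m-1)$-th entry of $\M{L}\V{z}$ equals $v_{m-1} z_{m-1} - (v_{m-1}+v_m) z_m + v_m z_{m+1} = a_m(\V{z})$ for every $m \in \{2,\ldots,M-1\}$; thus $\M{L}\V{z} = (a_2(\V{z}),\ldots,a_{M-1}(\V{z}))$ and $\lVert \M{L}\V{z}\rVert_1 = \sum_{m=2}^{M-1}|a_m(\V{z})|$. Finally, Proposition~\ref{prop:optimality_cano} (applied with $\obs = \V{z}$) ensures that $f_{\V{z}}$ is a solution of the \eqref{eq:noiselessclean} problem with data $\V{z}$, \ie it attains the minimum in the left-hand side of~\eqref{eq:planidentity}. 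Combining the three facts yields~\eqref{eq:planidentity}.

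\textbf{Matching the optimal values.} Let $J_\lambda$ be the optimal value of~\eqref{eq:noisyclean} (finite, since $\mathcal{V}_\lambda \neq \emptyset$ by Theorem~\ref{theo:RTweneed}) and $\widetilde{J}_\lambda \in [0,\infty)$ the infimum in~\eqref{eq:optizlambda}. For any $\V{z}\in\R^M$, inserting the canonical interpolant $f_{\V{z}}$ into the \eqref{eq:noisyclean} objective and using~\eqref{eq:planidentity} gives $\sum_{m} E(z_m, y_m) + \lambda \lVert \M{L}\V{z}\rVert_1 \ge J_\lambda$, so $\widetilde{J}_\lambda \ge J_\lambda$ after taking the infimum over $\V{z}$. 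Conversely, pick $\fopt \in \mathcal{V}_\lambda$ and set $z_m^\star \eqdef \fopt(x_m)$; since $\fopt$ interpolates $\V{z}^\star$, \eqref{eq:planidentity} gives $\mnorm{\Op{D}^2 \fopt} \ge \lVert \M{L}\V{z}^\star\rVert_1$, whence
\begin{equation}
J_\lambda = \sum_{m=1}^M E(z_m^\star, y_m) + \lambda \mnorm{\Op{D}^2 \fopt} \;\ge\; \sum_{m=1}^M E(z_m^\star, y_m) + \lambda \lVert \M{L}\V{z}^\star\rVert_1 \;\ge\; \widetilde{J}_\lambda .
\end{equation}
Therefore $J_\lambda = \widetilde{J}_\lambda$, both inequalities above are equalities, and $\V{z}^\star$ is a minimizer of~\eqref{eq:optizlambda}. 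By Proposition~\ref{prop:penalized_to_constrained}, $z_m^\star = \fopt(x_m) = y_{\lambda,m}$ for all $m$, so $\V{y}_\lambda$ is a minimizer of~\eqref{eq:optizlambda}.

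\textbf{Uniqueness and main obstacle.} The map $\V{z} \mapsto \sum_{m} E(z_m, y_m)$ is strictly convex on $\R^M$, being a sum of functions each strictly convex in a distinct coordinate, while $\V{z} \mapsto \lambda\lVert \M{L}\V{z}\rVert_1$ is convex; hence the objective of~\eqref{eq:optizlambda} is strictly convex and has at most one minimizer. Combined with the existence established above, this shows that $\V{y}_\lambda$ is the \emph{unique} solution of~\eqref{eq:optizlambda}. The only delicate point in the whole argument is~\eqref{eq:planidentity}: it rests simultaneously on the algebraic observation that $\M{L}$ is precisely the second-order finite-difference operator producing the innovation weights $a_m$ of the canonical interpolant, and on the optimality of that interpolant, which is Proposition~\ref{prop:optimality_cano}; everything else is a routine comparison of infima and a strict-convexity argument.
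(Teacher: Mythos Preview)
Your proof is correct and follows essentially the same approach as the paper: both hinge on the identity $\lVert \Op{D}^2 f_{\V{z}}\rVert_{\mathcal{M}} = \lVert \M{L}\V{z}\rVert_1$ for the canonical interpolant $f_{\V{z}}$, then compare the optimal values of the continuous and discrete problems. The only minor difference is in the uniqueness step: the paper shows that \emph{any} minimizer $\V{z}^\star$ of~\eqref{eq:optizlambda} yields $f_{\V{z}^\star}\in\mathcal{V}_\lambda$ and hence $\V{z}^\star=\V{y}_\lambda$ by Proposition~\ref{prop:penalized_to_constrained}, whereas you first exhibit $\V{y}_\lambda$ as a minimizer and then invoke strict convexity of the discrete objective; both arguments are equally valid and elementary.
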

\begin{proof}
In this proof, we denote by $f_{\V{z}}$ the canonical solution (defined in Definition~\ref{def:canonicalsolutiondef}) of the~\eqref{eq:noiselessclean} with sampling locations $\V{x}$ and data point $\V{y}_0 = \V{z}$. Let us first prove that if $\zopt\in \R^M$ is a solution of problem~\eqref{eq:optizlambda}, then $f_{\zopt}\in\BV$ is a solution of the~\eqref{eq:noisyclean}. We then deduce that for all $m\in\{1,\ldots,M\}$, $z_m=f_{\zopt}(x_m)= y_{\la,m}$ (where the last equality is true thanks to Proposition~\ref{prop:penalized_to_constrained}), which proves the desired result, \ie $\V y_\la = \zopt$ is the unique solution of problem~\eqref{eq:optizlambda}.

Let $\V{z} \in \R^M$. Using Equations~\eqref{eq:canonical_sol} and \eqref{eq:a_coefs}, we have that $\Vert \Op{D}^2 f_{\V{z}} \Vert_\Spc{M} = \sum_{m=2}^{M-1} \vert a_m \vert$, where $a_m = \frac{z_{m+1} - z_m}{x_{m+1} - x_m} - \frac{z_{m} - z_{m-1}}{x_{m} - x_{m-1}}$. Therefore, we have $\Vert \Op{D}^2 f_{\V{z}} \Vert_\Spc{M} = \Vert \M{L} \V{z} \Vert_{1}$, where $\M{L}$ is given by Equation~\eqref{eq:matrixL}. This yields $\sum_{m=1}^M E(f_{\V{z}}(x_m), y_m) + \lambda \Vert f_{\V{z}} \Vert_\Spc{M} = \sum_{m=1}^M E(z_m, y_m) + \lambda \Vert \M{L} \V{z} \Vert_{1}$. Applied to the particular case $\V{z} = \V{y}_\lambda$, we obtain the equality $\sum_{m=1}^M E(y_{\lambda, m}, y_m) + \lambda \Vert \M{L} \V{y}_\lambda \Vert_{1} = \Spc{J}_\lambda$, where $\Spc{J}_\lambda$ is the optimal cost of the~\eqref{eq:noisyclean}, since by Proposition~\ref{prop:optimality_cano}, $f_{\V{y}_\lambda} \in \Spc{V}_\lambda$. This proves that the optimal value of problem~\eqref{eq:optizlambda} is lower or equal than $\Spc{J}_\lambda$.

Next, let $\zopt$ be a solution of problem~\eqref{eq:optizlambda} (which exists due to the coercivity of $E(\cdot, y)$ for any $y \in \R$). We thus have from before that
\begin{align}
\Spc{J}_\lambda \leq \sum_{m=1}^M E(f_{\zopt}(x_m), y_m) + \lambda \Vert \Op{D}^2 f_{\zopt} \Vert_\Spc{M} = \sum_{m=1}^M E(z_m, y_m) + \lambda \Vert \M{L} \zopt \Vert_{1} \leq \Spc{J}_\lambda,
\end{align}
which yields the desired result $f_{\zopt} \in \Spc{V}_\la$.
\end{proof}
\subsection{Algorithm for Reaching a Sparsest Solution of the~\eqref{eq:noisyclean}}
\label{sec:algonoisy}
By combining results from the previous sections, we now formulate the following simple algorithmic pipeline to reach a sparsest solution of the~\eqref{eq:noisyclean}.

\begin{proposition}
\label{prop:algo_complete}
Let $\x\in \R^M$ be the ordered sampling locations and $\V{y} \in \R^M$  with $M \geq 2$, and let $E: \R\times \R \to \R^+$ be a cost function such that $E(\cdot, y)$ is strictly convex, coercive, and differentiable for any $y \in \R$. Let the function $\fopt$ be obtained through the following two-step procedure:
\begin{enumerate}
    \item Compute $\V{y}_\lambda\in\RR^M$ (defined in Proposition~\ref{prop:penalized_to_constrained}) by solving problem~\eqref{eq:optizlambda};
    \item Apply Algorithm~\ref{alg:algo_constrained} with the measurement vector $\V{y}_0 = \V{y}_\lambda$ to compute a sparsest solution $\fopt$ of the~\eqref{eq:noiselessclean} given by Equation~\eqref{eq:noisy_constrained}.
\end{enumerate}
Then, $\fopt$ is one of the sparsest solutions to the~\eqref{eq:noisyclean}, with sparsity $\sparsity{\x}{\V{y}_\lambda}$ as defined in Theorem~\ref{thm:sparsest_sol}.
\end{proposition}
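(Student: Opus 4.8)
\textbf{Proof proposal for Proposition~\ref{prop:algo_complete}.}

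The plan is to chain together the two reductions established earlier in the paper. First I would invoke Proposition~\ref{prop:penalized_to_constrained}: since $E(\cdot,y)$ is strictly convex, coercive, and differentiable, there is a unique vector $\V{y}_\lambda \in \R^M$ such that every $f^* \in \Spc{V}_\lambda$ satisfies $f^*(x_m) = y_{\lambda,m}$ for all $m$, and moreover $\Spc{V}_\lambda$ coincides exactly with the solution set of the constrained problem~\eqref{eq:noiselessclean} with measurement vector $\V{y}_0 = \V{y}_\lambda$ (Equation~\eqref{eq:noisy_constrained}). Second, Proposition~\ref{prop:optizlambda} guarantees that this very vector $\V{y}_\lambda$ is the unique minimizer of the finite-dimensional $\ell_1$-regularized program~\eqref{eq:optizlambda} with the matrix $\M{L}$ of~\eqref{eq:matrixL}; hence Step~1 of the pipeline indeed outputs $\V{y}_\lambda$.

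Next I would feed $\V{y}_0 = \V{y}_\lambda$ into Algorithm~\ref{alg:algo_constrained}. By the correctness statement already recorded just before Algorithm~\ref{alg:algo_constrained} (justified by the proof of Theorem~\ref{thm:sparsest_sol} in Appendix~\ref{sec:sparsest_solutions_proof}), the output $f^*$ is a sparsest solution of~\eqref{eq:noiselessclean} for the data $(\x, \V{y}_\lambda)$, with exactly $\sparsity{\x}{\V{y}_\lambda}$ knots, where this quantity is the minimal sparsity of any solution of that constrained problem as characterized in item~1 of Theorem~\ref{thm:sparsest_sol}. Combining this with the set equality $\Spc{V}_\lambda = \Spc{V}_0(\x, \V{y}_\lambda)$ from Proposition~\ref{prop:penalized_to_constrained}, we conclude that $f^* \in \Spc{V}_\lambda$ and that $f^*$ has the minimal possible number of knots among all elements of $\Spc{V}_\lambda$, namely $\sparsity{\x}{\V{y}_\lambda}$. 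This is precisely the claimed conclusion.

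The argument is essentially a bookkeeping composition of prior results, so there is no genuine technical obstacle. The one point requiring a word of care is the assertion that the minimal sparsity over $\Spc{V}_\lambda$ equals the minimal sparsity over $\Spc{V}_0(\x,\V{y}_\lambda)$: this is immediate because the two solution sets are literally equal as subsets of $\BV$ (not merely related by a bijection), so any notion of sparsity defined knot-by-knot on a function transfers verbatim. I would also note in passing that the hypotheses needed by Proposition~\ref{prop:penalized_to_constrained} and Proposition~\ref{prop:optizlambda} (strict convexity, coercivity, differentiability of $E(\cdot,y)$, and $M \geq 2$; together with $M \geq 4$ wherever Theorem~\ref{thm:sparsest_sol} is invoked) are exactly those assumed in the statement, or are automatically handled by the degenerate small-$M$ cases where the solution is unique and equals the canonical solution. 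Hence no additional assumptions are introduced.
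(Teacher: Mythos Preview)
Your proposal is correct and follows essentially the same approach as the paper: invoke Proposition~\ref{prop:penalized_to_constrained} for the equivalence $\Spc{V}_\lambda = \Spc{V}_0(\x,\V{y}_\lambda)$, use Proposition~\ref{prop:optizlambda} to justify that Step~1 computes $\V{y}_\lambda$, and then appeal to the correctness of Algorithm~\ref{alg:algo_constrained} (via the proof of Theorem~\ref{thm:sparsest_sol}) to conclude that $\fopt$ is a sparsest solution with sparsity $\sparsity{\x}{\V{y}_\lambda}$. Your write-up is more detailed than the paper's three-sentence proof, but the logical chain is identical.
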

\begin{proof}
Proposition~\ref{prop:penalized_to_constrained} guarantees that the~\eqref{eq:noisyclean} is equivalent to the~\eqref{eq:noiselessclean} with the measurement vector $\V{y}_0 = \V{y}_\lambda$. Proposition~\ref{prop:optizlambda} then specifies that $\V{y}_\lambda$ can be computed by solving problem~\eqref{eq:optizlambda}. Finally, as demonstrated in the proof of Theorem~\ref{thm:sparsest_sol}, the output $\fopt$ of Algorithm~\ref{sec:algonoiseless} reaches a sparsest solution of the corresponding~\eqref{eq:noiselessclean} problem, which thus has sparsity $\sparsity{\x}{\V{y}_\lambda}$.
\end{proof}

Proposition~\ref{prop:algo_complete} proposes a simple but very powerful algorithm. It reaches a sparsest solution of the \eqref{eq:noisyclean} - a challenging task a priori - in two simple steps. The first consists in solving a standard $\ell_1$-regularized discrete problem, for which many off-the-shelf solvers such as ADMM~\cite{boyd2010distributed} are available. The second is our proposed sparsifying procedure, which converges in finite time. The following remarks can be made concerning Proposition~\ref{prop:algo_complete}.
% \begin{remark}
% The computational bottleneck of the pipeline described in Proposition~\ref{prop:algo_complete} is its item 1., as Problem~\eqref{eq:optizlambda} admits no closed-form solution due to the non-differentiable $\ell_1$ term. It is thus typically solved using an iterative procedure that does not converge in finite time, such as ADMM. By contrast, as explained in Section~\ref{sec:algonoiseless}, item 2. requires a finite number $\Spc{O}(M)$ of operations to reach an exact solution.
% \end{remark}

\begin{remark}
 Algorithm~\ref{alg:algo_constrained} still converges to a solution of the~\eqref{eq:noisyclean} when $E$ is only a convex function, and not strictly convex as assumed in Propositions~\ref{prop:penalized_to_constrained} and~\ref{prop:optizlambda}. The difference is that Proposition~\ref{prop:penalized_to_constrained} no longer holds true in that there is no unique vector of measurements $\V{y}_\lambda$. The solution set of the constrained problem~\eqref{eq:noisy_constrained} is thus in general a strict subset of $\Spc{V}_\lambda$. Hence, the obtained solution is not necessarily the sparsest solution of the full solution set $\Spc{V}_\lambda$, but only of this subset.
 
 As for the assumption that $E$ is differentiable, it is not a requirement for Proposition~\ref{prop:algo_complete}. However, as it is needed later on in Proposition~\ref{prop:linear_regression}, we include it in order to have consistent assumptions concerning $E$ throughout the paper.
  \end{remark}

\subsection{Computational Complexity}
The computational bottleneck of the pipeline described in Proposition~\ref{prop:algo_complete} is its item 1; as an illustration, for $M=50$ data points, item 1 runs in about 200ms on commodity hardware, compared to 2ms for item 2. This gap is due to the absence of a closed-form solution to Problem~\eqref{eq:optizlambda} owing to the non-differentiable $\ell_1$ term. The latter is thus typically solved using an iterative procedure that does not converge in finite time, such as ADMM. It is well known that ADMM has a $\mathcal{O}(1/k)$ convergence rate in general, where $k$ is the number of iterations~\cite{he2012o1/n}. In our case, when $E$ is strongly convex with Lipschitz-continuous gradient, \eg with a standard quadratic loss, ADMM achieves a linear convergence rate \cite{deng2015global}. In general, the cost per iteration of ADMM depends on how the $\V z$-minimization step is performed, which may depend on the choice of $E$. In the standard quadratic case, this step consists in applying the inverse of an $M \times M$ matrix, which is fixed across iterations, to an iteration-dependant vector. To achieve this, the inverse matrix must either be computed beforehand (which is our approach), or this inverse must by applied in a matrix-free fashion. In our approach, the computational bottleneck at each iteration being the storage of the inverse matrix and its application to a vector, the computational complexity per iteration of ADMM is $\Spc O (M^2)$ both in time and space.

% \texorpdfstring{tex}{pdfbookmark} removes the warning about the maths symbol in the title that will not show up in in the PDF bookmarks.
\subsection[\texorpdfstring{tex}{pdfbookmark}]{Range of the Regularization Parameter $\lambda$}
\label{sec:range_lambda}
In practice, the choice of the regularization parameter $\lambda$ is the critical element that determines the performance of our algorithm. Although this choice is highly data-dependant, in this section, we show that the search can be restricted to a bounded interval. The lower bound is $\lambda \to 0$, which corresponds at the limit to exact interpolation, that is the~\eqref{eq:noiselessclean}. The upper bound $\lambda \to +\infty$ corresponds to the linear regression regime, which is described in the following proposition.
\begin{proposition}[Linear Regression Regime of the~\eqref{eq:noisyclean}]
\label{prop:linear_regression}
Let $\x\in \R^M$ be the ordered sampling locations and $\V{y} \in \R^M$ with $M \geq 2$. Let $E: \R\times \R \to \R^+$ be a cost function such that $E(\cdot, y)$ is strictly convex, coercive, and differentiable for any $y \in \R$.  Then, the following properties hold.
\begin{enumerate}
    \item There is a unique solution $(\opt \beta_0, \opt \beta_1) \in \R^2$ to the linear regression problem
\begin{align}
\label{eq:linear_regression}
( \opt \beta_0, \opt \beta_1) \eqdef \argmin_{(\beta_0, \beta_1) \in \R^2} \sum_{m=1}^M E(\beta_0 + \beta_1 x_m, y_m).
\end{align}
\end{enumerate}
We can thus define the value
\begin{align}
\label{eq:lambda_max}
\lambda_{\text{max}} \eqdef \left\Vert {\M{L}^T}^\dagger \begin{pmatrix}\partial_1 E(\opt \beta_0 + \opt \beta_1 x_m, y_1) \\ \vdots \\ \partial_1 E(\opt \beta_0 + \opt \beta_1 x_M, y_M) \end{pmatrix} \right\Vert_\infty,
\end{align}
where $\partial_1 E$ denotes the partial derivative with respect to the first variable of $E$, the matrix ${\M{L}^T}^\dagger$ denotes the pseudoinverse of $\M{L}^T$, and $\M{L}$ is defined as in~\eqref{eq:matrixL}.
\begin{enumerate}
 \setcounter{enumi}{1}
\item For any $\lambda \geq \lambda_{\text{max}}$, the solution to the discrete problem~\eqref{eq:optizlambda} is given by $\V{y}_\lambda = \opt \beta_0 \V{1} + \opt \beta_1 \x$, where $\V{1} \eqdef (1, \ldots , 1) \in \R^M$.
\item For any $\lambda \geq \lambda_{\text{max}}$, the solution to the~\eqref{eq:noisyclean} is unique and is the linear function $f_{\text{max}}$ given by $f_{\text{max}}(x) \eqdef \opt \beta_0 + \opt \beta_1 x$.
\end{enumerate}

\end{proposition}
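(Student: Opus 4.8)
The plan is to prove the three items in order, leveraging the reformulations already established. For item~1, I would argue that the linear regression functional $(\alpha,\beta)\mapsto \sum_{m=1}^M E(\alpha+\beta x_m, y_m)$ is strictly convex and coercive on $\R^2$. Strict convexity follows from the strict convexity of each $E(\cdot, y_m)$ composed with the injective linear map $(\alpha,\beta)\mapsto (\alpha+\beta x_1,\ldots,\alpha+\beta x_M)$ (injective because at least two of the $x_m$ are distinct, so the map has rank $2$). Coercivity follows similarly from the coercivity of each $E(\cdot, y_m)$ together with the fact that $\|(\alpha+\beta x_1,\ldots,\alpha+\beta x_M)\|\to\infty$ as $\|(\alpha,\beta)\|\to\infty$. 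Strict convexity plus coercivity on a finite-dimensional space gives existence and uniqueness of the minimizer $(\opt\alpha,\opt\beta)$.

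For item~2, I would use the optimality conditions for the discrete problem~\eqref{eq:optizlambda}. Recall from Proposition~\ref{prop:optizlambda} that $\V y_\lambda = \argmin_{\V z} \sum_m E(z_m,y_m) + \lambda\|\M L\V z\|_1$. The candidate $\V z^\star = \opt\alpha\V 1 + \opt\beta\x$ satisfies $\M L\V z^\star = \V 0$, since the rows of $\M L$ annihilate affine sequences (indeed $\M L$ is built from second finite differences, so $\M L\V 1 = \M L\x = \V 0$). Thus at $\V z^\star$ the regularization term vanishes and is locally $\V z\mapsto \lambda\|\M L\V z\|_1$, whose subdifferential at $\V z^\star$ is $\lambda\,\M L^T[-1,1]^{M-2}$. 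The first-order optimality condition for $\V z^\star$ to be a (hence the, by strict convexity) minimizer of~\eqref{eq:optizlambda} is that $-\nabla_{\V z}\big(\sum_m E(z_m,y_m)\big)\big|_{\V z^\star}$ lies in $\lambda\,\M L^T[-1,1]^{M-2}$, i.e. there exists $\V q\in[-1,1]^{M-2}$ with $\lambda\,\M L^T\V q = -\V g$, where $\V g = (\partial_1 E(\opt\alpha+\opt\beta x_m, y_m))_{m=1}^M$. Now the key observation: because $(\opt\alpha,\opt\beta)$ is optimal for~\eqref{eq:linear_regression}, the gradient $\V g$ is orthogonal to $\V 1$ and $\x$ (these are exactly the stationarity equations $\sum_m \partial_1 E = 0$ and $\sum_m x_m\,\partial_1 E = 0$), hence $\V g$ lies in the range of $\M L^T$ (which is the orthogonal complement of $\ker\M L = \mathrm{span}\{\V 1,\x\}$). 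Therefore $\M L^T\V q = -\V g/\lambda$ is solvable, and the minimal-norm solution is $\V q = -{\M L^T}^\dagger\V g/\lambda$; it satisfies $\|\V q\|_\infty\le 1$ precisely when $\lambda\ge \|{\M L^T}^\dagger\V g\|_\infty = \lambda_{\mathrm{max}}$. (Any solution $\V q$ with $\|\V q\|_\infty\le1$ suffices; choosing the minimal-$\ell_\infty$-norm solution shows $\lambda_{\mathrm{max}}$ is the exact threshold, though for the statement only the $\le$ direction is needed.) This establishes item~2.

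For item~3, I would combine item~2 with Proposition~\ref{prop:penalized_to_constrained} and the uniqueness results of Section~\ref{sec:noiseless}. By Proposition~\ref{prop:penalized_to_constrained}, $\Spc{V}_\lambda$ coincides with the solution set of~\eqref{eq:noiselessclean} with data $\V y_0 = \V y_\lambda$; for $\lambda\ge\lambda_{\mathrm{max}}$, item~2 gives $\V y_\lambda = \opt\alpha\V1 + \opt\beta\x$, i.e. the data points $(x_m, y_{\lambda,m})$ are collinear, lying on the line $x\mapsto \opt\alpha+\opt\beta x$. Then every coefficient $a_m$ in~\eqref{eq:a_coefs} (the second finite differences of an affine sequence) vanishes, so the canonical interpolant $\fcano$ is exactly the affine function $f_{\mathrm{max}}(x) = \opt\alpha+\opt\beta x$, with $\|\Op D^2\fcano\|_{\Spc M} = 0$. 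Since the regularization is nonnegative and $f_{\mathrm{max}}$ achieves the value $0$ while interpolating the data, it is a solution; and any solution $f$ must also satisfy $\|\Op D^2 f\|_{\Spc M} = 0$, forcing $\Op D^2 f = 0$, hence $f$ affine, hence $f = f_{\mathrm{max}}$ by the two (or more) interpolation constraints. Thus the solution is unique and equals $f_{\mathrm{max}}$.

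The main obstacle I anticipate is item~2: getting the subdifferential/optimality-condition bookkeeping exactly right, in particular verifying that $\ker\M L = \mathrm{span}\{\V1,\x\}$ (so that $\M L$ has full row rank $M-2$ and $\mathrm{range}(\M L^T) = (\ker\M L)^\perp$, which makes ${\M L^T}^\dagger$ behave as expected) and carefully identifying $\mathrm{range}(\M L)\ni$ the constraint that lets the dual variable $\V q$ be chosen with $\|\V q\|_\infty\le1$. The structure of $\M L$ in~\eqref{eq:matrixL} with the strictly positive weights $v_m$ makes the full-rank claim routine but it must be checked. The rest (items~1 and~3) is comparatively mechanical once the equivalences and the earlier propositions are invoked.
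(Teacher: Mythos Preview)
Your proposal is correct and follows essentially the same approach as the paper: strict convexity and coercivity for item~1, Fermat's rule on the discrete problem with the key orthogonality $\V g\perp\mathrm{span}\{\V 1,\x\}=\ker\M L$ for item~2, and reduction to the constrained problem with collinear data for item~3. Two minor remarks: in your parenthetical about the ``minimal-norm'' $\V q$, note that since $\M L$ has full row rank $M-2$ the map $\M L^T$ is injective, so the solution $\V q$ to $\M L^T\V q=-\V g/\lambda$ is in fact unique (there is no norm choice to make); and for item~3 the paper instead observes that $\etacano\equiv 0$ is nondegenerate and invokes Proposition~\ref{prop:uniqueness}, whereas your direct argument from $\lVert\Op D^2 f\rVert_{\Spc M}=0$ is equally valid and slightly more elementary.
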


The proof of Proposition~\ref{prop:linear_regression} is given in \ref{sec:linear_regression_proof}. %Contrary to the exact interpolation case, linear regression~\eqref{eq:linear_regression} is not a limit case of \eqref{eq:noisyclean}:
%In fact, there exists an upper-bound value for $\lambda$ above which \eqref{eq:noisyclean} amounts to linear regression, \ie its unique solution is the linear function determined by~\eqref{eq:linear_regression}.
Proposition~\ref{prop:linear_regression} guarantees that the range of $\lambda$ can be restricted to the interval $(0, \lambda_{\text{max}}]$: indeed, all values $\lambda \geq \lambda_{\text{max}}$ lead to linear regression. Moreover, the value of $\lambda_{\text{max}}$ given in \eqref{eq:lambda_max} only depends on the data $\x, \V{y} \in \R^M$ and is easy to compute numerically - the most costly step being the computation of the pseudoinverse ${\M{L}^T}^\dagger$. Note that item 2 in Proposition~\ref{prop:linear_regression}, which stems from duality theory, is a generalization of a well-known result for the LASSO problem~\cite[Proposition 1.3]{bach2011optimization}, which plays a crucial role in the homotopy method~\cite{osborne2000lasso}. The difference here is the presence of a non-invertible regularization matrix $\M{L}$ in problem~\eqref{eq:optizlambda}, which requires additional arguments in the proof.

% !TEX root = ../main.tex
%
\section{Experiments}
\label{sec:experiments}

In this section, we describe the implementation of our two-step algorithm presented in Section \ref{sec:algonoisy} and show our experimental results. The first step of our algorithm - which consists in solving problem \eqref{eq:optizlambda} with ADMM - is implemented using GlobalBioIm, a Matlab inverse-problem library developed by the Biomedical Imaging Group at EPFL \cite{soubies2019pocket}. In all our experiments, we choose the standard quadratic data fidelity loss $E(z, y) = \frac{1}{2}(z-y)^2$. This choice leads to $\partial_1E(z, y) = z-y$, which enables the simple computation of $\lambda_{\text{max}}$ using \eqref{eq:lambda_max}.

We present an illustrative example with $M=30$ simulated data points in Figure \ref{fig:cost_sparsity}. A small number is chosen for visualization purposes; an application of our algorithm with a larger number of $M=200$ data points was shown in Figure \ref{fig:regression_vs_NN}. The sampling locations $x_m$ are generated following a uniform distribution in the $[\frac{m-1}{M}, \frac{m}{M}]$ intervals for $m=1, \hdots, M$. Next, the ground-truth signal, a piecewise-linear spline $f_0$ in the sense of Definition~\ref{def:splines} with 2 knots, is generated, with random knot locations $\tau_m$ within the interval $[0,1]$, and i.i.d. Gaussian amplitudes $a_m$ ($\sigma_a^2 = 1$). We then have $y_m = f_0(x_m) + n_m$ for $m=1, \hdots, M$, where $\V{n} \in \R^M$ is i.i.d. Gaussian noise ($\sigma_n^2 = 4 \times 10^{-4}$).

% \texorpdfstring{tex}{pdfbookmark} removes the warning about the maths symbol in the title that will not show up in in the PDF bookmarks.
\subsection[\texorpdfstring{tex}{pdfbookmark}]{Extreme Values of $\lambda$}
The reconstructions using our algorithm for extreme values of $\lambda$ - \ie $\lambda \to 0$ which leads to exact interpolation of the data, and $\lambda = \lambda_{\text{max}}$ which leads to linear regression - are shown in Figure \ref{fig:cost_sparsity_extremes}. Clearly, none of these solutions are satisfactory: on one hand, linear regression is too simple to model the data adequately. On the other hand, the exact interpolator suffers from overfitting. Although thanks to the sparsification procedure in Algorithm~\ref{alg:algo_constrained}, its sparsity $\sparsity{\x}{\obsr} = 20$ is smaller than the theoretical bound $M-2 = 28$ given by Theorem \ref{theo:RTweneed}, it is still clearly much larger than the desired outcome.

\subsection{Sparsity \vs Data Fidelity Loss Trade-Off}

Next, we show the sparsity $\sparsity{\x}{\obsr}$ \vs error $\Vert \V{y} - \obsr \Vert$ trade-off curve in Figure \ref{fig:cost_sparsity_tradeoff}. The latter was obtained by applying our algorithm with 20 values of $\lambda$ (equispaced on a logarithmic scale) within the range $[\lambda_{\min}, \lambda_{\text{max}}]$, with $\lambda_{\text{max}} = 0.1713$ (as defined in~\eqref{eq:lambda_max}) and $\lambda_{\min} \eqdef 10^{-5} \times \lambda_{\text{max}} $. We thus observe the evolution from exact interpolation to linear regression as $\lambda$ increases.

Ideally, one would like to choose to value of $\lambda$ that minimizes $\Vert \V{y}_0 - \obsr \Vert$, \ie the error with respect to the noiseless data $\V{y}_0$. However, in practice, the noiseless data is unknown, and one must use the noisy data $\V{y}$. Depending on the noise level, solely minimizing $\Vert \V{y} - \obsr \Vert$ might not be a desirable objective, since it leads to overfitting. Hence, we consider the trade-off between data fidelity loss and sparsity as a proxy for the standard universality \vs simplicity trade-off in machine learning. Note that we choose the data fidelity loss $\Vert \V{y} - \obsr \Vert$ instead of $\lambda$ as the $x$-axis metric, since it is an increasing function of the latter, and the former is easier to interpret.

This trade-off curve does not specify a single optimal value of the regularization parameter $\lambda$. Instead, it helps the user choose an appropriate balance by giving quantitative, interpretable data about the possible trade-offs. A key observation is that this curve is not necessarily monotonous: the sparsity can increase as $\Vert \V{y} - \obsr \Vert$ increases, as shown in Figure \ref{fig:cost_sparsity_tradeoff}. This lack of monotonicity is rather counter-intuitive, since the overall trend as $\lambda$ increases is to go from sparsity $\sparsity{\x}{\V{y}} = 20$ to $\sparsity{\x}{\V{y}_{\lambda_{\text{max}}}} = 0$. Note that a similar behavior has been known to occur in the context of the homotopy method \cite{bach2011optimization}, although it is far from being systematic. However, the interesting feature is that, in the sparsity \vs error trade-off, some values of $\lambda$ are sometimes strictly better than others for both metrics, such as the star point over the square point in Figure~\ref{fig:cost_sparsity_tradeoff}. Having access to the full trade-off curve such as Figure \ref{fig:cost_sparsity_tradeoff} is very helpful to judiciously select a suitable value of $\lambda$. This holds true as well when the curve is monotonic: indeed, the user should select the value of $\lambda$ such that the data fidelity is lowest for the desired level of sparsity, \ie the leftmost point of every plateau.

\subsection{Example Reconstructions}
To illustrate the non-monotonicity of the sparsity \vs error curve, examples of reconstructions for two specific values of $\lambda$ are shown in Figures \ref{fig:cost_sparsity_sparse} and \ref{fig:cost_sparsity_nonsparse}. Indeed, the former reconstruction has a lower value of $\lambda$, and thus lower data-fidelity loss. Nevertheless, the reconstruction in Figure \ref{fig:cost_sparsity_sparse} is sparser, with $\sparsity{\x}{\obsr} = 3$ \vs 6 in Figure \ref{fig:cost_sparsity_nonsparse}. Note that this gap is not a numerical artefact, since the magnitude of the weights $\tilde{a}_k$ associated to the knots in Figure \ref{fig:cost_sparsity_nonsparse} is much greater than numerical precision. This indicates that the value of $\lambda$ for Figure \ref{fig:cost_sparsity_sparse} should be preferred to that of \ref{fig:cost_sparsity_nonsparse}.

\begin{figure}[t]
\centering
\subfloat[Extreme cases.]{\includegraphics[width=0.49\linewidth, valign=t]{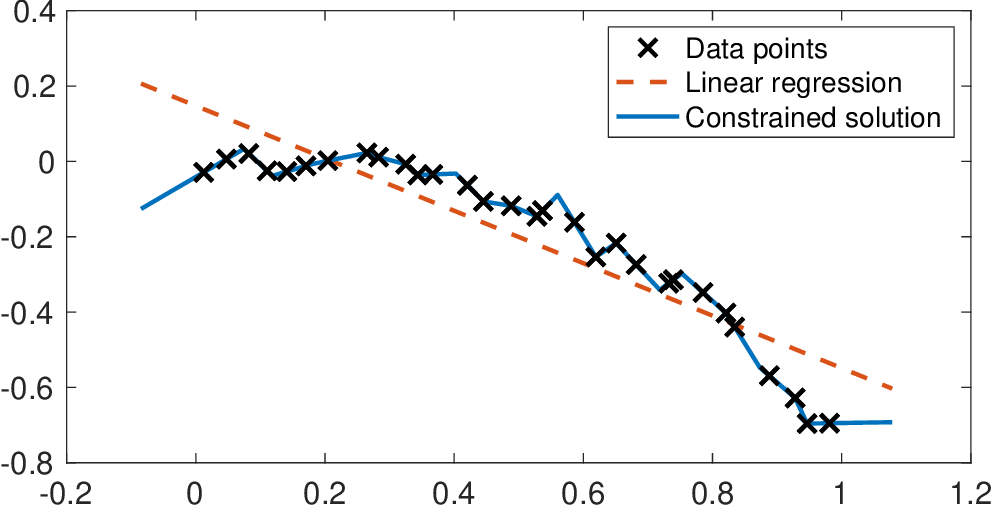} \label{fig:cost_sparsity_extremes}}
\subfloat[Sparsity \vs error trade-off. The reconstruction corresponding to the star point is shown in Figure~\ref{fig:cost_sparsity_sparse}, and the one corresponding to the square point in Figure~\ref{fig:cost_sparsity_nonsparse}.]{\includegraphics[width=0.49\linewidth, valign=t]{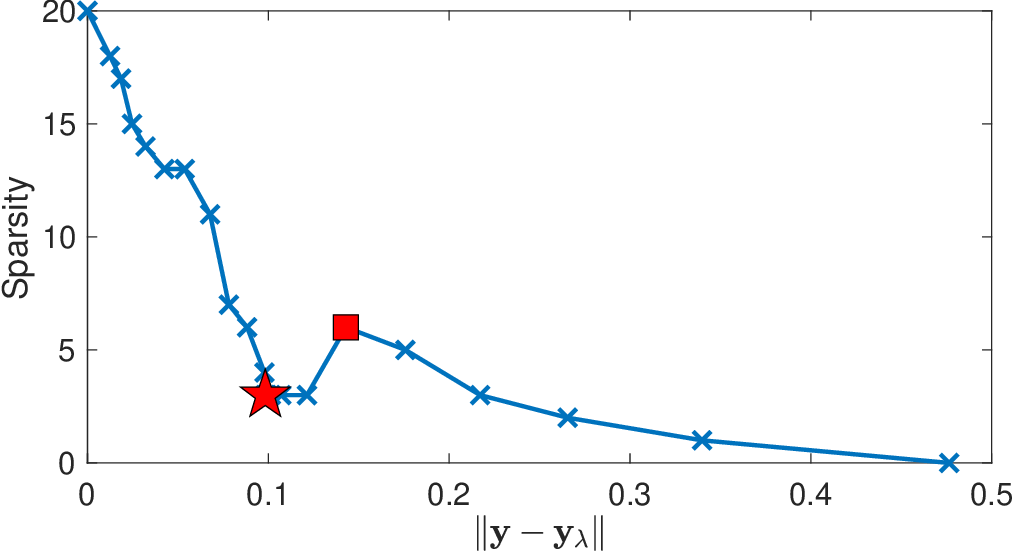}\label{fig:cost_sparsity_tradeoff}} \\
\subfloat[$\lambda = 1.7 \times 10^{-3}$, loss $\Vert \V{y} - \obsr \Vert = 0.0983$, sparsity $\sparsity{\x}{\obsr} = 3$.]{\includegraphics[width=0.49\linewidth, valign=t]{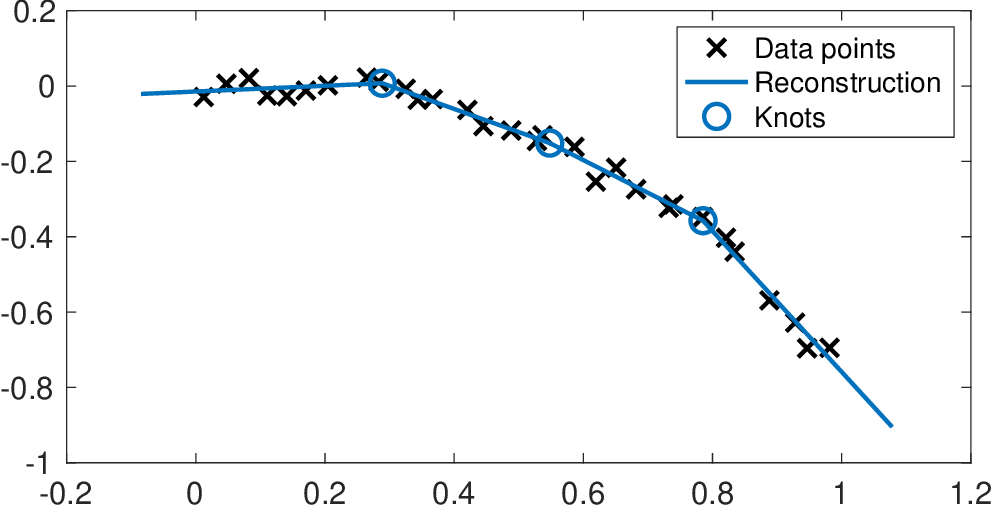}\label{fig:cost_sparsity_sparse}}
\subfloat[$\lambda = 1.71 \times 10^{-2}$, loss $\Vert \V{y} - \obsr \Vert = 0.1429$, sparsity $\sparsity{\x}{\obsr} = 6$.]{\includegraphics[width=0.49\linewidth, valign=t]{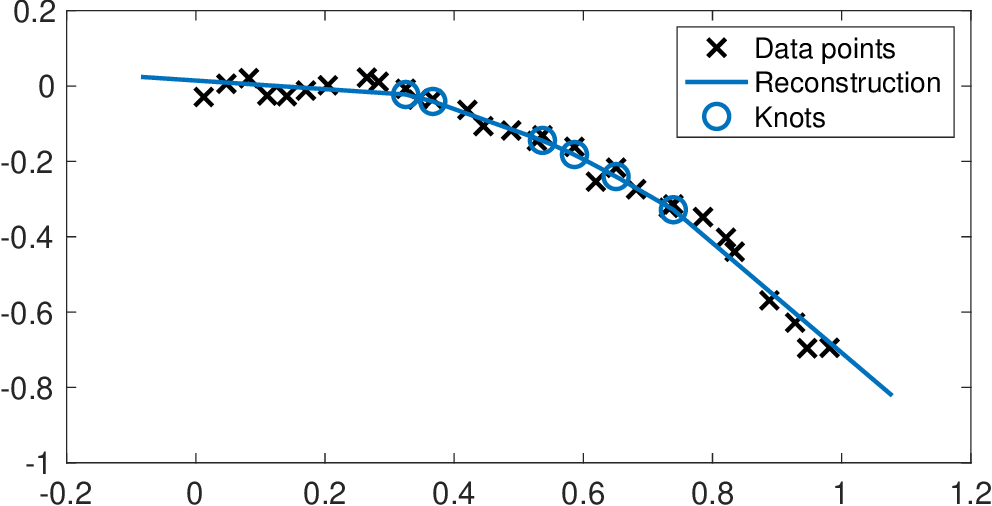}\label{fig:cost_sparsity_nonsparse}}
\caption{Example of reconstruction for varying regularization $0 \leq \lambda \leq \lambda_\text{max} = 0.1713$ with $M=30$ simulated data points. }
\label{fig:cost_sparsity}
\end{figure}

\section{Conclusion}
In this paper, we fully described the solution set of the~\eqref{eq:noiselessclean}, which consists in interpolating data points by minimizing the TV norm of the second derivative. More precisely, we specified the cases in which it has a unique solution, the form of all the solutions, and the subset of sparsest solutions.
We also proposed a simple and fast algorithm to reach (one of) the sparsest solution(s). We then extended these results to the~\eqref{eq:noisyclean}, by showing that it can be reformulated as a~\eqref{eq:noiselessclean} problem. Next, we introduced a two-step algorithm to solve the~\eqref{eq:noisyclean}, the first step of which consists in solving a discrete $\ell_1$-regularized problem, and the second in applying our algorithm to solve a~\eqref{eq:noiselessclean} problem. Finally, we applied our algorithm to some simulated data, and suggested plotting the sparsity \vs data fidelity error plot in order to judiciously select a suitable value of the regularization parameter. This paper paves the way for the study of supervised learning problems through the formulation of variational inverse problems with TV-based regularization, by completely describing the one-dimensional scenario. A future exciting - albeit much more challenging - prospect would be to achieve similar results in higher dimensions, \ie to reconstruct functions $f:\R^d \to \R$ with $d>1$. This would be a major milestone to better understand ReLU networks and deep learning in general, whose practical outstanding performances are yet to be fully explained.
 % !TEX root = ../main.tex
%
 \appendix
 \appendixpage
 
% \texorpdfstring{tex}{pdfbookmark} removes the warning about the maths symbol in the title that will not show up in in the PDF bookmarks.
\section[\texorpdfstring{tex}{pdfbookmark}]{The Space $\BV$} \label{app:1}
 
    As a complement to the characterization of the space $\BV$ in Section~\ref{sec:BVspace}, we summarize its main properties in Proposition~\ref{prop:BV2}, revealing its Banach-space structure. The construction of the native space for general spline-admissible operator $\mathrm{L}$ (we consider here the case $\mathrm{L} = \D^2$) is developed in \cite{Unser2019native}.

    \begin{proposition}[Properties of $\BV$]
        \label{prop:BV2}
        The space $\BV$ has the following properties.
        \begin{enumerate}
            \item Any function $f\in \BV$ is continuous and satisfies $f(x) = \mathcal{O}(x)$ at infinity. Affine functions $f$ such that $f(x) = a x + b$ for $a,b \in \R$ are elements of $\BV$.
            \item The linear space $\BV$ is isomorphic to $\Radon \times \R^2$ via the relation
            \begin{equation}
                \label{eq:bijection}
                f \mapsto \left( \D^2 f , (f(0), f(1)-f(0)) \right).
            \end{equation}
            \item The space $\BV$ is a Banach space for the norm
        \begin{equation} \label{eq:normBV}
        \lVert f \rVert_{\mathrm{BV}^{(2)}} \eqdef \lVert \mathrm{D}^2 f \rVert_{\mathcal{M}} + \sqrt{f(0)^2 + (f(1) - f(0))^2}.
    \end{equation}
    \item For any $w \in \Radon$, there exists a unique $f \in \BV$ such that $\D^2 f = w$ and $f(0) = f(1) = 0$.
        \end{enumerate}
%    As a Banach space, it satisfies the continuous embedding relations
 %   \begin{equation} \label{eq:embeddingsBV}
 %       \mathcal{S}(\R) \subseteq \BV \subseteq \mathcal{C}_{b,1}(\R),
 %   \end{equation}
    \end{proposition}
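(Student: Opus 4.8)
The plan is to reduce the entire statement to two ingredients: an explicit double-antiderivative operator $\Itwo$ on $\Radon$, and the identification of the null space of $\D^2$ on $\Schp$. First I would construct $\Itwo$. Given $w\in\Radon$, set $w_1(t)\eqdef w\bigl((-\infty,t]\bigr)$; since $w$ is bounded, $w_1\in L^\infty(\R)$ with $\lVert w_1\rVert_\infty\le\mnorm{w}$, and a straightforward Fubini computation against test functions $\varphi\in\Sch$ shows $\D w_1=w$ in $\Schp$. Then define
\[
\Itwo\{w\}(x)\eqdef \int_0^x w_1(t)\,\mathrm{d}t \;-\; x\int_0^1 w_1(t)\,\mathrm{d}t .
\]
This is a continuous function (a primitive of a bounded function); it vanishes at $0$ and at $1$ by construction; it satisfies $\lvert\Itwo\{w\}(x)\rvert\le 2\lvert x\rvert\,\mnorm{w}$, hence $\Itwo\{w\}(x)=\mathcal{O}(x)$ at infinity; and $\D^2\Itwo\{w\}=\D(w_1-c)=w$ with $c=\int_0^1 w_1$. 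Linearity of $w\mapsto\Itwo\{w\}$ is immediate, and its continuity $\Radon\to\BV$ will follow from the norm identity used in item~3.

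Next I would invoke the classical fact that a tempered distribution annihilated by $\D^2$ is an affine function $x\mapsto\alpha+\beta x$ (integrate twice, using that tempered distributions with vanishing derivative are the constants); affine functions plainly lie in $\BV$ since their second derivative is the zero measure. Item~1 then follows at once: for $f\in\BV$, set $w\eqdef\D^2 f\in\Radon$; then $\D^2\bigl(f-\Itwo\{w\}\bigr)=0$, so $f=\Itwo\{w\}+\alpha+\beta x$ for some $\alpha,\beta\in\R$, which exhibits $f$ as a continuous, $\mathcal{O}(x)$ function, and evaluating at $0$ and $1$ gives $\alpha=f(0)$ and $\beta=f(1)-f(0)$ --- this is precisely the decomposition~\eqref{eq:fdecompose}.

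The remaining items are bookkeeping built on this. For item~2, the map $\Phi\colon f\mapsto\bigl(\D^2 f,(f(0),f(1)-f(0))\bigr)$ of~\eqref{eq:bijection} is linear; it is injective because $\Phi(f)=0$ forces $f$ affine with $f(0)=0$ and $f(1)-f(0)=0$, hence $f\equiv 0$; and it is onto since $(w,(a,b))=\Phi\bigl(\Itwo\{w\}+a+bx\bigr)$. For item~3, by construction the quantity in~\eqref{eq:normBV} equals $\mnorm{w}+\lVert(f(0),f(1)-f(0))\rVert_2=\lVert\Phi(f)\rVert$, the natural product norm on the Banach space $\Radon\times\R^2$ ($\Radon$ being complete as the dual of $\Co$); this makes $\lVert\cdot\rVert_{\mathrm{BV}^{(2)}}$ a bona fide norm (definiteness coming from injectivity of $\Phi$) and turns $\Phi$ into an isometric linear bijection onto a complete space, so $\BV$ is Banach. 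Item~4 is the existence statement ($f=\Itwo\{w\}$ works) together with uniqueness (two such functions differ by an affine function vanishing at $0$ and $1$, hence are equal), both already contained in the construction.

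The only genuine analytic work is the first step: verifying that the double primitive of a bounded measure is still a continuous function of at most linear growth, that $\D^2\Itwo\{w\}=w$, and that the normalization $\Itwo\{w\}(0)=\Itwo\{w\}(1)=0$ holds. Once $\Itwo$ and the description of $\ker\D^2$ are in place, items~1--4 are essentially formal, so I expect the measure-theoretic bookkeeping in the construction of $\Itwo$ (and the Fubini step showing $\D w_1=w$) to be the only place requiring care.
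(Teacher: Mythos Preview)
Your proposal is correct and follows essentially the same strategy as the paper: decompose any $f\in\BV$ as a double antiderivative of $\D^2 f$ plus an affine correction coming from $\ker\D^2$, and read off items~1--4 from this decomposition. The one notable difference is your explicit construction of $\Itwo$ via the cumulative function $w_1(t)=w((-\infty,t])$ and its normalized primitive; the paper argues item~1 with the same bound $\lVert\D f\rVert_\infty\le\lVert\D^2 f\rVert_{\mathcal M}$ but, in its separate treatment of $\Itwo$, represents it instead through the kernel $g(x,y)=(x-y)_+-(-y)_++x\bigl((-y)_+-(1-y)_+\bigr)$. Your formula is a touch more elementary here, while the kernel form is what the paper exploits later to compute the adjoint $\nuu^*$ in the duality analysis.
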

    
\begin{proof}
    A function in $\BV$ is the integration of a bounded-variation function, and is therefore continuous.
    If $f$ is such that $\mathrm{D}^2 f \in \Radon$, then $\mathrm{D} f$ is bounded by $\lVert \mathrm{D}^2 f \rVert_{\mathcal{M}}$. Hence,  
    \begin{equation}
        \lvert f(x) \rvert = \left\lvert f(0) + \int_{0}^x ( \mathrm{D} f ) (t) \mathrm{d} t \right\rvert \leq \lvert f(0) \rvert + \lVert \mathrm{D} f \rVert_{\infty} \lvert x \rvert,
    \end{equation}
    and $f(x) = \mathcal{O} (x)$ at infinity. 
    %The integration of a bounded Radon measures is bounded, hence its integration is bounded by $x\mapsto x$ at infinity.
    Moreover, for an affine function $f$ such that $f(x) = a + b x$, we obviously have that $\D^2 f = 0 \in \Radon$, hence $f \in \BV$.
    The relation~\eqref{eq:bijection} is clearly linear and is a bijection, since any $f \in \BV$ can be uniquely recovered from its second derivative via the specification of two boundary conditions, here the values of $f(0)$ and $f(1)$. Hence, \eqref{eq:bijection} is an isomorphism. 
    
    Due to this isomorphism, $\BV$ inherits the Banach space structure of $\Radon\times \R^2$ for the norm $\lVert (w, (\beta_0, \beta_1)) \rVert_{\mathcal{M}\times\R^2} = \lVert w \rVert + \sqrt{\beta_0^2 + \beta_1^2}$ and is hence a Banach space for the norm~\eqref{eq:normBV}.
    For the last point, by definition, any $f\in \Schp$ such that $\D^2 f = w$ is in $\BV$. 
     The space of solutions of $\mathrm{D}^2 f = w$ is then a two-dimensional space, and the solution is uniquely characterized by the specification of the two boundary conditions $f(0) = 0$ and $f(1) = 0$. 
    \end{proof}
     
     In Section~\ref{sec:BVspace}, we have introduced the operator $\Itwo$. We now summarize its main properties.
     
    \begin{proposition}[Kernel of $\Itwo$]
    \label{prop:Itwo}
     For any $w \in \Radon$, $\Itwo\{w\}$ is given by
    %        The linear and continuous operator $\Itwo : \Sch \rightarrow \Schp$ has a Schwartz kernel $g:\R^2 \rightarrow \R$ such that
            \begin{equation} \label{eq:I2integral}
                \Itwo \{ w\} (x) \eqdef \int_{\R} g(x,y)  \mathrm{d}w (y) = \langle w , g(x,\cdot) \rangle,
            \end{equation}
            where $g$ is the kernel defined over $\R^2$ as
            \begin{equation}
                \label{eq:kernelI2}
                g(x,y) \eqdef (x-y)_+ - (-y)_+ + x \left( (-y)_+ - (1-y)_+ \right),
            \end{equation}
            and is such that $g(x,\cdot)$ is a continuous and compactly supported function for any $x\in \R$.
     Then, the operator $\Itwo$ is linear and continuous from $\Radon$ to $\BV$ and satisfies the right-inverse and pseudo-left-inverse relations
   \begin{align}
        		\forall w\in\Radon,& \quad \Op D^2\{ \Itwo  \{w\} \} = w,\\
		\forall f\in\BV, \ \forall x \in \R, & \quad f(x)  = \Itwo\{ \mathrm{D}^2 \{f \} \}(x)  + f(0) + (f(1) - f(0)) x. \label{eq:Irightleft}
	\end{align}
    In particular, $\Itwo$ is a right-inverse of the second-derivative $\D^2$.
    Moreover, any $f \in \BV$ can be uniquely decomposed as 
        \begin{equation} \label{eq:fdecompose-bis}
            \forall x \in \R, \quad f(x) = \Itwo \{w\} (x) + \beta_0 + \beta_1 x,
        \end{equation}
        where $w\in \Radon$, $\beta_0, \beta_1 \in \R$ are given by
        \begin{equation} \label{eq:walphabeta}
            w = \D^2 f, \quad \beta_0 = f(0), \quad \text{and} \quad \beta_1 = f(1) - f(0) .
        \end{equation}
    \end{proposition}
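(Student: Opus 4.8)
The plan is to prove every claim by first recording the elementary properties of the kernel $g$, then identifying the map $x \mapsto \langle w, g(x,\cdot)\rangle$ with $\Itwo\{w\}$ through the uniqueness statement of Proposition~\ref{prop:BV2}, and finally deducing the inversion formulas from the fact that the null space of $\D^2$ is the space of affine functions.

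The first step is to check the properties of $g$ stated in the proposition. For fixed $x \in \R$, the map $y \mapsto g(x,y)$ is continuous, being a finite linear combination of $y \mapsto (x-y)_+$, $y \mapsto (-y)_+$ and $y \mapsto (1-y)_+$. A short case analysis shows that $g(x,y) = 0$ whenever $y > \max(x,1)$ (each ReLU term vanishes) and whenever $y < \min(x,0)$ (the surviving affine expressions cancel), so $g(x,\cdot)$ has compact support; hence $g(x,\cdot) \in \Co$ and $\langle w, g(x,\cdot)\rangle$ is well defined for every $w \in \Radon$. Two more identities, read off directly from~\eqref{eq:kernelI2}, will do the heavy lifting: viewed as a distribution in its first argument, $\D^2_x g(x,y) = \delta(\cdot - y)$, since $\D^2 (\cdot - y)_+ = \delta(\cdot - y)$ and the remaining terms are affine in $x$; and $g(0,y) = g(1,y) = 0$ for every $y \in \R$.

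Next, define $h(x) \eqdef \langle w, g(x,\cdot)\rangle$. Continuity of $x \mapsto g(x,\cdot)$ as a map into $\Co$, together with the continuity of $w$ on $\Co$, makes $h$ continuous, and a crude bound on $|g(x,y)|$ over its ($x$-dependent) support shows that $h$ grows at most polynomially, so $h \in \Schp$. To compute $\D^2 h$, test against $\varphi \in \Sch$: $\langle \D^2 h, \varphi\rangle = \int_\R h(x)\,\varphi''(x)\,\mathrm{d}x = \int_\R \langle w, g(x,\cdot)\rangle\,\varphi''(x)\,\mathrm{d}x$, and exchange the Lebesgue integral with the pairing against $w$ --- this is the one genuinely technical point, justified by a Fubini argument using the polynomial bound on $g$ over its support, the finite total mass of $w$, and the rapid decay of $\varphi''$. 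This yields $\langle \D^2 h, \varphi\rangle = \langle w,\ y \mapsto \int_\R g(x,y)\,\varphi''(x)\,\mathrm{d}x\rangle$, and the inner integral equals $\langle \D^2_x g(\cdot,y), \varphi\rangle = \langle \delta(\cdot - y), \varphi\rangle = \varphi(y)$, so $\D^2 h = w$. In particular $h \in \BV$ because $w \in \Radon$, and $h(0) = \langle w, g(0,\cdot)\rangle = 0 = h(1)$; by the last point of Proposition~\ref{prop:BV2}, this forces $h = \Itwo\{w\}$, which is~\eqref{eq:I2integral}. Linearity of $\Itwo$ is inherited from $w \mapsto \langle w, g(x,\cdot)\rangle$, and continuity is immediate since $\lVert \Itwo\{w\}\rVert_{\mathrm{BV}^{(2)}} = \mnorm{\D^2 \Itwo\{w\}} + 0 = \mnorm{w}$, i.e. $\Itwo$ is an isometry.

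It remains to establish the inversion relations. The right-inverse identity $\D^2\{\Itwo\{w\}\} = w$ is exactly what was shown above. For the pseudo-left-inverse identity, fix $f \in \BV$ and set $w = \D^2 f$; then $\D^2(f - \Itwo\{w\}) = 0$, so $f - \Itwo\{w\}$ is an affine function, say $f(x) - \Itwo\{w\}(x) = \alpha + \beta x$. Evaluating at $x = 0$ and $x = 1$ and using $\Itwo\{w\}(0) = \Itwo\{w\}(1) = 0$ gives $\alpha = f(0)$ and $\beta = f(1) - f(0)$, which is~\eqref{eq:Irightleft}. The decomposition~\eqref{eq:fdecompose-bis} is just a rewriting of this identity, and its uniqueness follows by applying $\D^2$ to any such decomposition (which forces $w = \D^2 f$) and then evaluating the affine remainder at $0$ and $1$. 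Aside from the interchange of integration and the pairing against $w$, every step is routine bookkeeping with affine functions and the normalization $g(0,\cdot) = g(1,\cdot) = 0$.
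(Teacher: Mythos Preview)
Your proof is correct and follows essentially the same route as the paper's: verify the support and boundary properties of $g$, show that $h(x)=\langle w,g(x,\cdot)\rangle$ satisfies $\D^2 h=w$ with $h(0)=h(1)=0$, invoke the uniqueness in Proposition~\ref{prop:BV2} to identify $h=\Itwo\{w\}$, then read off continuity and the inversion/decomposition formulas from the null space of $\D^2$. The only difference is that you justify $\D^2 h=w$ carefully via a distributional Fubini argument, whereas the paper writes the same computation as $\D^2 f(x)=\langle\partial_x^2 g(x,\cdot),w\rangle=w(x)$ with an acknowledged abuse of notation.
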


   \begin{proof}
    
       We fix $x \in\R$. We easily verify that $g(x,y) = 0$ for $\lvert y \rvert \geq \max(1, \lvert x \rvert)$, hence $g(x,\cdot)$ is compactly supported. The function $g(x, \cdot)$ is continuous due to the continuity of $y \mapsto y_+$. Therefore, $g(x,\cdot) \in \Co$ and the duality product $\langle w, g(x,\cdot) \rangle$ is well defined for any $w \in \Radon$ and $x\in \R$.
       
      For $w \in \Radon$ and $x \in \R $, we set $f(x) = \langle w , g(x,\cdot ) \rangle$. We now prove that $\mathrm{D}^2 f = w$ in the distributional sense. First, we prove that $f$ is continuous and is therefore an element of the space of distributions $\mathcal{D}'(\R)$. For any $x,x_0 \in \R$, we have that $|f(x) - f(x_0)| = \left| \int_{\R} (g(x,y) - g(x_0,y) ) \mathrm{d}w (y) \right| \leq \lVert g(x,\cdot) - g(x_0,\cdot) \rVert_\infty \mnorm{w}$, and we easily see from the definition of $g$ in \eqref{eq:kernelI2} that $ \lVert g(x,\cdot) - g(x_0,\cdot) \rVert_\infty \rightarrow 0$ when $x\rightarrow x_0$.
%      To do so, we first prove that $f$ is Lipschitz continuous. As such, it is continuous and bounded by a polynomial (that can be chosen of degree $1$) and is therefore in $\mathcal{S}'(\R)$. It then suffices to show that $\langle \mathrm{D}^2 f , \varphi \rangle = \langle w , \varphi \rangle$ for any $\varphi \in \mathcal{S}(\R)$.
    It then suffices to show that $\langle \mathrm{D}^2 f , \varphi \rangle = \langle w , \varphi \rangle$ for any compactly supported and infinitely smooth test function $\varphi \in \mathcal{D}(\R)$ to deduce that $\mathrm{D}^2 f = w$ in $\mathcal{D}'(\R)$, and that this equality also holds in $\Radon$ since $w \in \Radon$.
    
       From the definition of $g$, denoting by $\partial_x$ the partial derivative with respect to the first variable, we have
       \begin{equation} \label{eq:partix2}
           \partial_x^2 \{g  \} (\cdot ,y) = \delta( \cdot -y). 
       \end{equation}
%       Then, for any $x,y \in \R$, we have that
%       $|f(x) - f(y)| = \left| \int_{\R} (g(x,z) - g(y,z) ) \mathrm{d}w (z) \right| \leq \lVert g(x,\cdot) - g(y,\cdot) \rVert_\infty \mnorm{w}$. Moreover, we easily see from the definition \eqref{eq:kernelI2} of $g$ that $\lVert  g(x,\cdot) - g(y,\cdot) \rVert_\infty \leq 2 |x-y|$, hence $|f(x) - f(y)| \leq 2 \mnorm{w} |x-y|$ and $f$ is Lipschitz continuous. As stated earlier, this proves that $f \in \mathcal{S}'(\R)$. 
       Let $\varphi \in \mathcal{D}(\R)$ and $K$ be its compact support. We have that
       \begin{equation}
           \int_{\R} \int_{\R} |g(x,y)| |\varphi''(x)| \mathrm{d} w(y) \mathrm{d}x  \leq \lVert \varphi'' \rVert_\infty \sup_{x \in K, \ y \in \R} |g(x,y)| \mathrm{Leb}(K) \mnorm{w},
       \end{equation}       
where $\mathrm{Leb}(K)$ is the Lebesgue measure of $K$. We then observe that, for any fixed $x \in \R$, we have $|g(x,y)| \leq |(x-y)_+ - (-y)_+ | + |x| |(1-y)_+ - (-y)_+| \leq |x| + |x| \leq 2|x|$, hence  $\sup_{x \in K, \ y \in \R} |g(x,y)| \leq 2 \sup_{x \in K} |x| < \infty$  and therefore
\begin{equation}
    \label{eq:useFubini}
    \int_{\R} \int_{\R} |g(x,y)| |\varphi''(x)| \mathrm{d} w(y) \mathrm{d}x < \infty.
\end{equation}
Then, we have that 
       \begin{align} \label{eq:trucmarchin}
           \langle \mathrm{D}^2 f , \varphi \rangle
           = \langle f , \mathrm{D}^2 \varphi \rangle
           = \int_{\R} \left( \int_{\R} g(x,y) \mathrm{d}w (y) \right) \varphi''(x) \mathrm{d}x
           = \int_{\R} \left( \int_{\R} \varphi''(x) g(x,y) \mathrm{d} x \right) \mathrm{d}w (y),
       \end{align}
       where the second equality follows from Fubini's theorem with the hypothesis \eqref{eq:useFubini}.
      
       Due to \eqref{eq:partix2}, we then observe that $\int_{\R} \varphi''(x) g(x,y) \mathrm{d} x = \langle \mathrm{D}^2 \varphi , g(\cdot, y) \rangle = \langle \varphi , \partial_x^2 g(\cdot, y) \rangle =   \langle \varphi , \delta(\cdot - y) \rangle = \varphi(y)$. Hence, \eqref{eq:trucmarchin} yields
       $\langle \mathrm{D}^2 f , \varphi \rangle = \int_{\R} \varphi(y) \mathrm{d}w (y) = \langle w , \varphi \rangle$, which proves that $\mathrm{D}^2 f = w$.
       
%       We therefore deduce that
%\begin{equation}
%    \mathrm{D}^2 \{f\}(x) =  \langle    \partial_x^2 \{g  \} (x ,\cdot) , w \rangle = \langle \delta(x-\cdot),   w \rangle = w(x),
%\end{equation}
 %      where we use the slight abuse of notation of keeping the variable $x$ for measures (that may not be defined pointwise) to distinguish when we operate over the first or the second variable.
    Moreover, we have that $g(0,y) = g(1,y) = 0$ for all $y\in \R$, which yields $f(0) = f(1) = 0$.
       From the definition of $\Itwo$, $\Itwo \{ w \}$ is the unique function satisfying these  properties, proving that $\Itwo \{w\} (x) = f (x) = \langle w , g(x,\cdot ) \rangle$ for every $x\in \R$ and $w \in \Radon$. This shows \eqref{eq:I2integral}.
       
        Next, it is clear that $\Itwo$ is linear from $\Radon$ to $\BV$. The continuity of $\Itwo$ follows from the fact that 
       \begin{equation}
           \lVert \Itwo \{w\} \rVert_{\mathrm{BV}^{(2)}} 
           = \lVert \D^2 \Itwo \{w\} \rVert_\mathcal{M} + \sqrt{(\Itwo \{w\} (0))^2 + ((\Itwo \{w\} (1) - \Itwo \{w\} (0))^2} = \lVert w \rVert_\mathcal{M}.
       \end{equation}
       
        The equality $\mathrm{D}^2 \Itwo \{w\} = w$ comes from the definition of $\Itwo \{w\}$. For the right-hand side of~\eqref{eq:Irightleft}, we remark that $\mathrm{D}^2 \{ \Itwo \D^2 \{f\} \} = \D^2 f$ by definition, hence  $ \Itwo \D^2 \{f\} (x) = f(x) + \beta_0 + \beta_1 x$ for every $x\in \R$ and some constants $\beta_0, \beta_1 \in \R$. The equations $ \Itwo \D^2 \{f\} (0) =  \Itwo \D^2 \{f\} (1) = 0$ then specify the constants $\beta_0$ and $\beta_1$, which proves~\eqref{eq:Irightleft}.
    Finally,~\eqref{eq:fdecompose-bis} and~\eqref{eq:walphabeta} can be seen as reformulations of the right equality in~\eqref{eq:Irightleft}. The uniqueness follows from the simple fact that $\D^2 f = w$ determines $f$ when the values of $f(0)$ and $f(1)$ are fixed. 
\end{proof}

\section{Proof of Proposition~\ref{prop:cns-sol-constrained}}\label{app:2}

The forward operator considered in this paper is a sampling operator (the functions $f\in\BV$ are sampled at the locations $x_m\in\RR$ for $m\in\{1,\ldots,M\}$). Let us denote it, for the convenience of the proof, as a linear operator $\nuf:\BV\to\RR^M$ such that
\begin{align}\label{eq:forward-op-nu}
	\forall f\in\BV, \quad \nuf(f)\eqdef(f(x_m))_{1\leq m\leq M}.
\end{align}
%In what follows, the space $\BV$ is endowed with its weak-* topology so that the linear operator $\nuf$ is weak-* continuous (see~\cite[Theorem 1]{unser2018representer}).

The proof of Proposition~\ref{prop:cns-sol-constrained} can be divided in several steps. 
First, we reformulate~\eqref{eq:noiseless} into an equivalent optimization problem thanks to the decomposition of any $f\in\BV$ given by~\eqref{eq:fdecompose}. This is stated in the next lemma.

\begin{lemma}\label{lem:equivalentOP}
The problem~\eqref{eq:noiseless} is equivalent to
\begin{align}\label{eq:constrained-equi}
	\underset{(w, (\beta_0, \beta_1))\in\Radon\times\RR^2}{\min} \  \iota_{\{\obs\}}(\nuu(w) + \beta_0 \un + \beta_1 \x) + \mnorm{w}.
\end{align}
where $\iota_{\{\obs\}}$ is the indicator of the convex set $\{\obs\}$, which is zero at $\obs$ and $+\infty$ elsewhere, and
\begin{align}\label{eq:def-nuu}
	\nuu \eqdef \nuf\circ\Itwo : \Radon \to \RR^M
\end{align}
is the modified forward operator. This equivalence is in the sense that there exists a bijection given by the unique decomposition of any $f\in\BV$ as $f=\Itwo\{w\} + \beta_0 + \beta_1(\cdot)$ with $(w,(\beta_0, \beta_1))\in\Radon\times\RR^2$ (see\eqref{eq:fdecompose}) between the solution sets of both optimization problems.
\end{lemma}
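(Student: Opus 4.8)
The plan is to use the unique decomposition~\eqref{eq:fdecompose} of elements of $\BV$, which by Proposition~\ref{prop:BV2} realizes $\BV$ as isomorphic to $\Radon\times\RR^2$ through the linear bijection $f\mapsto(w,(\alpha,\beta))$ with $w=\D^2 f$, $\alpha=f(0)$, $\beta=f(1)-f(0)$, and conversely $f=\Itwo\{w\}+\alpha+\beta(\cdot)$. I would show that, under this bijection, the feasible set and the objective of~\eqref{eq:noiseless} are carried \emph{exactly} onto those of~\eqref{eq:constrained-equi}, so that the two problems share the same optimal value and their solution sets are in one-to-one correspondence.

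First I would handle the regularization term: for the pair $(w,(\alpha,\beta))$ associated with $f$ we have $\D^2 f = w$, hence $\mnorm{\D^2 f}=\mnorm{w}$, so the regularization parts of the two functionals agree. Next I would translate the interpolation constraints. Writing $f(x)=\Itwo\{w\}(x)+\alpha+\beta x$ and using the definitions~\eqref{eq:forward-op-nu} and~\eqref{eq:def-nuu} of $\nuf$ and $\nuu=\nuf\circ\Itwo$, the sample vector of $f$ reads $\nuf(f)=\nuu(w)+\alpha\un+\beta\x$. Therefore the constraint $f(x_m)=y_{0,m}$ for all $m$, i.e.\ $\nuf(f)=\obs$, is equivalent to $\nuu(w)+\alpha\un+\beta\x=\obs$, which is precisely what the term $\iota_{\{\obs\}}(\nuu(w)+\alpha\un+\beta\x)$ enforces (value $0$ when satisfied, $+\infty$ otherwise). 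Consequently a pair $(w,(\alpha,\beta))$ has finite objective in~\eqref{eq:constrained-equi} if and only if the associated $f$ is feasible for~\eqref{eq:noiseless}, and in that case the two objective values coincide. Combining this with the bijectivity of the decomposition yields that the infima agree and that $f$ solves~\eqref{eq:noiseless} exactly when its associated pair solves~\eqref{eq:constrained-equi}.

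The only points that require some care — and the main, though mild, obstacle — are the well-posedness ingredients that have already been established in the excerpt: that $\Itwo$ is a well-defined, linear, continuous map from $\Radon$ into $\BV$ with $\Itwo\{w\}(0)=\Itwo\{w\}(1)=0$, so that the recovered boundary data $\alpha=f(0)$ and $\beta=f(1)-f(0)$ are consistent (Propositions~\ref{prop:BV2} and~\ref{prop:Itwo}), and that the sampling functionals $f\mapsto f(x_m)$ are genuinely well-defined on $\BV$, so that $\nuf$ and hence $\nuu$ make sense. Once these are invoked, the equivalence is a direct bookkeeping argument: no duality, existence, or compactness input is needed for this lemma itself, those being deployed only in the subsequent steps of the proof of Proposition~\ref{prop:cns-sol-constrained}.
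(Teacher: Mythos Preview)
Your proposal is correct and matches the paper's approach: the paper does not give a separate proof of this lemma, treating it as an immediate reformulation obtained from the unique decomposition~\eqref{eq:fdecompose} (equivalently, the isomorphism of Proposition~\ref{prop:BV2}), which is precisely the bookkeeping you carry out. Your observation that no duality or compactness is needed here, only the identification $\nuf(f)=\nuu(w)+\alpha\un+\beta\x$ and $\mnorm{\D^2 f}=\mnorm{w}$, is exactly the point.
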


From now on, we consider the equivalent problem~\eqref{eq:constrained-equi} and analyze it using tools from duality theory. The search space $\Radon\times\RR^2$ of this optimization problem is endowed with the weak-* topology, which is defined in terms of its predual space $\Co\times\RR^2$. Using~\eqref{eq:I2integral}, the modified operator $\nuu$ can be expressed as $\nuu(w)=(\dotp{w}{g(x_m,\cdot)})_{1\leq m\leq M}$, where $g(x_m,\cdot)\in\Co$ for all $m\in\{1,\ldots,M\}$ by Proposition~\ref{prop:Itwo}. Since $\Radon$ is the dual of $\Co$, this implies that the linear functional $\nuu:\Radon\to\RR^M$ is weak-* continuous \cite[Theorem IV.20, p. 114]{Reed1980methods}. The adjoint $\nuu^* : \RR^M \to \Co$ of $\nuu$ is thus uniquely defined and is given by
\begin{align}\label{eq:nuu-star}
    \forall \dualvar\in\RR^M, \quad \nuu^*(\dualvar)=\sum_{m=1}^M \dualvarm g(x_m,\cdot),
\end{align}
since $\dotp{w}{\nuu^*(\dualvar)}=\dotp{\nuu(w)}{\dualvar}=\dotp{\pa{\dotp{w}{g(x_m,\cdot)}}_{1\leq m\leq M}}{\dualvar}=\dotp{w}{\sum_{m=1}^M \dualvarm g(x_m,\cdot)}$, for all $w\in\Radon$ and $\dualvar\in\RR^M$.

The second part of the proof consists in determining the dual problem of~\eqref{eq:constrained-equi}, proving that strong duality between the primal and dual problem holds (\ie that the optimal values of both problems are equal and finite) and then deriving the optimality conditions which characterize the solutions of problem~\eqref{eq:constrained-equi}. This is done in the next lemma.

\begin{lemma}
\label{lem:optimality-certificate}
The dual problem of~\eqref{eq:constrained-equi} is given by
\begin{align}\label{eq:constrained-dual}
	&\underset{\dualvar\in\Cc}{\sup} \dotp{\obs}{\dualvar}, \qwithq \Cc\eqdef\{\dualvar\in\RR^M:\ \dotp{\dualvar}{\un}=\dotp{\dualvar}{\x}=0,\ \normi{\nuu^*(\dualvar)}\leq1\}.
 \end{align}

Moreover, it has at least one solution and strong duality holds between problems~\eqref{eq:constrained-equi} and~\eqref{eq:constrained-dual}. Finally, for any $(w,(\beta_0, \beta_1)) \in \Radon \times \RR^2$ and $\V c\in \RR^M$, we have the equivalence between the following statements:
\begin{enumerate}
    \item $(w,(\beta_0, \beta_1))$ is a solution of~\eqref{eq:constrained-equi} and $\V c$ is a solution of~\eqref{eq:constrained-dual}.
    \item $(w,(\beta_0, \beta_1))$ and $\V c$ satisfy the following conditions:
\begin{align}
    &\nuu(w) + \beta_0 \un + \beta_1 \x = \obs,\label{eq:ic}\\
    &\dotp{\dualvar}{\un} = \dotp{\dualvar}{\x} = 0, \quad \mnorm{w} = \dotp{w}{\nuu^*(\dualvar)} \qandq \normi{\nuu^*(\dualvar)}\leq1. \label{eq:oc}
\end{align}
\end{enumerate}
\end{lemma}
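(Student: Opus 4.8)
The plan is to recognize~\eqref{eq:constrained-equi} as a Fenchel--Rockafellar primal problem of the form $\min_u F(u)+G(\Phi u)$ and to compute its dual explicitly. I set $u=(w,(\al,\be))\in\Radon\times\RR^2$, $F(u)\eqdef\mnorm{w}$ (there is no penalty on $(\al,\be)$), $G\eqdef\iota_{\{\obs\}}$ on $\RR^M$, and $\Phi(w,(\al,\be))\eqdef\nuu(w)+\al\un+\be\x$. Since $\nuu$ is weak-* continuous with predual adjoint $\nuu^*:\RR^M\to\Co$ given by~\eqref{eq:nuu-star}, and the $(\al,\be)$-block is finite dimensional, $\Phi$ is weak-* continuous and its adjoint maps $\RR^M$ into the predual space $\Co\times\RR^2$, with $\Phi^*(\dualvar)=\bigl(\nuu^*(\dualvar),(\dotp{\dualvar}{\un},\dotp{\dualvar}{\x})\bigr)$. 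I then compute the conjugates: $G^*(\dualvar)=\dotp{\obs}{\dualvar}$, and, evaluated at a predual point $(\eta,(a,b))\in\Co\times\RR^2$, $F^*(\eta,(a,b))=\iota_{\{\normi{\cdot}\leq1\}}(\eta)+\iota_{\{(0,0)\}}\bigl((a,b)\bigr)$: the first summand because $\mnorm{\cdot}$ is by~\eqref{eq:Mnorm} the support functional of the $\normi{\cdot}$-unit ball of $\Co$, and the second because $\sup_{\al\in\RR}a\al=+\infty$ unless $a=0$. Substituting $\Phi^*(\dualvar)$ into $\sup_{\dualvar\in\RR^M}\bigl\{-F^*(\Phi^*\dualvar)-G^*(-\dualvar)\bigr\}$ collapses it to $\sup_{\dualvar\in\Cc}\dotp{\obs}{\dualvar}$, which is exactly~\eqref{eq:constrained-dual}.

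Next I would establish strong duality together with attainment of the dual. The relevant constraint qualification is that $\Phi$ is onto $\RR^M$, which holds because any target vector of values is interpolated by a piecewise-linear spline (\eg the canonical interpolant of Definition~\ref{def:canonicalsolutiondef}), so $\operatorname{dom}G-\Phi(\operatorname{dom}F)=\{\obs\}-\RR^M=\RR^M$ and $0$ lies in its interior. Given that $F$ is proper, convex and weak-* lower semicontinuous, that $\Phi$ is weak-* continuous, and that $G$ is proper, convex and lower semicontinuous on the finite-dimensional space $\RR^M$, a standard Fenchel--Rockafellar argument in this non-reflexive dual-Banach-space setting then yields that the optimal value of~\eqref{eq:constrained-equi} equals that of~\eqref{eq:constrained-dual} with the latter supremum attained; both values are finite since $\fcano$ is feasible with $0\leq\mnorm{\Op{D}^2\fcano}<\infty$. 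I expect this step --- making the duality fully rigorous in the non-reflexive setting and verifying the qualification --- to be the only genuine difficulty; everything else is bookkeeping.

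Finally I would read off the optimality conditions from strong duality. If $(w,(\al,\be))$ is primal feasible and $\dualvar\in\Cc$, then $\dotp{\obs}{\dualvar}=\dotp{\nuu(w)}{\dualvar}+\al\dotp{\un}{\dualvar}+\be\dotp{\x}{\dualvar}=\dotp{w}{\nuu^*(\dualvar)}\leq\mnorm{w}\,\normi{\nuu^*(\dualvar)}\leq\mnorm{w}$, so weak duality always holds. By the strong duality just established, $(w,(\al,\be))$ solves~\eqref{eq:constrained-equi} and $\dualvar$ solves~\eqref{eq:constrained-dual} if and only if both are feasible and the duality gap vanishes: primal feasibility is precisely~\eqref{eq:ic}; dual feasibility ($\dualvar\in\Cc$) is the pair of conditions $\dotp{\dualvar}{\un}=\dotp{\dualvar}{\x}=0$ and $\normi{\nuu^*(\dualvar)}\leq1$; and, using this feasibility, the zero-gap equality $\mnorm{w}=\dotp{\obs}{\dualvar}$ rewrites as $\mnorm{w}=\dotp{w}{\nuu^*(\dualvar)}$. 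Assembling these gives exactly the system~\eqref{eq:ic}--\eqref{eq:oc}, and the reverse implication follows by reading this chain of equalities backwards and invoking weak duality, which completes the proof.
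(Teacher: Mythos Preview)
Your derivation of the dual and your reading-off of the optimality conditions from the zero-gap equality are the same as the paper's. The one substantive difference is how strong duality and dual attainment are obtained.

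You argue directly on the primal, noting that $\Phi$ is onto $\RR^M$ so that $0\in\operatorname{int}\bigl(\operatorname{dom}G-\Phi(\operatorname{dom}F)\bigr)=\RR^M$, and invoke a Fenchel--Rockafellar theorem. This is correct, but you should be precise about which version: the classical Ekeland--Temam qualification (continuity of $G$ at some point of $\Phi(\operatorname{dom}F)$) fails here because $G=\iota_{\{\obs\}}$ is nowhere continuous, so you need an interior-type condition of Attouch--Br\'ezis or Rockafellar flavour, which is legitimate since the target space $\RR^M$ is finite-dimensional. The paper avoids this by a detour through the bidual: it recasts the dual~\eqref{eq:constrained-dual} as a minimization, perturbs it by a variable $\eta\in\Co$, checks that the perturbed objective at $\dualvar=\V 0$, namely $\eta\mapsto\iota_{\normi{\cdot}\leq 1}(-\eta)$, \emph{is} finite and continuous at $0$ so that the elementary qualification applies to the dual, and then verifies by hand that the resulting bidual coincides with the original primal. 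The paper also proves dual attainment separately, by showing that $\Cc$ is compact (it is the intersection of a closed subspace with the compact set $\{\dualvar:\normi{\nuu^*(\dualvar)}\leq 1\}$, the latter compactness coming from the linear independence of the $g(x_m,\cdot)$). Your route is shorter and gets dual attainment for free from the duality theorem; the paper's is longer but stays within the most elementary duality statement and makes the compactness of $\Cc$ explicit.
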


\begin{proof}
Let us first obtain the dual problem~\eqref{eq:constrained-dual}. The proof follows the technique of perturbed problems detailed in~\cite[Chapter 3]{Ekeland1976convex}.
\paragraph{Dual problem}Let us write the (primal) problem~\eqref{eq:constrained-equi} as
\begin{align}\label{eq:primal-proof}
	&\underset{(w,(\beta_0, \beta_1))\in\Radon\times\RR^2}{\min} F(w,(\beta_0, \beta_1)) + G(\Lambda(w,(\beta_0, \beta_1))),
	\end{align}
	where $F(w,(\beta_0, \beta_1))\eqdef\mnorm{w}$, $G(\dualvar)\eqdef \iota_{\{\obs\}}(\dualvar)$ for all $\dualvar\in \RR^M$, and $\Lambda(w,(\beta_0, \beta_1)) \eqdef \nuu(w) + \beta_0 \un + \beta_1 \x$.

The functions $F$ and $G$ are convex, lower semi-continuous and not identically equal to $\pm\infty$. By~\cite[Equation (4.18)]{Ekeland1976convex}, the dual problem of~\eqref{eq:primal-proof} is thus given by $\underset{\dualvar\in\RR^M}{\sup} -F^*(\Lambda^*(\dualvar)) - G^*(-\dualvar)$, where $F^*$ and $G^*$ are the Fenchel conjugates of $F$ and $G$ respectively, and $\Lambda^* : \RR^M\to\Co\times\RR^2$ is the adjoint of $\Lambda$. One can check that for all $\dualvar\in\RR^M$, $G^*(\dualvar)=\dotp{\dualvar}{\obs}$, for all $\eta\in\Co$ and $\beta_0, \beta_1 \in \RR$, $F^*(\eta, (\beta_0, \beta_1))=\iota_{\normi{\cdot}\leq1}(\eta) + \iota_{\{(0,0)\}}((\beta_0, \beta_1))$ (with $\iota_{\normi{\cdot}\leq1}$ the indicator function of the closed unit ball in $\Co$ for the uniform norm), and for all $\dualvar\in\RR^M$, $\Lambda^*(\dualvar)=\pa{\nuu^*(\dualvar),(\dotp{\dualvar}{\un},\dotp{\dualvar}{\x})}$. Therefore, the dual problem can be rewritten as
\begin{align}\label{eq:dual-proof}
	&-\underset{\dualvar\in\RR^M}{\inf} \ \iota_\Cc (\dualvar) + \dotp{-\dualvar}{\obs},
\end{align}
where $\Cc\subset\RR^M$ is the convex set defined in~\eqref{eq:constrained-dual}. Problem~\eqref{eq:dual-proof} is clearly the same as problem~\eqref{eq:constrained-dual}, which proves the first statement of the lemma.
\paragraph{Strong duality}To prove strong duality between problems~\eqref{eq:constrained-equi} and~\eqref{eq:constrained-dual} (\ie they have the same optimal value), we start by showing strong duality between
\begin{align}\label{eq:dual-equi-sd}
    \underset{\dualvar\in\RR^M}{\inf} \ \iota_\Cc (\dualvar) + \dotp{-\dualvar}{\obs},
\end{align}
and its dual problem. We then conclude by observing that the optimal value of the dual problem of~\eqref{eq:dual-equi-sd} is equal to the optimal value of problem~\eqref{eq:constrained-equi} up to a sign. Indeed, this last statement proves that both problems~\eqref{eq:constrained-equi} and~\eqref{eq:constrained-dual} have the same optimal value since problem~\eqref{eq:dual-equi-sd} is, up to a sign, the dual problem~\eqref{eq:constrained-dual} (which rewrites as in~\eqref{eq:dual-proof}).

We first start by proving that strong duality holds between problem~\eqref{eq:dual-equi-sd} and its dual problem. The aim is to apply~\cite[Proposition~2.3, Chapter~3]{Ekeland1976convex}. With the notations of~\cite{Ekeland1976convex}, let us denote the map $\Phi:\RR^M\times\Co\to\RR\cup\{+\infty\}$ as
\begin{align}\label{eq:phi-perturbed}
    \forall (\dualvar,\eta)\in\RR^M\times\Co, \quad \Phi(\dualvar,\eta)\eqdef \dotp{-\dualvar}{\obs} + \iota_{\{(0,0)\}}\pa{(\dotp{\dualvar}{\un},\dotp{\dualvar}{\x})} + \iota_{\normi{\cdot}\leq1}(\nuu^*(\dualvar) - \eta).
\end{align}
This map $\Phi$ defines a perturbed problem to problem~\eqref{eq:dual-equi-sd}, since by definition, for all $\dualvar\in\RR^M$,
\eq{
    \Phi(\dualvar,0)=\iota_\Cc (\dualvar) + \dotp{-\dualvar}{\obs}
}
is the objective function of problem~\eqref{eq:dual-equi-sd}. Now let us check that the assumptions of~\cite[Proposition~2.3]{Ekeland1976convex} are satisfied for $\Phi$ and problem~\eqref{eq:dual-equi-sd}:
\begin{itemize}
\item $\Phi$ is convex,
\item the optimal value of problem~\eqref{eq:dual-equi-sd} is finite due to the weak duality (primal-dual inequality given below) between problems~\eqref{eq:primal-proof} and~\eqref{eq:dual-proof}, which yields
\eq{
    -\infty < -\underset{\dualvar\in\RR^M}{\inf} \ \iota_\Cc (\dualvar) + \dotp{-\dualvar}{\obs} \leq \underset{(w,(\beta_0, \beta_1))\in\Radon\times\RR^2}{\inf} \mnorm{w} + \iota_{\{\obs\}}(\nuu(w) + \beta_0 \un + \beta_1 \x) < +\infty,
}
\item the map $\eta\in\Co\mapsto\Phi(\V 0, \eta) = \iota_{\normi{\cdot}\leq1}(-\eta)$ is finite and continuous at $\eta = 0\in\Co$.
\end{itemize}
Therefore, we deduce that strong duality holds between problem~\eqref{eq:dual-equi-sd} and its dual problem given by
\begin{align}\label{eq:bidual}
    \underset{w\in\Radon}{\sup} \ -\Phi^*(\V{0},w),
\end{align}
and that this last optimization problem has at least one solution. By writing the map $\Phi$ as $\Phi(\dualvar,\eta)=\tilde F(\dualvar) + \tilde G(\tilde\Lambda(\dualvar)-\eta)$ with $\tilde F(\dualvar)\eqdef\dotp{-\dualvar}{\obs}+\iota_{V^\perp}(\dualvar)$, $V\eqdef\Span(\un,\x)\subset\RR^M$, $\tilde G\eqdef\iota_{\normi{\cdot}\leq1}(\cdot)$, and $\tilde\Lambda=\nuu^*$,we get that $\Phi^*(\V{c},w) = \tilde{F}^*(\tilde\Lambda^*(w) + \V c)+\tilde G^*(-w)$ for any $(\V c, w) \in \RR^M \times \Radon$, and thus that problem~\eqref{eq:bidual} becomes %$-\min_{w\in\Radon}\tilde{F}^*(\tilde\Lambda^*(w))+\tilde G^*(-w)$, \ie
\begin{align}
\label{eq:bidual-eq}
    -\min_{w\in\Radon} \iota_V(\nuu(w)+\obs)+\mnorm{w}.
\end{align}
We now verify that the optimal value of 
\begin{align}\label{eq:bidual-minus}
    \min_{w\in\Radon} \iota_V(\nuu(w)+\obs)+\mnorm{w},
\end{align}
\ie minus the optimal value of the dual problem of~\eqref{eq:dual-equi-sd} is equal to the optimal value of problem~\eqref{eq:constrained-equi}
\begin{align}
    \underset{(w,(\beta_0, \beta_1))\in\Radon\times\RR^2}{\min} \ \iota_{\{\obs\}}(\nuu(w) + \beta_0 \un + \beta_1 \x) + \mnorm{w}.
\end{align}
Let $w\in\Radon$ be a solution of problem~\eqref{eq:bidual-minus} (which we know to exist by~\cite[Proposition~2.3]{Ekeland1976convex}). Since the objective function of problem~\eqref{eq:bidual-minus} is finite at $w$, we obtain that $\nuu(w)+\obs\in V$, \ie there exists $(\beta_0, \beta_1)\in\RR^2$ such that $\obs = \nuu(-w)+\beta_0 \un + \beta_1 \x$. Assume by contradiction that there exist $(\tilde w, (\tilde \beta_0,\tilde \beta_1)) \in \Radon \times \RR^2$ that achieve a lower cost than $(w,(\beta_0, \beta_1))$ in~\eqref{eq:constrained-equi}, \ie
\eq{
    \iota_{\{\obs\}}(\nuu(-w) + \beta_0 \un + \beta_1\x) + \mnorm{-w} > \iota_{\{\obs\}}(\nuu(\tilde w) + \tilde \beta_0 \un + \tilde \beta_1 \x) + \mnorm{\tilde w}.
}
Since the left term of this inequality in finite, we must have $\obs = \nuu(\tilde w) + \tilde \beta_0 \un + \tilde \beta_1 \x$ and
\begin{align}\label{eq:ineq-mnorm}
    \mnorm{w} > \mnorm{-\tilde w}.
\end{align}
Since $\nuu(-\tilde w)+\obs = \tilde \beta_0 \un + \tilde \beta_1 \x\in V$, we deduce thanks to~\eqref{eq:ineq-mnorm} that $-\tilde w$ achieves a lower cost than $w$ for problem~\eqref{eq:bidual-minus}, which contradicts the assumption on $w$. Hence, for all $w\in\Radon$, $(\beta_0, \beta_1) \in \RR^2$, we have
\eq{
    \iota_{\{\obs\}}(\nuu(-w) + \beta_0 \un + \beta_1 \x) + \mnorm{-w} \leq \iota_{\{\obs\}}(\nuu(w) + \beta_0 \un + \beta_1 \x) + \mnorm{w},
}
\ie $(-w, (\beta_0, \beta_1)) \in \Radon\times\RR^2$ is a solution of problem~\eqref{eq:constrained-equi}. Therefore, we get that the optimal values of problems~\eqref{eq:bidual-minus} and~\eqref{eq:constrained-equi} are equal since
\eq{
    \iota_V(\nuu(w)+\obs)+\mnorm{w} = \iota_{\{\obs\}}(\nuu(-w) + \beta_0 \un + \beta_1 \x) + \mnorm{-w}.
}

%We now verify that the optimal value of problem~\eqref{eq:bidual-eq} is equal to the optimal value of problem~\eqref{eq:constrained-equi}. Let $(w^\ast, (\alpha^\ast, \beta^\ast)) \in \Spc{M}(\R) \times \R^2$ be a solution to problem~\eqref{eq:constrained-equi}. Then, since the latter has a finite optimal cost, we must have $-\nuu(w^\ast)+\obs = \alpha^\ast \V 1 + \beta^\ast \V x \in V$, which implies that $-w^\ast$ reaches a cost in problem~\eqref{eq:bidual} that is lower or equal than the optimal cost of the primal problem problem~\eqref{eq:constrained-equi}. This observation together with the weak duality property applied twice to ~\eqref{eq:constrained-equi} (since~\eqref{eq:bidual} is the bidual of ~\eqref{eq:constrained-equi}) implies that problems~\eqref{eq:dual-equi-sd} and~\eqref{eq:bidual} share the same optimal cost. This proves that strong duality holds between problems~\eqref{eq:constrained-equi} and~\eqref{eq:constrained-dual}, which concludes the proof.
%
\paragraph{Optimality conditions}To derive the optimality conditions given in~\eqref{eq:ic} and~\eqref{eq:oc}, we apply~\cite[Proposition~2.4, Chapter~3]{Ekeland1976convex}. We have already proved that strong duality holds, and that the primal problem~\eqref{eq:constrained-equi} has at least one solution. To apply the proposition, it remains to prove that the dual problem~\eqref{eq:constrained-dual} also has at least one solution. This holds true due to the following
\begin{itemize}
    \item the objective function of problem~\eqref{eq:constrained-dual} is a continuous linear form over the convex set $\Cc$,
    \item the convex set $\Cc=V^\perp\cap\Dd \subset \R^M$ is compact as the intersection of the closed set $V^\perp$ and the compact set $\Dd\eqdef\{\dualvar\in\RR^M: \normi{\nuu^*(\dualvar)}\leq1\}$. The main argument to prove the compactness of $\Dd$ is that $\mbox{Im}(\nuu^*)\subset\Co$ is finite dimensional. Let us prove it in a formal way. Consider the map $F:\RR^M\to\Ff$ given by
    \eq{
        \forall \dualvar\in\RR^M, \quad F(\dualvar)\eqdef\sum_{m=1}^M \dualvarm g(x_m,\cdot)=\nuu^*(\dualvar)
    }
  (using ~\eqref{eq:nuu-star} for the last equality), where $\Ff\eqdef \Span\pa{\{g(x_m,\cdot): 1\leq m\leq M\}}$. Then, $F$ is
    \begin{itemize}
        \item linear;
        \item injective and thus bijective due to the linear independence of the family $(g(x_m,\cdot))_{1\leq m\leq M}$. This independence can be proved by considering that $(g(x_m,\cdot))_{1\leq m\leq M}$ is a family of piecewise-linear splines with each finitely many knots, and so there exists a nonempty interval $I$ in which all the $g(x_m,\cdot)$ are linear functions;
        \item continuous with $\Ff\subset\Co$ endowed with the uniform norm $\normi{\cdot}$.
    \end{itemize}
    Therefore, by the bounded inverse theorem, $F^{-1}$ is continuous. Moreover, note that $\Ee\eqdef\{f\in\Ff: \normi{f}\leq 1\}$ is bounded and closed, and is thus compact (since $\Ff=\mbox{Im}(\nuu^*)$ is finite dimensional). This proves that $\Dd=F^{-1}(\Ee)$ is compact.
    %\item the convex set $\Cc=V^\perp\cap\{\dualvar\in\RR^M: \normi{\nuu^*(\dualvar)}\leq1\} \subset \R^M$ is compact as the intersection of the closed set $V^\perp$ and the compact set $\{\dualvar\in\RR^M: \normi{\nuu^*(\dualvar)}\leq1\}$. The latter is indeed closed since it is convex and weak-* closed as the pre-image of the weak-* compact unit ball of $\Co$ with the weak-* continuous mapping $\nuu^*:\RR^M\to\Co$. It is moreover bounded, otherwise one could construct a sequence $(\dualvar^n)_{n\in \N}$ of vectors in $\R^M$ going to infinity with $\lVert \nuu^*(\dualvar^n) \rVert_\infty = \lvert \sum_{m=1}^M c^n_m g(x_m,\cdot) \rVert_\infty \leq 1$, which is impossible by simple support considerations. Being closed and bounded, $\{\dualvar\in\RR^M: \normi{\nuu^*(\dualvar)}\leq1\}$ is compact as expected.
    
%    The last one is compact because it is closed since weak-* closed (due to the weak-* continuity of $\nuu^*:\RR^M\to\Co$) and convex, and bounded since otherwise there would be a sequence in $\RR^M$ going to infinity which would contradict the condition $\normi{\nuu^*(\dualvar)}\leq1$ because $\nuu^*(\dualvar)=\sum_{m=1}^M c_m g(x_m,\cdot)$ (where $g$ is defined in~\eqref{eq:kernelI2}),
\end{itemize}
The convexity and the compactness of $\Cc$ imply that  there is at least one extreme point of $\Cc$ that is a solution of problem~\eqref{eq:constrained-dual}. 
%the objective function of problem~\eqref{eq:constrained-dual} is a continuous linear form over the convex set $\Cc$. Moreover, the convex set $\Cc$ is compact since the set $\{\dualvar\in\RR^M: \normi{\nuu^*(\dualvar)}\leq1\}$ is weak-* closed (due to the weak-* continuity of $\nuu^*:\RR^M\to\Co$), which implies that it is closed and it is bounded since $\nuu^*(\dualvar)=\sum_{m=1}^M c_m g(x_m,\cdot)$ (where $g$ is defined in~\eqref{eq:kernelI2}).
Hence, the assumptions of~\cite[Proposition~2.4, Chapter~3]{Ekeland1976convex} are satisfied, which implies that any solution $(w,(\beta_0, \beta_1))\in\Radon\times\RR^2$ of (the primal) problem~\eqref{eq:constrained-equi} and $\dualvar\in\RR^M$ of (the dual) problem~\eqref{eq:constrained-dual} are linked by the optimality conditions
\begin{align}
    &\nuu(w) + \beta_0 \un + \beta_1 \x = \obs,\\
    &\dotp{\dualvar}{\un} = \dotp{\dualvar}{\x} = 0, \quad \mnorm{w} = \dotp{w}{\nuu^*(\dualvar)} \qandq \normi{\nuu^*(\dualvar)}\leq1.
\end{align}
Conversely, if any $(w,(\beta_0, \beta_1))\in\Radon\times\RR^2$ and $\dualvar\in\RR^M$ satisfy the optimality conditions given above, then again by~\cite[Proposition~2.4, Chapter~3]{Ekeland1976convex} we obtain that $(w,(\beta_0, \beta_1)) \in \Radon \times \RR^2$ and $\dualvar\in\RR^M$ are solutions of the primal and dual problems respectively. This proves the last statement of the lemma.
\end{proof}

The last intermediate result needed for the proof of Proposition~\ref{prop:cns-sol-constrained} is given in the next lemma, where we prove that any continuous function $\nuu^*(\dualvar)\in\Co$ with $\dualvar\in\RR^M$ satisfying the orthogonality conditions given in~\eqref{eq:oc} is a piecewise-linear spline whose knots are located at the sampling points $\x=(x_m)_{1\leq m\leq M}$.

\begin{lemma}\label{lem:struct-certif}
Let $\dualvar\in\RR^M$ such that $\dotp{\dualvar}{\un} = \dotp{\dualvar}{\x} = 0$. Then, we have
$\nuu^*(\dualvar) = \sum_{m=1}^M \dualvarm \green{x_m - \cdot}$.
\end{lemma}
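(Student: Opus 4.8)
The plan is to expand $\nuu^*(\dualvar)$ via the closed form of the kernel $g$ from Proposition~\ref{prop:Itwo} and to let the two orthogonality constraints $\dotp{\dualvar}{\un}=\dotp{\dualvar}{\x}=0$ absorb the spurious terms. By~\eqref{eq:nuu-star}, $\nuu^*(\dualvar)=\sum_{m=1}^M \dualvarm\, g(x_m,\cdot)$, and by~\eqref{eq:kernelI2},
\begin{equation}
	g(x_m,y) = \green{x_m - y} - \green{-y} + x_m\bigl(\green{-y} - \green{1-y}\bigr).
\end{equation}
First I would substitute this into the finite sum and collect the coefficients of $\green{-y}$ and $\green{1-y}$.

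Carrying out the (routine) regrouping, and using that $\sum_{m=1}^M \dualvarm = \dotp{\dualvar}{\un}$ and $\sum_{m=1}^M \dualvarm x_m = \dotp{\dualvar}{\x}$, one obtains
\begin{equation}
	\nuu^*(\dualvar)(y) = \sum_{m=1}^M \dualvarm \green{x_m - y} - \dotp{\dualvar}{\un}\,\green{-y} + \dotp{\dualvar}{\x}\,\bigl(\green{-y} - \green{1-y}\bigr).
\end{equation}
The hypotheses $\dotp{\dualvar}{\un} = \dotp{\dualvar}{\x} = 0$ then cancel the last two terms, leaving exactly $\nuu^*(\dualvar) = \sum_{m=1}^M \dualvarm\green{x_m - \cdot}$, which is the claimed identity. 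Interchanging the finite sum over $m$ with this grouping of terms is unproblematic.

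There is essentially no obstacle here: once the explicit expression for $g$ given by Proposition~\ref{prop:Itwo} is plugged in, the statement reduces to a one-line algebraic identity. The only point worth highlighting is that it is precisely the two moment conditions on $\dualvar$ that collapse the somewhat opaque expression $\nuu^*(\dualvar)$ to the clean truncated-power form $\sum_m \dualvarm \green{x_m - \cdot}$, which in turn exhibits $\nuu^*(\dualvar)$ as a piecewise-linear spline whose knots are contained in the sampling set $\{x_1,\ldots,x_M\}$, as announced before the lemma.
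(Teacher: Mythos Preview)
Your proof is correct and follows essentially the same approach as the paper: expand $\nuu^*(\dualvar)=\sum_m \dualvarm g(x_m,\cdot)$ using the explicit kernel~\eqref{eq:kernelI2}, regroup the terms in $(-y)_+$ and $(1-y)_+$ to expose the factors $\dotp{\dualvar}{\un}$ and $\dotp{\dualvar}{\x}$, and let the orthogonality hypotheses kill them. The paper's proof is literally the same three-line computation.
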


\begin{proof}
We know by~\eqref{eq:nuu-star} and~\eqref{eq:kernelI2} that
\begin{align}
	\nuu^*(\dualvar) &= \dotp{\dualvar}{\pa{g(x_m,\cdot)}_{1\leq m\leq M}}, \\
	                 &= \dotp{\dualvar}{\pa{\green{x_m-x} - (-x)_+ + x_m((-x)_+ - (1 - x)_+)}_{1\leq m\leq M}}, \\
	                 &= \dotp{\dualvar}{\pa{\green{x_m-x}}_{1\leq m\leq M}} - (-x)_+ \underbrace{\dotp{\dualvar}{\un}}_{=0} + ((-x)_+ - (1 - x)_+)\underbrace{\dotp{\dualvar}{\x}}_{=0},
\end{align}
which proves that $\nuu^*(\dualvar) = \sum_{m=1}^M \dualvarm \green{x_m - \cdot}$.
\end{proof}

We can now prove Proposition~\ref{prop:cns-sol-constrained}.

%\begin{proofof}
\begin{proof}{Proposition~\ref{prop:cns-sol-constrained}}
Suppose that $\fopt\in\BV$ is a solution of~\eqref{eq:noiseless}. Then, $\fopt$ satisfies the interpolation conditions $\fopt(x_m)=y_{0,m}$ for all $m\in\{1,\ldots,M\}$, and $(w,(\beta_0, \beta_1)) \in \Radon \times \RR^2$ is a solution of problem~\eqref{eq:constrained-equi} where $\fopt=\Itwo\{w\} + \beta_0 + \beta_1 (\cdot)$. By Lemma~\ref{lem:optimality-certificate}, there exists a $\V c\in\RR^M$ solution of problem~\eqref{eq:constrained-dual} which then satisfies $\dotp{\dualvar}{\un} = \dotp{\dualvar}{\x} = 0$ with $\normi{\nuu^*(\dualvar)}\leq1$. Let us denote $\eta \eqdef \nuu^*(\dualvar)\in\Co$. By Lemma~\ref{lem:struct-certif}, we have $\eta = \sum_{m=1}^M \dualvarm \green{x_m - \cdot}$ \ie $\eta$ is a dual pre-certificate (Definition~\ref{def:dual-certif}). Moreover, again by Lemma~\ref{lem:optimality-certificate}, we know that $\mnorm{w} = \dotp{w}{\eta}$ which gives the direct implication.

For the reverse implication, the dual pre-certificate $\eta$ given by the statement satisfies $\eta=\nuu^*(\dualvar)$ by Lemma~\ref{lem:struct-certif}, and since $\fopt$ satisfies the interpolation conditions, we deduce that $\nuu(w) + \beta_0 \un + \beta_1 \x = \obs$ where $\beta_0$ and $\beta_1$ are defined thanks to the relation $\fopt=\Itwo\{w\} + \beta_0 + \beta_1 (\cdot)$. Hence, by Lemma~\ref{lem:optimality-certificate}, $(w,(\beta_0, \beta_1))\in\Radon\times\RR^2$ is a solution of problem~\eqref{eq:constrained-equi} (and $\V c$ is a solution of problem~\eqref{eq:constrained-dual}), \ie $\fopt$ is a solution of ~\eqref{eq:noiseless}.

Let us now prove that the relation $\mnorm{w} = \dotp{w}{\eta}$ is equivalent to $\ssupp w \subset \ssat \eta$ when $\eta$ is a dual pre-certificate (see Definition~\ref{def:supp-sat} for the definition of the signed support and signed saturation set). First, we have that $\mnorm{w} = \mnorm{w_{|\satp{\eta}}} + \mnorm{w_{|\satn{\eta}}} + \mnorm{w_{|S^c}}$ (see~\cite[Theorem~6.2]{rudin1986real}), where $S\eqdef\satp\eta\cup\satn\eta$, hence
\begin{align}
	\pa{\mnorm{w_{|\satp{\eta}}} - \dotp{w_{|\satp{\eta}}}{\eta}} + \pa{\mnorm{w_{|\satn{\eta}}} - \dotp{w_{|\satn{\eta}}}{\eta}} + \pa{\mnorm{w_{|S^c}} - \dotp{w_{|S^c}}{\eta}} = 0.
\end{align}
Each of the three terms in the sum is nonnegative by definition of $\mnorm{\cdot}$, and the fact that $\normi{\eta}\leq 1$, so that the equality $\mnorm{w} = \dotp{w}{\eta}$ is equivalent to
\begin{align}
	&\mnorm{w_{|\satp{\eta}}} = \dotp{w_{|\satp{\eta}}}{\eta},\label{eq:mnorm-plus}\\
	&\mnorm{w_{|\satn{\eta}}} = \dotp{w_{|\satn{\eta}}}{\eta},\label{eq:mnorm-neg}\\
	&\mnorm{w_{|S^c}} = \dotp{w_{|S^c}}{\eta}\label{eq:mnorm-comp}.
\end{align}
Consider the Jordan decomposition of $w$: $w=w_+ - w_-$. Then $\mnorm{w_{|\satp{\eta}}}=w_+\pa{\satp\eta}+w_-\pa{\satp\eta}$ and $\dotp{w_{|\satp{\eta}}}{\eta}=\int_{\satp\eta}\d w=w_+\pa{\satp\eta}-w_-\pa{\satp\eta}$, so that~\eqref{eq:mnorm-plus} is equivalent to $w_-\pa{\satp\eta} = 0$ \ie
\eq{
	\supp(w_-)\cap\satp\eta=\varnothing.
}
Similarly, we can prove that~\eqref{eq:mnorm-neg} is equivalent to
\eq{
	\supp(w_+)\cap\satn\eta=\varnothing,
}
since $\dotp{w_{|\satn{\eta}}}{\eta}=-\int_{\satn\eta}\d w$. As a result, to obtain the desired equivalence, it remains to prove that~\eqref{eq:mnorm-comp} is the same as $w_{|S^c}=0$. The arguments can be found for example in~\cite{de2012exact} (see the proof of Lemma A.1), but we reproduce the reasoning here for the sake of completeness. Consider the closed sets for all $k>0$
\eq{
	\Omega_k \eqdef \RR \setminus \pa{ S \ + \ \left( -\frac{1}{k},\frac{1}{k} \right) }\subset S^c.
}
Suppose by contradiction that there exists $k>0$ such that $\mnorm{w_{|\Omega_k}}>0$. Since $|\eta|<1$ on the closed set $\Omega_k$ (because it is true on the bigger open set $S^c$), we deduce that $\dotp{w_{|\Omega_k}}{\eta}<\mnorm{w_{|\Omega_k}}$ and then
\eq{
	\mnorm{w} = \dotp{w_{|\Omega_k}}{\eta} + \dotp{w_{|\Omega_k^c}}{\eta} < \mnorm{w_{|\Omega_k}} + \mnorm{w_{|\Omega_k^c}} = \mnorm{w},
}
which is a contradiction. Hence, we have $\mnorm{w_{|\Omega_k}}=0$ for all $k>0$, which yields $\mnorm{w_{|S^c}}=0$ since $S^c = \cup_{k>0} \Omega_k$, \ie $w_{|S^c}=0$.
\end{proof}

\section{Proof of Proposition~\ref{prop:cns-sol-constrained-fixed-dual-certif}}\label{app:2bis} 
 
The proof of Proposition~\ref{prop:cns-sol-constrained-fixed-dual-certif} is very similar to the proof of Proposition~\ref{prop:cns-sol-constrained}, and is derived from the optimality conditions given in Lemma~\ref{lem:optimality-certificate}.

%\begin{proofof}
\begin{proof}{Proposition~\ref{prop:cns-sol-constrained-fixed-dual-certif}}

Let $\eta$ be a dual certificate in the sense of Proposition~\ref{prop:cns-sol-constrained-fixed-dual-certif}. By definition of $\eta$ (it is in particular a dual pre-certificate in the sense of Definition~\ref{def:dual-certif}) and by Lemma~\ref{lem:struct-certif}, there exists $\V c\in\RR^M$ such that $\eta=\nuu^*(\dualvar)$ and $\dotp{\dualvar}{\un} = \dotp{\dualvar}{\x} = 0$. Since $\eta$ is a dual certificate, Proposition~\ref{prop:cns-sol-constrained} implies that there exists a $\tilde f\in\BV$ satisfying the interpolation conditions and such that $\mnorm{\Op D^2 \tilde f}=\dotp{\Op D^2 \tilde f}{\eta}$. This implies that $\V c$ and $(\tilde w, (\tilde \beta_0, \tilde \beta_1))\in\Radon\times\RR^2$, where $\tilde f=\Itwo\{\tilde w\}+\tilde \beta_0 + \tilde \beta_1 (\cdot)$, satisfy~\eqref{eq:ic} and~\eqref{eq:oc} \ie in particular $\V c$ is a solution of the dual problem~\eqref{eq:constrained-dual} by Lemma~\ref{lem:optimality-certificate}. Using this fixed vector $\V c\in\RR^M$ and the decomposition of any $f\in\BV$ as $f=\Itwo\{w\} + \beta_0 + \beta_1 (\cdot)$ (see~\eqref{eq:fdecompose}), the equivalence in Lemma~\ref{lem:optimality-certificate} directly yields that $\fopt$ is a solution of~\eqref{eq:noiseless} if and only if $\fopt$ satisfies the interpolation conditions $\fopt(x_m) = y_{0,m}$ and $\mnorm{\Op{D}^2 \fopt} = \dotp{\Op{D}^2 \fopt}{\eta}$, which concludes the proof.
\end{proof}

\section{Proof of Proposition~\ref{prop:general-uniqueness}}\label{app:3}

Let $\fopt\in\BV$ be a solution of problem~\eqref{eq:noiseless} given by Theorem~\ref{theo:RTweneed}. By~\eqref{eq:fdecompose}, there exist $w\in\Radon$ and $(\beta_0, \beta_1) \in \RR^2$ such that $\fopt=\Itwo\{w\} + \beta_0 + \beta_1 (\cdot)$. By the assumption of the proposition, there exists a nondegenerate dual certificate $\eta$, so that by applying Proposition~\ref{prop:cns-sol-constrained-fixed-dual-certif}, we obtain $\ssupp{w} \subset \ssat \eta$. Moreover, we have that $\ssat \eta \subset \{x_2,\ldots,x_{M-1}\}$ due to the two following facts
\begin{itemize}
    \item $\eta = \sum_{m=1}^M \dualvarm \green{x_m - \cdot}$ (as a dual pre-certificate, see Lemma~\ref{lem:struct-certif}),
    \item $\ssat \eta$ is a discrete set (as $\eta$ is nondegenerate).
\end{itemize}
This implies that $\eta$ must be equal to $\pm 1$ at the points $\{x_2,\ldots,x_{M-1}\}$, which yields
\eq{
    w = \sum_{k=2}^{M-1} a_k\dirac{x_k},
}
where the $a_k \in \R$ are (possibly zero) weights. In particular, this implies that $\fopt$ is a piecewise-linear spline with at most $(M-2)$ knots that are a subset of $\{x_2,\ldots,x_{M-1}\}$. It remains to prove that the coefficients $a_2,\ldots,a_{M-1}, \beta_0, \beta_1$ are uniquely determined to conclude that $\fopt$ is the unique solution of~\eqref{eq:noiseless}.

Since $\fopt$ is a solution of~\eqref{eq:noiseless}, we have that $\nuf(\fopt) = \obs$. This implies that
\eq{
    \sum_{k=2}^{M-1} a_k \mathbf{g}_k + \beta_0 \un + \beta_1 \x = \obs \qwithq \mathbf{g}_k\eqdef \nuu\pa{\dirac{x_k}}=\pa{g(x_m,x_k)}_{1\leq m\leq M}\in\RR^M.
}
We now prove that this equation uniquely determines the coefficients $a_2,\ldots,a_{M-1},\beta_0, \beta_1$ by showing that the family $(\un,\x,\mathbf{g}_2,\ldots,\mathbf{g}_{M-1})$ is a basis of $\RR^M$. Indeed, by definition of $g$ (see~\eqref{eq:kernelI2}), we have that
\begin{align}\label{eq:gk}
    \forall k\in\{2,\ldots,M-1\}, \quad \V g_k = \pa{(x_m - x_k)_+}_{1\leq m \leq M} - (-x_k)_+\un + \pa{(-x_k)_+ - (1-x_k)_+}\x.
\end{align}
Hence, by writing the matrix of the family $(\un,\x,\mathbf{g}_2,\ldots,\mathbf{g}_{M-1})$ in the canonical basis of $\RR^M$, subtracting thanks to~\eqref{eq:gk} appropriate linear combinations of the first two columns (given by the vectors $\un$ and $\x$) to all of the other columns and finally subtracting $x_1$ times the first column to the second one, we end up with the following matrix
\eq{
    \begin{pmatrix}
    1 & 0 & 0 & 0 & \hdots & 0\\
    1 & (x_2 - x_1) & 0 & 0 & \hdots & 0\\
    1 & (x_3 - x_1) & (x_3 - x_2) & 0 & \hdots & 0\\
    \vdots & \vdots & \vdots & \vdots & \ddots & \vdots\\
    1 & (x_M - x_1) & (x_M - x_2) & (x_M - x_3) & \hdots & (x_M - x_{M-1})\\
    \end{pmatrix}.
}
The latter is a lower triangular matrix with nonzero coefficients on the diagonal (as the sampling points $x_m$ are pairwise distinct), and is thus invertible, which proves the desired result.

\section{Proof of Theorem~\ref{thm:limitdomain}}
\label{app:limitdomain}

Let $\fopt \in\mathcal{V}_0$. 
We fix $m \in \{2, M-2\}$. First of all, as we have seen in the proof of Theorem~\ref{theo:sol_set}, if $a_m a_{m+1} \leq 0$, then $\fopt = \fcano$ on $[x_m, x_{m+1}]$, and the graph of $\fopt$ in this interval is equal to the one of $\fcano$. Assume now that $a_m a_{m+1} > 0$. We now show that $\{ (x, \fopt(x)): x\in [x_m, x_{m+1} ] \} \subset \Delta_m$. The slope condition $a_m a_{m+1} > 0$ implies that $\etacano$ is degenerate and that $\etacano = \pm 1$ is constant over $[x_m, x_{m+1}]$. Assume for instance that the value is $1$, in which case $\fopt$ is convex over $[x_{m-1},x_{m+2}]$ according to Theorem~\ref{theo:sol_set}. 

We shall use the following well-known fact on convex functions. Fix $a < b < c$ and assume that $f$ is convex over $[a,c]$. Then, $f$ is below its arc between $a$ and $b$ on $(a,b)$, that is, $f(x) \leq \frac{f(b)-f(a)}{b-a} (x-a) + f(a)$ for any $x \in (a,b)$. Moreover, $f$ is above the same arc 
over $(b,c)$, that is,  $f(x) \geq \frac{f(b)-f(a)}{b-a} (x-a) + f(a)$ for any $x \in (b,c)$.

Let $x^* \in [x_m, x_{m+1}]$. By convexity, $\fopt$ is below its arc between $x_m$ and $x_{m+1}$. %whose equation is $ y = \frac{y_{0,m+1} - y_{0,m}}{x_{m+1} - x_m} (x- x_m) + y_{0,m}$. 
Hence we have that 
\begin{equation}\label{eq:condition1fordetalm}
    \fopt(x^*) \leq \frac{y_{0,m+1} - y_{0,m}}{x_{m+1} - x_m} (x^*- x_m) + y_{0,m}.
\end{equation}
Moreover, the convexity over $[x_{m-1}, x^*]$ implies that $\fopt(x^*)$ is above the arc of $\fopt$ between $x_{m-1}$ and $x_m$.
%, whose equation is $y = \frac{y_{0,m} - y_{0,m-1}}{x_{m} - x_{m-1}} (x - x_{m-1}) + y_{0,m-1}$.
This implies that 
\begin{equation}
    \label{eq:condition2fordetalm}
    \fopt(x^*) \geq \frac{y_{0,m} - y_{0,m-1}}{x_{m} - x_{m-1}} (x^* - x_{m-1}) + y_{0,m-1}.
\end{equation}
A similar argument over $[x^*, x_{m+2}]$ implies that
\begin{equation}
    \label{eq:condition3fordetalm}
    \fopt(x^*) \geq \frac{y_{0,m+2} - y_{0,m+1}}{x_{m+2} - x_{m+1}} (x^*- x_{m+1}) + y_{0,m+1}.
\end{equation}
The conditions~\eqref{eq:condition1fordetalm},~\eqref{eq:condition2fordetalm}, and~\eqref{eq:condition3fordetalm} are precisely equivalent to $(x^*,\fopt(x^*)) \in \Delta_m$, since the three linear equations delineate this domain in this case. The same proof applies when $\etacano=-1$ over $[x_m, x_{m+1}]$ by using concavity instead of convexity. This proves that $\mathcal{G}(\fopt) \subset \mathcal{G}(\fcano) \cup \left( \cup_{m \in \mathcal{X}}  \Delta_m \right)$ for every $\fopt \in \Spc{V}_0$, and hence the direct inclusion in~\eqref{eq:domain}.\\

For the reverse inclusion, we already know that $\fcano \in \mathcal{V}_0$, therefore it suffices to show that, for any $m \in \mathcal{X}$ and any $(x^*,y^*) \in \Delta_m$, there exists a solution $\fopt \in \mathcal{V}_0$ such that $\fopt (x^*) = y^*$. As before, since $m\in \mathcal{X}$, we know that $\etacano =\pm 1$ on $[x_m,x_{m+1}]$ and we can assume without loss of generality that the value is $1$. Then, any solution is convex and satisfies the relations~\eqref{eq:condition1fordetalm},~\eqref{eq:condition2fordetalm}, and~\eqref{eq:condition3fordetalm}. By convexity of $\mathcal{V}_0$, it suffices to show the result for $(x^*,y^*)$ in the boundary of $\Delta_m$, which is delimited by the relations
\begin{align}
    &\frac{y_{0,m+1} - y_{0,m}}{x_{m+1} - x_m} (x^*- x_m) + y_{0,m} = y^* \text{, or}  \label{eq:line1}\\ 
    &\frac{y_{0,m} - y_{0,m-1}}{x_{m} - x_{m-1}} (x^*- x_{m-1}) + y_{0,m-1} = y^* \text{, or} \label{eq:line2} \\
   & \frac{y_{0,m+2} - y_{0,m+1}}{x_{m+2} - x_{m+1}} (x^*- x_{m+1}) + y_{0,m+1} = y^*. \label{eq:line3}
\end{align}
The solution $\fcano$ is such that $\fcano(x^*) = \frac{y_{0,m+1} - y_{0,m}}{x_{m+1} - x_m} (x^*- x_m) + y_{0,m}  = y^*$, hence any $(x^* , y^*)$ satisfying~\eqref{eq:line1} is attained by a solution (the canonical one) in $\mathcal{V}_0$.
Assume that $(x^*,y^*)$ satisfies~\eqref{eq:line2} (the case of~\eqref{eq:line3} follows the same argument). We construct $\fopt$ as follows. First, $\fopt (x)= \fcano(x)$ for any $x \notin (x_m, x_{m+1})$. Then,  we set
\begin{equation}
\fopt(x) = \frac{y_{0,m} - y_{0,m-1}}{x_{m} - x_{m-1}} (x- x_{m-1}) + y_{0,m-1}
\end{equation} 
for $x \in (x_m, x^*]$.
In particular, $f(x^*) = y^*$, and $\fopt$ is linear on $[x_{m}, x^*]$. Finally,  we impose that
$\fopt$ is linear on $[x^*,x_{m+1}]$, which is equivalent to the relation
\begin{equation}
\fopt(x) = \frac{y_{0,m+1}-  y^*}{x_{m+1} - x^*} (x- x^*) + y^*   
\end{equation}
for any $x \in [x^*,x_{m+1}]$. We then claim that $\fopt \in \mathcal{V}_0$, the argument being very similar to the one of Lemma \ref{lem:mountain}. Indeed, to show this, it suffices to remark that $\fopt$, which is piecewise-constant and coincides with $\fcano$ outside of $(x_m,x_{m+1})$, is convex on $[x_{m-1}, x_{m+2}]$ (this is guaranteed by the slope condition $a_m a_{m+1} > 0$ and the construction of $\fopt$). According to Theorem~\ref{theo:sol_set}, this implies that $\fopt \in \mathcal{V}_0$, with $\fopt (x^*) = y^*$. This finally shows that $(x^*,y^*) \in \cup_{\fopt \in \mathcal{V}_0} \mathcal{G}(\fopt)$, which proves~\eqref{eq:domain}.

\section{Proof of Theorem~\ref{thm:sparsest_sol}}
\label{sec:sparsest_solutions_proof}

Using Theorem~\ref{theo:sol_set}, for any $\fopt \in \Spc{V}_0$, we have $\fopt (x) = \fcano(x)$ for any $x$ such that $\etacano(x) \neq \pm 1$. We now focus on regions where $\etacano(x) = \pm 1$. For all $n \in \{1, \ldots, N_s\}$, $\fcano$ has $\alpha_n + 1$ knots in the interval $[x_{s_n}, x_{s_n + \alpha_n}]$. In order to construct one of the sparsest solutions, we must therefore replace these $\alpha_n+1$ knots with as little knots as possible in each saturation region, since all solutions must coincide with $\fcano$ outside these regions. In order to lighten the notations, in what follows, we focus on a single saturation region determined by a fixed $n \in \{1, \ldots, N_s\}$ and we write $\alpha \eqdef \alpha_n$ and $s \eqdef s_n$.

Similarly to the proof of Proposition~\ref{prop:uniqueness}, a piecewise-linear spline $f$ that coincides with $\fcano$ outside the interval $[x_{s}, x_{s + \alpha}]$ must be of the form
\begin{align}
\label{eq:sparsest_sol}
    f(x) = \fcano(x) - \sum_{n'=0}^{\alpha} a_{s + n'} (x - x_{s + n'})_+ + \sum_{p=1}^{P} \tilde{a}_{p} (x - \tilde{\tau}_{p})_+,
\end{align}
where $\tilde{a}_{p}\in \R$, $\tilde{\tau}_p \in [x_{s}, x_{s + \alpha}]$ such that $\tilde{\tau}_1 < \cdots < \tilde{\tau}_P$ and $P$ is the number of knots of $f$ in this interval. We then prove the following lemma.
\begin{lemma}
\label{lem:min_sparsity}
If $f$ in~\eqref{eq:sparsest_sol} satisfies the constraints $f(x_m)=y_{0, m}$ for all $m\in\{1,\ldots,M\}$, then the number of knots $P$ in $[x_{s}, x_{s + \alpha}]$ satisfies $P \geq \lceil\frac{\alpha+1}{2} \rceil$.
\end{lemma}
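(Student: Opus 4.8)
The claim is a lower bound on the number of knots a solution can have inside one saturation region $[x_s, x_{s+\alpha}]$, where $\eta_{\mathrm{can}}$ is constantly $\pm 1$. Assume without loss of generality $\eta_{\mathrm{can}} = 1$ on that interval, so that by Theorem~\ref{theo:sol_set} any solution $f$ is convex on $[x_{s-1}, x_{s+\alpha+1}]$. The key structural fact is that $f$ is a piecewise-linear \emph{convex} function on this interval that must interpolate the $\alpha+2$ data points $\mathrm{P}_{0,s-1}, \mathrm{P}_{0,s}, \ldots, \mathrm{P}_{0,s+\alpha}, \mathrm{P}_{0,s+\alpha+1}$ (using that $f = \fcano$ outside, hence $f(x_{s-1}) = y_{0,s-1}$ and $f(x_{s+\alpha+1}) = y_{0,s+\alpha+1}$ as well). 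The slopes $s_m = \frac{y_{0,m+1}-y_{0,m}}{x_{m+1}-x_m}$ of the canonical interpolant on consecutive intervals must be strictly increasing across the whole window (this is exactly the condition $a_m > 0$ for $m = s, \ldots, s+\alpha$, which is what saturation with consecutive same-sign $a_m$ forces, together with $a_s a_{s-1}\le 0$ handled at the boundary — I would check the boundary indices carefully). So we have $\alpha+1$ distinct target slopes that any convex interpolant through these points must in some averaged sense realize.

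\textbf{Main argument.} The plan is a counting argument on linear pieces. A convex piecewise-linear function $f$ with $P$ knots in the open interval $(x_s, x_{s+\alpha})$ has at most $P+1$ linear pieces there, with strictly increasing slopes $\sigma_0 < \sigma_1 < \cdots < \sigma_P$. I would show that $f$ interpolating all $\alpha+1$ segments $[x_{s+n'}, x_{s+n'+1}]$, $n' = 0, \ldots, \alpha$, whose canonical secant slopes $s_{s+n'}$ are strictly increasing, forces enough slope changes. The precise mechanism: on each elementary interval $[x_{s+n'}, x_{s+n'+1}]$, convexity plus interpolation at both endpoints means $f$ lies below the chord, and the average slope of $f$ over that interval equals the chord slope $s_{s+n'}$; since $f$ is convex its slope at the left endpoint is $\le s_{s+n'} \le$ slope at the right endpoint. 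Now if $f$ had a single linear piece covering two consecutive elementary intervals $[x_{s+n'}, x_{s+n'+2}]$, its constant slope on that piece would have to equal both chord slopes $s_{s+n'}$ and $s_{s+n'+1}$ — impossible since these are distinct. Hence no linear piece of $f$ can span more than one elementary interval; equivalently, each of the $\alpha+1$ elementary intervals contains at least the portion of a distinct linear piece, or more carefully, between any two consecutive elementary intervals there must be at least a knot or a shared boundary knot. Counting: to cover $\alpha+1$ elementary intervals with pieces each confined to (at most) one elementary interval requires at least... here I must be slightly more careful, because a knot can sit exactly at an $x_{s+n'}$.

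\textbf{The delicate point.} The genuine obstacle is that the bound is $\lceil(\alpha+1)/2\rceil$, not $\alpha$ — so pieces \emph{can} span two elementary intervals, but only if a knot lies at the shared endpoint $x_{s+n'}$, and the interpolation point $\mathrm{P}_{0,s+n'}$ is used. The right framing: if $f$ is linear on $[x_{s+n'-1}, x_{s+n'+1}]$ with no knot in the open interval, it must still pass through $\mathrm{P}_{0,s+n'}$, but then its slope equals $s_{s+n'-1} = s_{s+n'}$, contradiction. So a knot is forced in \emph{each} closed elementary interval's interior OR the piece bends — meaning: between consecutive knots of $f$ there is at most one data point $x_{s+n'}$ with $s < s+n' < s+\alpha$ strictly inside, but actually at most... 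Let me restructure: let the knots be $\tilde\tau_1 < \cdots < \tilde\tau_P$ together with the endpoints $x_s, x_{s+\alpha}$, giving $P+1$ maximal linear pieces of $f$ on $[x_s, x_{s+\alpha}]$; each piece has a single slope, and two pieces sharing a data abscissa $x_{s+n'}$ as endpoint is allowed. The constraint ``no linear stretch of $f$ spans two full elementary intervals'' means each maximal linear piece contains \emph{at most one} of the interior nodes $x_{s+1}, \ldots, x_{s+\alpha-1}$ in its interior, and since there are $\alpha-1$ such interior nodes distributed among $P+1$ pieces with each piece holding $\le 1$, while also each piece's two endpoints... A cleaner bookkeeping: the $\alpha+1$ data abscissae $x_s,\dots,x_{s+\alpha}$ are interpolated; group them into maximal runs lying on a common linear piece; each such run has size $\le 2$ (a piece through three of them would need three equal consecutive chord slopes); hence the number of pieces is $\ge \lceil(\alpha+1)/2\rceil$, hence $P+1 \ge \lceil(\alpha+1)/2\rceil$... but we want $P \ge \lceil(\alpha+1)/2\rceil$, so I need one sharper observation — the two extreme pieces are forced to also match the outside slopes $s_{s-1}$ and $s_{s+\alpha}$, which are distinct from the interior ones, pinning down that the count is on knots not pieces. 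I would make this rigorous by the substitution in~\eqref{eq:sparsest_sol}: writing $f$ in the form $\fcano - \sum_{n'=0}^\alpha a_{s+n'}(x-x_{s+n'})_+ + \sum_{p=1}^P \tilde a_p (x-\tilde\tau_p)_+$, the $\alpha+1$ interpolation equations $f(x_{s+n'}) = y_{0,s+n'}$ (the endpoints $x_{s-1}, x_{s+\alpha+1}$ being automatic) become a linear system; a dimension/rank argument on the $(x-\tilde\tau_p)_+$ contributions shows that satisfying $\alpha+1$ independent conditions with a convex (monotone-slope) combination needs $P \ge \lceil(\alpha+1)/2\rceil$ parameters — I expect the cleanest writeup pairs consecutive interpolation constraints and shows each adjacent pair needs its own knot. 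That pairing step is where the ceiling arises and is the crux I would spend the most care on.

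$$P \geq \left\lceil \frac{\alpha+1}{2}\right\rceil.$$
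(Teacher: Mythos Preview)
Your core observation is exactly the paper's: if $f$ is linear on an interval containing three consecutive data points $x_{j-1},x_j,x_{j+1}$, those points would be collinear, forcing $a_j=0$ and contradicting $\etacano(x_j)=\pm 1$. This is precisely the content of the paper's claim~\eqref{eq:min_knots_central_sat}. So the approach is right.

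There are two issues, one cosmetic and one a genuine gap.

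\emph{Cosmetic.} The convexity detour via Theorem~\ref{theo:sol_set} is unnecessary and technically overshoots the lemma's hypotheses: the statement only assumes $f$ has the form~\eqref{eq:sparsest_sol} and interpolates, not that $f\in\mathcal{V}_0$. The paper's proof uses only non-collinearity (i.e.\ $a_{s},\ldots,a_{s+\alpha}\neq 0$), which already follows from being inside a saturation region. Drop convexity; you don't need it.

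\emph{The gap.} Your count stalls at $P+1\geq\lceil(\alpha+1)/2\rceil$ because you are grouping only the $\alpha+1$ abscissae $x_s,\ldots,x_{s+\alpha}$. But you yourself noted that $f=\fcano$ outside, so $f$ also interpolates $x_{s-1}$ and $x_{s+\alpha+1}$, and on $[x_{s-1},x_{s+\alpha+1}]$ all the knots of $f$ are still the $P$ points $\tilde\tau_1,\ldots,\tilde\tau_P$. Run your ``runs of size $\leq 2$'' argument on the full list of $\alpha+3$ data points: each of the $P+1$ linear pieces of $f$ on $[x_{s-1},x_{s+\alpha+1}]$ contains at most two of them (three would force some $a_j=0$ with $j\in\{s,\ldots,s+\alpha\}$), so $2(P+1)\geq \alpha+3$, giving exactly $P\geq\lceil(\alpha+1)/2\rceil$. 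That is the missing ``one sharper observation'' you were looking for. The paper does the equivalent thing by arguing separately that $\tilde\tau_1\in[x_s,x_{s+1})$ and $\tilde\tau_P\in(x_{s+\alpha-1},x_{s+\alpha}]$ before counting in the middle; your pigeonhole formulation is arguably cleaner once the two boundary points are included.
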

\begin{proof}
Lemma~\ref{lem:min_sparsity} is trivially true for $\alpha = 0$, since we must have $f = \fcano$ and thus $P=1$. Assume now that $\alpha >0$. Firstly, we show that we must have $\tilde{\tau}_1 \in [x_{s}, x_{s + 1})$. Assume by contradiction that $\tilde{\tau}_1 \geq x_{s+1}$: then, $f$ has no knots in the interval $(x_{s-1}, x_{s + 1})$. Yet $f$ must satisfy the interpolation constraints $f(x_m)=y_{0,m}$ for all $m\in\{1,\ldots,M\}$, which implies that the points $\P0{s-1}$, $\P0{s}$, and $\P0{s+1}$ are aligned. Therefore, $\fcano$ has a weight $a_s = 0$ (defined in~\eqref{eq:a_coefs}) which implies that $\etacano(x_s)=0$, which contradicts the assumption $\etacano(x_s)= \pm 1$. We can then prove in a similar fashion that $\tilde{\tau}_P \in (x_{s+ \alpha - 1}, x_{s + \alpha}]$ when $\alpha > 1$.

Next, we show that for $\alpha \geq 2$, we have
\begin{align}
\label{eq:min_knots_central_sat}
 \forall n' \in \{1 , \ldots, \alpha-1\},\ \exists p \in \{1, \ldots, P\} \text{ such that }  \tilde{\tau}_p \in (x_{s+n'-1}, x_{s+n'+1}),
\end{align}
\ie there must be a knot in all blocks of two consecutive saturation intervals. We assume by contradiction that this is not the case. Similarly to above, this implies that $\P0{s+n'-1}$, $\P0{s+n'}$, and $\P0{s+n'+1}$ are aligned and thus that $\etacano(x_{s+n'})=0$, which yields a contradiction.

Lemma~\ref{lem:min_sparsity} immediately follows from the constraints $\tilde{\tau}_1 \in [x_{s}, x_{s + 1})$ and $\tilde{\tau}_P \in [x_{s+ \alpha - 1}, x_{s + \alpha}]$ for $\alpha \leq 2$. For $\alpha >2$, by the two aforementioned constraints, $f$ must have at least two knots in the first and last saturation intervals $[x_{s}, x_{s + 1})$ and $(x_{s+ \alpha - 1} x_{s + \alpha}]$ respectively. Next, consider the interval $[x_{s+1}, x_{s +\alpha-1}]$, which consists of the central $\alpha-2$ consecutive saturations. Using~\eqref{eq:min_knots_central_sat}, this interval must contain at least $\lfloor \frac{\alpha-2}{2} \rfloor$ knots, which yields the lower bound $P \geq 2 + \lfloor \frac{\alpha-2}{2} \rfloor = \lceil\frac{\alpha+1}{2} \rceil$ (the last equality can easily be verified for every $\alpha \in \N$).
\end{proof}
The following Lemma then states that the bound in Lemma~\ref{lem:min_sparsity} is tight.
\begin{lemma}
\label{lem:construction_sparsest_sol}
The lower bound in Lemma~\ref{lem:min_sparsity} is always reached, \ie there exists a piecewise-linear spline $\fopt \in \Spc{V}_0$ of the form~\eqref{eq:sparsest_sol} with $P = \lceil\frac{\alpha+1}{2} \rceil$ knots in $[x_s, x_{s+\alpha}]$. If $\alpha$ is odd or $\alpha=0$, then $\fopt $ is unique. If $\alpha > 0$ is even, then there are uncountably many such functions $\fopt$.
\end{lemma}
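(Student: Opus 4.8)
Fix the $n$-th saturation run of $\etacano$ and abbreviate $\alpha := \alpha_n$, $s := s_n$; by symmetry (exchange ``convex''$\leftrightarrow$``concave'') we may assume $\etacano \equiv 1$ on $[x_s, x_{s+\alpha}]$. The plan is first to read off from Theorem~\ref{theo:sol_set} the local shape of an arbitrary $\fopt \in \mathcal{V}_0$: it equals the chord $L_{s-1}$ through $\mathrm{P}_{0,s-1}, \mathrm{P}_{0,s}$ on $[x_{s-1}, x_s]$, equals the chord $L_{s+\alpha}$ through $\mathrm{P}_{0,s+\alpha}, \mathrm{P}_{0,s+\alpha+1}$ on $[x_{s+\alpha}, x_{s+\alpha+1}]$, is convex on $[x_{s-1}, x_{s+\alpha+1}]$, and interpolates the $\alpha+3$ points $\mathrm{P}_{0,s-1}, \dots, \mathrm{P}_{0,s+\alpha+1}$. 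Since $a_s, \dots, a_{s+\alpha}$ are all $>0$ (their signs equal $\etacano(x_\cdot) = 1$), the consecutive slopes $\mu_j := (y_{0,j+1}-y_{0,j})/(x_{j+1}-x_j)$ obey $\mu_{s-1} < \mu_s < \dots < \mu_{s+\alpha}$; hence these $\alpha+3$ points are in strictly convex position and no affine function carries more than two of them. All knots of $\fopt$ inside the run lie in $(x_{s-1}, x_{s+\alpha+1})$, so the restriction of $\fopt$ there is a convex piecewise-linear function made of exactly $P+1$ affine pieces, where $P$ denotes the number of knots in $[x_s, x_{s+\alpha}]$.

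For existence I will give an explicit construction (the closed forms of the knots $\tilde\tau_p$ and weights $\tilde a_p$ used in Algorithm~\ref{alg:algo_constrained}). If $\alpha = 0$ the local shape already forces $\fopt = \fcano$, whose single knot is $x_s$, and $\lceil(\alpha+1)/2\rceil = 1$. If $\alpha \ge 1$ is odd, let $\fopt$ follow the chords $L_{s-1}, L_{s+1}, L_{s+3}, \dots, L_{s+\alpha}$ in this order, switching from $L_{s+2i-1}$ to $L_{s+2i+1}$ at their intersection point; Lemma~\ref{lem:mountain} applied with index $m = s+2i$ (valid since $a_{s+2i}a_{s+2i+1} > 0$ and $s+2i \in \{2,\dots,M-2\}$) places this intersection in $(x_{s+2i}, x_{s+2i+1})$, so the switch abscissae are increasing, each chord $L_{s+2i-1}$ carries the consecutive pair $\mathrm{P}_{0,s+2i-1}, \mathrm{P}_{0,s+2i}$ on the relevant subinterval, and the slopes $\mu_{s-1}<\mu_{s+1}<\dots<\mu_{s+\alpha}$ being increasing makes $\fopt$ convex on $[x_{s-1}, x_{s+\alpha+1}]$; outside the run we set $\fopt = \fcano$. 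By Theorem~\ref{theo:sol_set} this $\fopt$ lies in $\mathcal{V}_0$ and has exactly $\lceil(\alpha+1)/2\rceil$ knots in the run. If $\alpha = 2q > 0$ is even, I use the same recipe with the chord list $L_{s-1}, L_{s+1}, \dots, L_{s+2q-1}, L_{s+\alpha}$, performing the last switch (from $L_{s+2q-1}$ to $L_{s+\alpha}$) at $x_{s+\alpha}$ itself, which yields $q+1 = \lceil(\alpha+1)/2\rceil$ knots (one of them being $x_{s+\alpha}$); everything else is as before.

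Uniqueness (odd $\alpha$, or $\alpha = 0$): take any $\fopt\in\mathcal{V}_0$ with the minimal number $P = \lceil(\alpha+1)/2\rceil$ of knots in the run, so its restriction to $[x_{s-1}, x_{s+\alpha+1}]$ has $P+1$ affine pieces. Counting point--piece incidences, $\alpha + 3 \le \sum_{\text{pieces}} \#\{\text{interpolated points on it}\} \le 2(P+1)$; when $\alpha$ is odd one has $2(P+1) = \alpha+3$, so both inequalities are equalities: no interpolated point sits at a knot and each piece carries exactly two of them, necessarily two consecutive ones. The only partition of the consecutive index block $\{s-1, \dots, s+\alpha+1\}$ into $P+1$ consecutive pairs is $\{s-1,s\}, \{s+1,s+2\}, \dots, \{s+\alpha, s+\alpha+1\}$; this forces the pieces to be $L_{s-1}, L_{s+1}, \dots, L_{s+\alpha}$ and the knots to be their consecutive intersections, i.e.\ $\fopt$ is exactly the function constructed above. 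For $\alpha = 0$ the local shape gives $\fopt = \fcano$ directly. Multiplicity (even $\alpha = 2q > 0$): the incidence count now has slack $2(P+1) - (\alpha+3) = 1$, so exactly one piece carries a single interpolated point, a ``free'' one; concretely, fixing an interior index such as $i = s+1$ and, for $\theta$ in a nonempty open interval, letting $\fopt_\theta$ follow $L_{s-1}, M_\theta, L_{s+2}, L_{s+4}, \dots, L_{s+\alpha}$ (with $M_\theta$ the line through $\mathrm{P}_{0,s+1}$ of slope $\theta$ and switches at consecutive intersections) produces distinct elements of $\mathcal{V}_0$ with exactly $P$ knots: the intersections not involving $M_\theta$ are unchanged and remain in the correct subintervals, the two that do involve $M_\theta$ move continuously and, being governed by open conditions satisfied at a base value (e.g.\ $\theta$ approaching $\mu_{s+1}$), stay in the right subintervals on a neighborhood, and $\theta$ between $\mu_{s-1}$ and $\mu_{s+1}$ keeps the slope sequence increasing; distinct $\theta$ give distinct $\fopt_\theta$, hence uncountably many sparsest solutions.

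The main obstacle is the tightness step in the uniqueness argument: one must be certain that a minimal solution genuinely has only $P+1$ pieces on $[x_{s-1}, x_{s+\alpha+1}]$ and cannot secretly place a knot at some $x_{s+j}$ — this is exactly what the forced equality $\alpha+3 = \sum \#\{\cdots\} = 2(P+1)$ rules out, but it rests on having correctly pinned, via Theorem~\ref{theo:sol_set}, that $\fopt$ is affine up to and including $x_s$ and from $x_{s+\alpha}$ onward. A secondary point is checking that the set of admissible slopes $\theta$ in the even case has nonempty interior, which follows since the two intersection-location constraints are open and hold in the limit of the odd-type constructions.
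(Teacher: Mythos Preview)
Your proof is correct and its overall architecture matches the paper's: build the sparsest solution by following consecutive data chords and switching at their intersections, then argue (non)uniqueness. The constructions are essentially identical (the paper phrases the odd case as merging knot pairs into barycenters, which is the same as your chord-intersection recipe; your even-case base solution with a knot at $x_{s+\alpha}$ is the mirror of the paper's choice of a knot at $x_s$).

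Where you genuinely diverge is the uniqueness argument for odd $\alpha$. The paper invokes the knot-location constraints of Lemma~\ref{lem:min_sparsity} (a knot in the first interval, in the last interval, and in every block of two consecutive saturation intervals) and deduces that with only $P=\lceil(\alpha+1)/2\rceil$ knots the pattern $\tilde\tau_i\in(x_{s+2i},x_{s+2i+1})$ is forced, whence the knot-free intervals pin the solution to specific chords. Your route is a point--piece incidence count: strict convex position of the $\alpha+3$ data points caps each affine piece at two of them, so $\alpha+3\le 2(P+1)$ with equality when $\alpha$ is odd, which simultaneously rules out knots at data abscissae and forces the unique consecutive-pair partition of indices. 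This is cleaner and more self-contained (it re-derives the lower bound rather than importing it), at the cost of needing the strict-convex-position observation up front; the paper's version is more hands-on but leans on Lemma~\ref{lem:min_sparsity} as a black box.

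For the even case, your slope parameter $\theta$ of the free line $M_\theta$ through $\mathrm{P}_{0,s+1}$ is equivalent to the paper's position parameter $\widetilde{\mathrm P}_1$ on the segment $[\mathrm P_{0,s},\widetilde{\mathrm P}]$ (each determines the other). The paper is slightly more explicit about the admissible range, whereas you appeal to continuity and openness of the interval constraints; both are valid, but if you want to tighten your write-up you can simply note that for $\theta\in(\mu_s,\mu_{s+1})$ the two moving intersections lie in $(x_s,x_{s+1})$ and $(x_{s+1},x_{s+2})$ respectively (monotone in $\theta$, with the endpoints of that slope range giving the degenerate boundary cases), which makes the nonempty-interior claim immediate.
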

\begin{proof}
Lemma~\ref{lem:construction_sparsest_sol} is trivially true for $\alpha = 0$, \ie when no saturation occurs. Indeed, the saturation interval is then reduced to the point $\{ x_{s} \}$, and the only solution $\fopt \in \Spc{V}_0$ of the form~\eqref{eq:sparsest_sol} is $\fopt = \fcano$ for which $P=1$.

Assume now that $\alpha = 2k + 1$ is odd. The bound in Lemma~\ref{lem:min_sparsity} then reads $P\geq k+1$. Similarly to the proof of Proposition~\ref{prop:uniqueness}, we construct a function $\fopt$ of the form~\eqref{eq:sparsest_sol} with $P=k+1$ and
\begin{align}
\label{eq:sparsest_sol_odd}
\begin{cases}
      \tilde{a}_1 \eqdef a_{s} + a_{s+1} \text{ and } \tilde{\tau}_1 \eqdef \frac{a_{s}x_{s} + a_{s+1}x_{s+1}}{\tilde{a}_1}; \\
      \tilde{a}_2 \eqdef a_{s+2} + a_{s+3} \text{ and } \tilde{\tau}_2 \eqdef \frac{a_{s+2}x_{s+2} + a_{s+3}x_{s+3}}{\tilde{a}_2}; \\
      \vdots \\
      \tilde{a}_{k+1} \eqdef a_{s+2k} + a_{s+2k+1} \text{ and } \tilde{\tau}_k \eqdef \frac{a_{s+2k}x_{s+2k} + a_{s+2k+1}x_{s+2k+1}}{\tilde{a}_{k+1}}.
\end{cases}
\end{align}
Since the $a_s, \ldots, a_{s+\alpha}$ all have the same (nonzero) sign, the $\tilde{\tau}_{i}$, $i=1, \ldots , k+1$, are all barycenters with positive weights, which implies that $\tilde{\tau}_{i} \in (x_{s+2i}, x_{s+2i+1})$. Then, as in the proof of Proposition~\ref{prop:uniqueness}, replacing the knots at $x_{s+2i}$ and $x_{s+2i+1}$ in $\fcano$ by a single knot at $\tilde{\tau}_{i}$ does not change the expression of $\fopt $ outside the interval $(x_{s+2i}, x_{s+2i+1})$, which implies that all the constraints $\fopt(x_m) = y_{0, m}$ for all $m\in\{1,\ldots,M\}$ are satisfied.

Next, let $I_s = \{1, \ldots M\} \setminus \{s, \ldots, s+\alpha\}$ be the set of indices outside our interval of interest. Since $a_s, \ldots, a_{s+\alpha}$ and thus $\tilde{a}_1, \ldots, \tilde{a}_{k+1}$ all have the same sign, we have $\Vert \Op{D}^2 \fopt \Vert_{\Spc{M}} = \sum_{m \in I_s} \vert a_m \vert + \vert \sum_{i=1}^{k+1} \tilde{a}_{i} \vert= \sum_{m \in I_s} \vert a_m \vert + \vert\sum_{n=0}^{\alpha} a_{s+n}\vert = \Vert \Op{D}^2 \fcano \Vert_{\Spc{M}}$, which together with the interpolation constraints implies that $\fopt  \in \Spc{V}_0$.

To show the uniqueness, consider once again a function $\fopt $ of the form~\eqref{eq:sparsest_sol} with $P=k+1$ and $\tilde{\tau}_1 < \cdots < \tilde{\tau}_{k+1}$. We then invoke Lemma~\ref{lem:min_sparsity}, which stipulates that there must be knots in the first and last saturation intervals as well as every two consecutive saturation intervals. The only way to achieve this is to have $\tilde{\tau}_i \in (x_{s+2i}, x_{s+2i+1})$, $i=0, \ldots ,k$. The intervals $(x_{s+2i-1}, x_{s+2i})$ for all $i\in\{1, \ldots, k\}$ thus have no knots, which implies that in these intervals, $\fopt$ must follow the line $(\P0{s+2i-1}, \P0{s+2i})$. The knots are then necessarily the intersection of these lines, which yields the solution given in~\eqref{eq:sparsest_sol_odd}. The latter is therefore the unique function in $\Spc{V}_0$ with $P=k+1$ knots in the interval $[x_s, x_{s+\alpha}]$. An example of such a sparsest solution is shown in Figure~\ref{fig:3_sat} with $M=6$ and $\alpha = 3$ consective saturation intervals.

Assume now that $\alpha = 2k$ is even, with $k>0$. The bound in Lemma~\ref{lem:min_sparsity} then reads $P\geq k+1$. By Lemma~\ref{lem:mountain}, the intersection $\widetilde{\mathrm{P}} = \Point{\tilde{\tau}}{\tilde{y}}$ between the lines $(\P0{s-1}, \P0{s})$ and $(\P0{s+1}, \P0{s+2})$ exists and satisfies $\tilde{\tau} \in (x_s, x_{s+1})$. Then, let $\widetilde{\mathrm{P}}_1 = \Point{\tilde{\tau}_1}{\tilde{y}_1}$ be any point on the line segment $[\P0{s}, \widetilde{\mathrm{P}}]$, \ie with $\tilde{\tau}_1 \in [x_{s}, \tilde{\tau}]$. Then, we define $\widetilde{\mathrm{P}}_{2}$ as the intersection between the lines $(\widetilde{\mathrm{P}}_{1}, \P0{s+1})$ and $(\P0{s+2}, \P0{s+3})$. Similarly, if $\alpha \geq 4$, for every $i\in \{3, \ldots , k+1\}$, we define $\widetilde{\mathrm{P}}_{i} = \Point{\tilde{\tau}_i}{\tilde{y}_i}$ as the intersection between the lines $(\P0{s+2i-4}, \P0{s+2i-3})$ and $(\P0{s+2i-2}, \P0{s+2i-1})$. Due to a similar barycenter argument as in~\eqref{eq:sparsest_sol_odd}, these intersections are well defined and satisfy $\tilde{\tau}_{i} \in (x_{s+2i-3}, x_{s+2i-2})$. Let $\fopt$ be the piecewise-linear spline that coincides with $\fcano$ outside the interval $(x_{s}, x_{s+\alpha})$, and that connects the points $\P0{s-1}$, $\widetilde{\mathrm{P}}_1$, $\ldots$, $\widetilde{\mathrm{P}}_{k+1}$, and $\P0{s+\alpha}$ in that interval. By construction, $\fopt$ satisfies the constraints $\fopt(x_m) = y_{0, m}$, $m\in\{1,\ldots,M\}$. Moreover, once again in a similar manner to~\eqref{eq:sparsest_sol_odd}, we have that $\Vert \fopt \Vert_\Spc{M} = \Vert \fcano \Vert_\Spc{M}$, which implies that $\fopt \in \Spc{V}_0$. Finally, $\fopt$ is of the form~\eqref{eq:sparsest_sol} with the lowest possible sparsity $P = k+1$ in the interval $[x_s, x_{s+\alpha}]$ (by Lemma~\ref{lem:min_sparsity}). Yet there are uncountably many possible choices of $\widetilde{\mathrm{P}}_1$ (it can be any point on a non-singleton line segment). All of these choices lead to a different solution $\fopt \in \Spc{V}_0$ that is uniquely defined, since the choice of $\widetilde{\mathrm{P}}_1$ specifies $\widetilde{\mathrm{P}}_2, \ldots, \widetilde{\mathrm{P}}_{k+1}$. This proves that there are uncountably many solutions of the~\eqref{eq:noiselessclean} with sparsity $k+1$ in $[x_s, x_{s+\alpha}]$, and that there is a single degree of freedom for the choice of these $k+1$ knots. An example of such a sparsest solution is shown in Figure~\ref{fig:2_sat} with $M=5$ and $\alpha = 2$ consecutive saturation intervals. In our algorithm, we simply choose $\widetilde{\mathrm{P}}_1 = \P0{s}$, which yields a function $\fopt$ of the form~\eqref{eq:sparsest_sol} with
\begin{align}
\label{eq:sparsest_sol_even}
\begin{cases}
      \tilde{a}_1 \eqdef a_{s} \text{ and } \tilde{\tau}_1 \eqdef x_{s}; \\
      \tilde{a}_2 \eqdef a_{s+1} + a_{s+2} \text{ and } \tilde{\tau}_2 \eqdef \frac{a_{s+1}x_{s+1} + a_{s+2}x_{s+2}}{\tilde{a}_2}; \\
      \vdots \\
      \tilde{a}_{k+1} \eqdef a_{s+2k-1} + a_{s+2k} \text{ and } \tilde{\tau}_k \eqdef \frac{a_{s+2k-1}x_{s+2k-1} + a_{s+2k}x_{s+2k}}{\tilde{a}_{k+1}}.
\end{cases}
\end{align}

\end{proof}
Theorem~\ref{thm:sparsest_sol} then directly derives from Lemma~\ref{lem:construction_sparsest_sol} applied independently to each saturation interval $[x_{s_n}, x_{s_n + \alpha_n}]$ for $n\in\{1, \ldots, N_s\}$. Note that Lemma~\ref{lem:construction_sparsest_sol} also applies when no saturation occurs, \ie $\alpha_n = 0$. A sparsest solution of the~\eqref{eq:noiselessclean} thus coincides with a function of the form~\eqref{eq:sparsest_sol} constructed in Lemma~\ref{lem:construction_sparsest_sol} in each of these intervals, and with $\fcano$ outside these intervals. Finally, since the behavior of a solution in each saturation interval does not affect its behavior outside of it, the number of degrees of freedom in the set of sparsest solutions of the~\eqref{eq:noiselessclean} is simply the sum of the number of degrees of freedom in each saturation interval. Yet by Lemma~\ref{lem:construction_sparsest_sol}, there are no degrees of freedom in intervals such that $\alpha_n$ is odd (a sparsest solution is uniquely determined on that interval), and there is one when $\alpha_n$ is even. Therefore, the total number of degrees of freedom of the set of sparsest solutions of the~\eqref{eq:noiselessclean} is equal to the number of even values of $\alpha_n$ for $n\in\{1, \ldots, N_s\}$.

\section{Proof of Proposition~\ref{prop:penalized_to_constrained}}
\label{sec:penalized_to_constrained}

   Assume by contradiction that there exist $f_1, f_2 \in \Spc{V}_\lambda$ and $m_0 \in \{1, \ldots M\}$ such that $f_1(x_{m_0})\neq f_2(x_{m_0})$, and let $f_\gamma = \gamma f_1 + (1-\gamma) f_2$, where $0 < \gamma < 1$. We then have
   
   \begin{align}
       & \sum_{m=1}^M E(f_\gamma(x_m), y_{m}) + \lambda \Vert \Op{D}^2 f_\gamma \Vert_{\Spc{M}} \nonumber \\
       &< \gamma \sum_{m=1}^M E(f_1(x_m), y_{m}) + (1-\gamma) \sum_{m=1}^M E(f_2(x_m), y_{m}) + \lambda \Big(\gamma \Vert \Op{D}^2 f_1 \Vert_{\Spc{M}} + (1-\gamma) \Vert \Op{D}^2 f_2 \Vert_{\Spc{M}} \Big) \nonumber \\
       &=  \gamma \Spc{J}_\lambda + (1-\gamma) \Spc{J}_\lambda = \Spc{J}_\lambda,
   \end{align}
   where $\Spc{J}_\lambda$ is the optimal cost of the~\eqref{eq:noisyclean}. The inequality is due to the convexity of the $\Vert \cdot \Vert_\Spc{M}$ norm and of $E(\cdot, y)$ for any $y \in \R$. The fact that it is strict is due to the strict convexity of $E(\cdot, y_{m_0})$ and the fact that $f_1(x_{m_0})\neq f_2(x_{m_0})$. Yet since $\Spc{V}_\lambda$ is a convex set, we have $f_\gamma \in \Spc{V}_\lambda$: this implies that $\Spc{J}_\lambda = \sum_{m=1}^M E(f_\gamma(x_m), y_m) + \lambda \Vert \Op{D}^2 f_\gamma \Vert_{\Spc{M}} < \Spc{J}_\lambda $, which yields a contradiction.
   
   Therefore, there exists a unique vector $\V{y}_\lambda \in \R^M$ such that for any $\fopt \in \mathcal{V}_\lambda$, $\fopt(x_m) = y_{\lambda, m}$ for all $m\in\{1,\ldots,M\}$. This implies that $\Spc{V}_\lambda \subset \{f \in \BV: f(x_m) = y_{\lambda, m}, \ 1\leq m\leq M \}$. Moreover, we have that for any $\fopt \in \mathcal{V}_\lambda$, $E(\fopt(x_m), y_{m}) = E(y_{\lambda, m}, y_m)$, and thus that the data fidelity is constant in the constrained space $\{f \in \BV: f(x_m) = y_{\lambda, m}, \ 1\leq m\leq M \}$. This proves the equality between the solution sets of the~\eqref{eq:noisyclean} and~\eqref{eq:noisy_constrained}.

\section{ Proof of Proposition~\ref{prop:linear_regression}}
\label{sec:linear_regression_proof}
\paragraph{Item 1}Let $J(\beta_0, \beta_1) = \sum_{m=1}^M E(\beta_0 + \beta_1 x_m, y_m)$ be the objective function of problem~\eqref{eq:linear_regression}. We show that problem~\eqref{eq:linear_regression} indeed has a unique solution by proving that $J$ is strictly convex and coercive when $M \geq 2$ and the $x_m$ are pairwise distinct.

Concerning the coercivity, let $\Vert (\beta_0, \beta_1) \Vert_2 \to +\infty$. Assume by contradiction that $\beta_0 + \beta_1 x_m$ is bounded for every $m\in \{1, \ldots, M \}$. Then, since $M \geq 2$, $\beta_0 + \beta_1 x_1- (\beta_0 + \beta_1 x_2) = \beta_1(x_1-x_2)$ must also be bounded, which implies that $\beta_1$ is bounded since the $x_m$ are pairwise distinct. Therefore, we must have $\vert \beta_0 \vert \to +\infty$, which implies that $\vert \beta_0 + \beta_1 x_1 \vert \to +\infty$ which yields a contradiction. Therefore, there exists a $m_0 \in \{ 1, \ldots, M \}$ such that  $\vert \beta_0 + \beta_1 x_{m_0} \vert \to +\infty$. The coercivity of $J$ then directly follows from that of $E(\cdot, y_{m_0})$.

Next, to prove the strict convexity of $J$, let $(\beta_0, \beta_1), (\beta_0', \beta_1') \in \R^2$ with $(\beta_0, \beta_1) \neq (\beta_0', \beta_1')$, and $0 < s < 1$. For any $m$, we have $s \beta_0 + (1-s) \beta_0' + (s \beta_1 + (1-s) \beta_1') x_m = s (\beta_0 + \beta_1 x_m) + (1-s) (\beta_0' + \beta_1' x_m)$. Since $(\beta_0, \beta_1) \neq (\beta_0', \beta_1')$ and the $x_m$ are distinct, the equation $\beta_0 + \beta_1 x_m= \beta_0' + \beta_1' x_m$ can only be satisfied for at most a single $m \in \{1, \ldots, M \}$. Yet $M \geq 2$, which implies that $\exists m_0, \ \beta_0 + \beta_1 x_{m_0} \neq \beta_0' + \beta_1' x_{m_0}$. Therefore, due to the strict convexity of $E(\cdot, y_{m_0})$, we have  
\begin{align}
E((s \beta_0 + (1-s) \beta_0') + (s \beta_1 + (1-s) \beta_1')x_{m_0}), y_{m_0}) < s E(\beta_0 + \beta_1 x_{m_0}, y_{m_0}) + (1-s) E(\beta_0' + \beta_1' x_{m_0}, y_{m_0}).
\end{align}
It then follows from the convexity of $E(\cdot, y_{m})$ for all $m$ that $J \big( s (\beta_0, \beta_1) + (1-s) (\beta_0', \beta_1') \big) < s J(\beta_0, \beta_1) + (1-s) J(\beta_0', \beta_1')$, which proves the strict convexity of $J$. Together with the fact that $J$ is coercive, this proves that~\eqref{eq:linear_regression} has a unique solution.

\paragraph{Item 2}Assume that $\lambda \geq \lambda_{\text{max}}$. By Fermat's rule, a vector $\zopt$ is a solution of problem~\eqref{eq:optizlambda} if and only if the zero vector belongs to the subdifferential of the objective function evaluated at $\zopt$. We thus have $\zopt = \V{y}_\lambda$ if and only if
\begin{align}
\label{eq:optimality_condition_discrete}
    \V{0} \in \underbrace{\begin{pmatrix}\partial_1 E(z_{\mathrm{opt}, 1}, y_1) \\ \vdots \\ \partial_1 E(z_{\mathrm{opt}, M}, y_M) \end{pmatrix}}_{\eqdef \V{v}(\zopt)} + \lambda \partial  \Vert \M{L} \cdot \Vert_1(\zopt),
\end{align}
where $\partial_1$ denotes the partial derivative with respect to the first variable, and $\partial$ the subdifferential. The chain rule for subdifferentials \cite[Theorem 23.9.]{rockafellar1970convex} yields $\partial  \Vert \M{L} \cdot \Vert_1(\V{z}) = \{ \M{L}^T \V{g}: \V{g} \in \partial \Vert \cdot \Vert_1(\M{L} \V{z}) \subset \R^{M-2}  \}$, where $\partial \Vert \cdot \Vert_1(\V{a}) = \{ \V{g} \in \R^{M-2}: \Vert \V{g} \Vert_\infty \leq 1, \ \V{a}^T \V{g} = \Vert \V{a} \Vert_1 \}$. The vector $\M{L} \zopt$ lists the weights $a_m$ associated to the knots of the canonical solution $f_{\zopt}$ (see the proof of Proposition~\ref{prop:optizlambda}). Therefore, the linear regression case (in which $f_{\zopt}$ has no knot) corresponds to $\M{L} \zopt = \V{0}$. In this case, since $\partial \Vert \cdot \Vert_1(\V{0}) = \{ \V{g} \in \R^{M-2}: \Vert \V{g} \Vert_\infty \leq 1 \}$, the optimality condition~\eqref{eq:optimality_condition_discrete} now reads
\eq{
    \exists \V{g} \in \R^{M-2}, \quad \Vert \V{g} \Vert_\infty \leq 1, \quad \text{s.t.} \quad \V{v}(\zopt) + \lambda \M{L}^T \V{g} = \V{0}.
}
We now prove that $\zopt = \opt \beta_0 \V{1} + \opt \beta_1 \x $ satisfies the optimality conditions~\eqref{eq:optimality_condition_discrete}, and thus that $\V{y}_\lambda = \opt \beta_0 \V{1} + \opt \beta_1 \x$. To achieve this, we prove that $\V{g} = - \frac{1}{\lambda} {\M{L}^T}^\dagger \V{v}(\zopt)$ satisfies $\V{v}(\zopt) + \lambda \M{L}^T \V{g} = \V{0}$. Firstly, since $\lambda \geq \lambda_{\text{max}}$, we have that $\Vert \V{g} \Vert_\infty \leq 1$ by definition of $\lambda_{\text{max}}$. Next, let $V$ be the orthogonal complement of $\ker \M{L} \subset \R^M$. A known property of the pseudoinverse operator~\cite[Corollary 7]{benisrael2003generalized} is that $\M{L}^T {\M{L}^T}^\dagger$ is the orthogonal projection operator onto $V$. By decomposing $\V{v}(\zopt)= \V{v}_1 + \V{v}_2$, where $\V{v}_1 \in V$ and $\V{v}_2 \in \ker \M{L}$, we thus get $\V{v}(\zopt) + \lambda \M{L}^T \V{g} = \V{v}_2$. Yet $\ker \M{L} = \mathrm{span} \{ \V{1}, \x \}$, since the canonical solutions $f_{\V{1}}$ and $f_{\x}$ (that satisfy $f_{\V{1}}(x_m) = 1$ and $f_{\V{x}}(x_m) = x_m$ for every $m\in \{ 1, \ldots, M\}$ respectively) are linear functions that are thus not penalized by the regularization. The optimality conditions of problem~\eqref{eq:linear_regression} (\ie setting the gradient to zero) then yield $\V{v}(\zopt) \perp \ker \M{L}$, which implies that $\V{v}_2 = \V{0}$ and thus that $\V{v}(\zopt) + \lambda \M{L}^T \V{g} = \V{0}$. This proves that $\zopt$ satisfies the optimality condition of problem~\eqref{eq:optizlambda}, and thus that $\zopt = \V{y}_\lambda = \opt \beta_0 \V{1}+ \opt \beta_1 \x$.

\paragraph{Item 3} Due to item 2, we have $\V{y}_\lambda = \opt \beta_0 \V{1}+ \opt \beta_1 \x$ which implies that the points $\Point{x_m}{y_{\lambda, m}}$ are aligned. Hence, the canonical dual certificate of the constrained problem~\eqref{eq:noisy_constrained} is $\etacano = 0$, which is nondegenerate. By Proposition~\ref{prop:uniqueness}, this implies that the unique solution to problem~\eqref{eq:noisy_constrained} is the canonical solution $f_{\zopt} = f_{\text{max}} = \opt \beta_0 + \opt \beta_1 (\cdot)$. Due to the equivalence between problems~\eqref{eq:noisy_constrained} and the~\eqref{eq:noisyclean} proved in Proposition~\ref{prop:penalized_to_constrained}, this concludes the proof.

\section*{Acknowledgements}
The authors are thankful to Shayan Aziznejad for many discussions related to this work and for his elegant connection between the (\gBLASSO) problem and its discrete counterpart (see~\eqref{eq:optizlambda}). Julien Fageot was supported by the Swiss National Science Foundation (SNSF) under Grants P2ELP2\_181759 and P400P2\_194364. The work of Thomas Debarre, Quentin Denoyelle, and Michael Unser is supported by the SNSF under Grant 200020\_184646/1 and the European Research Council (ERC) under Grant 692726-GlobalBioIm.

%\section*{References}
\bibliography{ms}
\end{document}